\theoremstyle{definition}
\newtheorem{thm}{Theorem}[section]
\newtheorem{Def}[thm]{Definition}
\newtheorem{pro}[thm]{Proposition}
\newtheorem{cor}[thm]{Corollary}
\newtheorem{lem}[thm]{Lemma}
\newtheorem{rem}[thm]{Remark}
\newtheorem*{mainthm}{Theorem A}
\newtheorem*{mainthm2}{Theorem B}
\theoremstyle{definition}
\begin{document}
\title[$\mathcal{W}$-absorbing actions of 
finite groups]{$\mathcal{W}$-absorbing actions of 
finite groups on the Razak-Jacelon algebra}
\author{Norio Nawata}
\address{Department of Pure and Applied Mathematics, Graduate School of Information Science 
and Technology, Osaka University, Yamadaoka 1-5, Suita, Osaka 565-0871, Japan}
\email{nawata@ist.osaka-u.ac.jp}
\keywords{Razak-Jacelon algebra; $\mathcal{W}$-absorbing action; Characteristic invariant; 
Szab\'o's approximate cocycle intertwining argument; Kirchberg's central sequence C$^*$-algebra.}
\subjclass[2020]{Primary 46L55, Secondary 46L35; 46L40}
\thanks{This work was supported by JSPS KAKENHI Grant Number 20K03630}

\begin{abstract}
We say that a countable discrete group action $\alpha$ on a C$^*$-algebra $A$ 
is \textit{$\mathcal{W}$-absorbing} if there exist a C$^*$-algebra $B$ 
and an action $\beta$ on $B$ such that $\alpha$ is cocycle conjugate to  
$\beta\otimes \mathrm{id}_{\mathcal{W}}$ on $B\otimes \mathcal{W}$ where $\mathcal{W}$ is 
the Razak-Jacelon algebra. In this paper, we completely classify outer 
$\mathcal{W}$-absorbing actions of finite groups  on $\mathcal{W}$ up to conjugacy and 
cocycle conjugacy. 
\end{abstract}
\maketitle

\section{Introduction}

Jones completely classified actions  of finite groups on the injective II$_1$ factor  up to conjugacy in 
\cite{Jones} (see also \cite{C3}). In this paper, we study a C$^*$-analog of this result. 

The Razak-Jacelon algebra $\mathcal{W}$ is the simple separable nuclear monotracial C$^*$-algebra 
that is $KK$-equivalent to $\{0\}$ (see \cite{EGLN}, \cite{J} and \cite{Na4}). 
We can regard $\mathcal{W}$ as a C$^*$-analog of the injective II$_1$ factor and a monotracial 
analog of the Cuntz algebra $\mathcal{O}_2$. 
There exist $K$-theoretical difficulties for studying group actions on C$^*$-algebras (see \cite{I} 
for details). Indeed, Barlak and Szab\'o showed that the UCT problem for separable nuclear 
C$^*$-algebras can be reduced to studying certain actions of finite cyclic groups on $\mathcal{W}$ 
in \cite{BS}. On the other hand, Gabe and Szab\'o classified outer actions of 
discrete amenable groups on Kirchberg algebras up to cocycle conjugacy by using equivariant 
$KK$-theory in \cite{GS}. 
It is an interesting open problem to generalize this classification to actions on ``classifiable'' stably 
finite C$^*$-algebras. As a first step, we shall study ``$\mathcal{W}$-absorbing'' actions on 
$\mathcal{W}$. 
We say that a countable discrete group action $\alpha$ on a C$^*$-algebra $A$ 
is \textit{$\mathcal{W}$-absorbing} if there exist a C$^*$-algebra $B$ 
and an action $\beta$ on $B$ such that $\alpha$ is cocycle conjugate to  
$\beta\otimes \mathrm{id}_{\mathcal{W}}$ on $B\otimes \mathcal{W}$. 
Since $\mathcal{W}$-absorbing actions have no $K$-theoretical obstructions, we can clarify 
difficulties due to stably finiteness by studying $\mathcal{W}$-absorbing actions. Note that 
there exist actions on $\mathcal{W}$ that are not $\mathcal{W}$-absorbing 
(as we can expected from the result above by Barlak and Szab\'o). Indeed, there exist uncountably many 
non-cocycle conjugate outer actions of $\mathbb{Z}_2$ on $\mathcal{W}$ that are not 
$\mathcal{W}$-absorbing by \cite[Example 5.6 and Remark 5.7]{Na0}. 

In this paper, we completely classify outer $\mathcal{W}$-absorbing actions of finite groups 
on $\mathcal{W}$ up to conjugacy and cocycle conjugacy. 
This classification can be regarded as a C$^*$-analog of Jones' classification. 
Every action $\alpha$ of a discrete group $\Gamma$ on $\mathcal{W}$ induces an action 
$\tilde{\alpha}$ of $\Gamma$ on the injective II$_1$ factor $\pi_{\tau_{\mathcal{W}}}(\mathcal{W})^{''}$. 
It is clear that if actions $\alpha$ and $\beta$ of $\Gamma$ on $\mathcal{W}$ are 
conjugate (resp. cocycle conjugate), then $\tilde{\alpha}$ and $\tilde{\beta}$ are conjugate 
(resp. cocycle conjugate). 
The invariant of Jones' classification for an action $\delta$ of a finite group $\Gamma$ on the injective II$_1$ 
factor is a triplet consists of the normal subgroup 
$N(\delta):=\{g\in \Gamma\; |\; \delta_g\; \text{is an inner automorphism.} \}$ of $\Gamma$, 
the characteristic invariant $\Lambda(\delta)$ and the inner invariant $i(\delta)$. 
In this paper, we show the following classification theorem. 

\begin{mainthm}
(Theorem \ref{thm:main-1} and Theorem \ref{thm:main-2}.) \ \\
Let $\alpha$ and $\beta$ be outer $\mathcal{W}$-absorbing actions of a finite group 
$\Gamma$ on 
$\mathcal{W}$. Then \ \\
(1) $\alpha$ is cocycle conjugate to $\beta$ if and only if 
$(N(\tilde{\alpha}), \Lambda (\tilde{\alpha}))
=(N(\tilde{\beta}), \Lambda (\tilde{\beta}))$, \ \\
(2) $\alpha$ is conjugate to $\beta$ if and only if 
$(N(\tilde{\alpha}), \Lambda (\tilde{\alpha}), i(\tilde{\alpha}))
=(N(\tilde{\beta}), \Lambda (\tilde{\beta}), i(\tilde{\beta}))$. 
\end{mainthm}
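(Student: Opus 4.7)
The ``only if'' direction of both parts is immediate from the setup: a cocycle (resp.\ genuine) conjugacy between $\alpha$ and $\beta$ descends to a cocycle (resp.\ genuine) conjugacy between the induced actions $\tilde{\alpha}$ and $\tilde{\beta}$ on $\pi_{\tau_{\mathcal{W}}}(\mathcal{W})''$, which preserves $N$, $\Lambda$, and $i$ by Jones' classification \cite{Jones}. The substantive content is the converse. My strategy for both is to adapt Szab\'o's approximate cocycle intertwining argument (used by Gabe--Szab\'o \cite{GS} in the Kirchberg setting) to the stably finite monotracial setting, working throughout inside Kirchberg's central sequence C*-algebra $F(\mathcal{W})$ equipped with its induced $\Gamma$-action.

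For (1), I would first construct, for each pair $(N,\Lambda)$ arising from an outer action of $\Gamma$ on the injective II$_1$ factor, a \emph{model action} $\gamma^{(N,\Lambda)}$ of $\Gamma$ on $\mathcal{W}$ that is manifestly $\mathcal{W}$-absorbing and realises $(N,\Lambda)$. A natural candidate is to take a Jones-type model action on the hyperfinite II$_1$ factor $\mathcal{R}$ with the prescribed invariant and tensor with $\mathrm{id}_{\mathcal{W}}$, using the identification $\mathcal{R}\bar{\otimes}\mathcal{W}\cong \mathcal{W}$ that follows from the classification of $\mathcal{W}$ in \cite{EGLN,J,Na4}. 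The main task is then to show that any outer $\mathcal{W}$-absorbing $\alpha$ with the same invariant is cocycle conjugate to $\gamma^{(N,\Lambda)}$. For this, the plan is to produce equivariant unital $^*$-homomorphisms from the $\Lambda$-twisted group algebra of $N$ into the equivariant central sequence algebra $F_\alpha(\mathcal{W})$ (and likewise for the model), and to prove a uniqueness-up-to-unitary-cocycle lemma for such embeddings. Feeding these approximate intertwiners into Szab\'o's one-sided intertwining yields the desired cocycle conjugacy.

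For (2), once cocycle conjugacy is in hand, upgrading to genuine conjugacy reduces to a coboundary problem for the unitary cocycle implementing the equivalence; the inner invariant $i(\tilde{\alpha})$ is precisely the obstruction to this coboundary vanishing on $N$, so equality of inner invariants suffices to correct the cocycle by a coboundary and obtain an honest equivariant isomorphism. The main obstacle I anticipate is that $\mathcal{W}$ is stably finite: several key steps in the Kirchberg framework rely on infinite projections and halving, and these must be replaced by tracial and central-sequence arguments in $F(\mathcal{W})$ that still produce sufficiently many equivariant unitaries while keeping compatibility with the characteristic cocycle under control throughout the intertwining. A secondary but nontrivial technical point, to be settled before the reduction to the model is meaningful, is verifying that every admissible pair $(N,\Lambda)$ is in fact realised by some outer $\mathcal{W}$-absorbing action on $\mathcal{W}$.
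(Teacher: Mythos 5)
Your plan has the right general shape (models plus approximate intertwining for (1), a coboundary analysis for (2)), but as written it has two concrete gaps that the paper's proof is specifically organized to avoid.

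First, the model construction via ``$\mathcal{R}\bar{\otimes}\mathcal{W}\cong\mathcal{W}$'' does not make sense at the C$^*$-level: $\mathcal{R}$ is not norm-separable, so a Jones model on the injective II$_1$ factor cannot simply be tensored with $\mathrm{id}_{\mathcal{W}}$ to produce an action on $\mathcal{W}$. The paper instead builds the models $S^{(\Gamma,N,\Lambda)}$ as product-type actions on simple unital monotracial AF algebras (a twisted crossed product $M_{|\Gamma|^{\infty}}\rtimes_{\gamma|_N,\mu}N$ tensored with an auxiliary outer part from Lemma \ref{lem:key-model}), and only then tensors with $\mathcal{W}$. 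More seriously, your route to (1) --- equivariant embeddings of the $\Lambda$-twisted group algebra of $N$ into $F(\mathcal{W})^{\alpha}$ together with a uniqueness lemma --- requires lifting the von Neumann--level unitaries $V_h$ implementing $\tilde{\alpha}_h$ to the C$^*$-level with the twisted relations intact. This is exactly the step that fails in general for actions on $\mathcal{W}$ (it is where the non-$\mathcal{W}$-absorbing examples of \cite{Na0} live), and the paper never attempts it for nontrivial $\Lambda$: the approximate representability results of Section \ref{sec:app} and the existence/uniqueness theorems feeding Szab\'o's intertwining (Corollaries \ref{cor:uniqueness} and \ref{cor:existence}) are only established when the characteristic invariant is \emph{trivial}. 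The missing idea is the cancellation trick of Theorem \ref{thm:main-1}: tensor with the model carrying the \emph{inverse} invariant $\Lambda(\tilde{\alpha})^{-1}$, so that $\gamma:=S^{(\Gamma,N,\Lambda^{-1})}\otimes\beta\otimes\mathrm{id}_{\mathcal{W}}$ and $\gamma':=\alpha\otimes\mathrm{id}_{\mathcal{W}}\otimes S^{(\Gamma,N,\Lambda^{-1})}$ have trivial characteristic invariant, the identity $\alpha\otimes\mathrm{id}_{\mathcal{W}}\otimes\gamma=\gamma'\otimes\beta\otimes\mathrm{id}_{\mathcal{W}}$ holds on the nose, and two applications of the absorption theorem (Theorem \ref{thm;absorption}) give the chain from $\alpha$ to $\beta$. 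Without this reduction your intertwining argument has nothing to intertwine.

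Second, for (2) the inner invariant is not directly ``the obstruction to the cocycle from (1) being a coboundary.'' The paper must first pass to $\alpha\otimes\mathrm{Ad}(\rho)$ versus $\beta\otimes\mathrm{Ad}(\rho)$ (Jones' stabilization, converting cocycle conjugacy into genuine conjugacy), then correct the resulting conjugacy by an automorphism realizing an arbitrary character $\eta\in H^1(N(\tilde{\alpha}))^{\Gamma}$ so that $\tilde{\theta}(V_h\otimes\rho_h)=W_h\otimes\rho_h$ exactly; this correction uses the explicit structure of the model (Corollary \ref{cor:1.5.9} and Lemma \ref{lem:conjugacy}) and is not available for an abstract $\mathcal{W}$-absorbing action. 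Only after these two steps does the trace criterion of Proposition \ref{pro:coboundary} (which is where $i(\tilde{\alpha})=i(\tilde{\beta})$ enters, via the formula (2.2.1) for $\tau_p(e_u)$) show that the cocycle $u_g=\theta(1\otimes\rho_g^*)(1\otimes\rho_g)$ is a coboundary. You would also need Proposition \ref{pro:def-w-absorbing} to justify replacing $\alpha$ by $\alpha\otimes\mathrm{id}_{\mathbb{K}(\ell^2(\Gamma))}$ up to genuine (not merely cocycle) conjugacy; that proposition is itself an application of the coboundary criterion and is not automatic from $\mathcal{W}$-absorption.
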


The following rigidity type theorem is an immediate consequence of the theorem above. 

\begin{mainthm2}
(Corollary \ref{cor:main-1} and Corollary \ref{cor:main-2}.) \ \\
Let $\alpha$ and $\beta$ be outer $\mathcal{W}$-absorbing actions of a finite group 
$\Gamma$ on 
$\mathcal{W}$. Then \ \\
(1) $\alpha$ is cocycle conjugate to $\beta$ if and only if 
$\tilde{\alpha}$ is cocycle conjugate to $\tilde{\beta}$, \ \\
(2) $\alpha$ is conjugate to $\beta$ if and only if 
$\tilde{\alpha}$ is conjugate to $\tilde{\beta}$. 
\end{mainthm2}

A key ingredient in the proof of the classification theorem above is a kind of generalization 
(Theorem \ref{thm;absorption} or Corollary \ref{cor;absorption}) of equivariant Kirchberg-Phillips type 
absorption results for $\mathcal{W}$ in \cite{Na3} and \cite{Na5}. 
This generalization can be regarded as a monotracial analog (for the case of finite groups) 
of equivariant Kirchberg-Phillips type absorption  
theorems for $\mathcal{O}_2$ in \cite{I1}, \cite{Sza4} and \cite{Su}. (See also \cite{PS}.) 

This paper is organized as follows. In Section \ref{sec:pre}, we collect notations and some results. 
Furthermore, we study coboundaries of outer actions of finite groups on 
simple separable exact monotracial stably projectionless  $\mathcal{Z}$-stable  C$^*$-algebras. 
In Section \ref{sec:mod}, we construct outer actions with arbitrary invariants in the classification for 
outer $\mathcal{W}$-absorbing actions up to cocycle conjugacy and conjugacy. 
Note that we need model actions in the classification up to cocycle conjugacy for the proof of 
Theorem A. In Section \ref{sec:app}, we show that certain actions of finite group on $\mathcal{W}$ 
with trivial characteristic invariant  (in $\pi_{\tau_{\mathcal{W}}}(\mathcal{W})^{''}$) have a kind of 
approximate representability. This result enables us to show an existence type theorem in 
the next section. In Section \ref{sec:abs}, we show the absorption result explained above by 
using Szab\'o's approximate cocycle intertwining argument in \cite{Sza7} (see also \cite{Ell2} and 
\cite[Section 5]{Na5}). 
In section \ref{sec:cla}, we show the main classification theorem. Many arguments in the proof of 
the classification theorem are based on Jones' arguments in \cite{Jones}. 

\section{Preliminaries}\label{sec:pre}
\subsection{Finite group actions on monotracial C$^*$-algebras}

Let $T_1(A)$ denote the tracial state space of a C$^*$-algebra $A$. For any $\tau\in T_1(A)$, 
define $d_{\tau}(a):=\lim_{n\to\infty}\tau (a^{1/n})$ for any positive element $a$ in $A$. 
We say that a C$^*$-algebra $A$ is \textit{monotracial} if $T_1(A)=\{\tau_A\}$ and 
$A$ has no unbounded traces. (Let $\tau_A$ denote the unique tracial state on $A$ 
unless otherwise specified.) 
If $A$ is a simple infinite-dimensional monotracial C$^*$-algebra, then 
$\pi_{\tau_{A}}(A)^{''}$ is a factor of type II$_1$ where $\pi_{\tau_A}$ is the Gelfand-Naimark-Segal 
(GNS) representation of $\tau_A$. For $x\in \pi_{\tau_{A}}(A)^{''}$, define 
$\| x \|_2:= \tilde{\tau}_{A}(x^*x)^{1/2}$ where $\tilde{\tau}_A$ is the unique normal extension of 
$\tau_A$ on $\pi_{\tau_{A}}(A)^{''}$. 
Let $A^{\sim}$ and $M(A)$ denote the unitization algebra of $A$ and the multiplier algebra of $A$, 
respectively. Note that $A=A^{\sim}=M(A)$ if $A$ is unital. 
Let $U(M(A))$ denote the unitary group of $M(A)$. 
For any $u\in U(M(A))$, define an automorphism $\mathrm{Ad}(u)$ 
of $A$ by $\mathrm{Ad}(u)(x)=uxu^*$ for any $x\in A$. 

Let $\alpha$ and $\beta$ be actions of a discrete group $\Gamma$ on 
simple infinite-dimensional monotracial C$^*$-algebras $A$ and $B$, respectively. 
It is said to be that $\alpha$ is \textit{conjugate} to $\beta$ if there exists an isomorphism $\theta$ 
from $A$ onto $B$ such that $\theta \circ \alpha_g= \beta_g\circ \theta$ for any $g\in \Gamma$. 
A map $u$ from $\Gamma$ to $U(M(B))$ 
is said to be a \textit{$\beta$-cocycle} if $u_{gh}=u_g\beta (u_h)$ for any $g, h\in \Gamma$. 
We say that $\alpha$ is \textit{cocycle conjugate} to $\beta$ if there exist 
an isomorphism $\theta$ from $A$ onto $B$ and a $\beta$-cocycle $u$ such that 
$\mathrm{Ad}(u_g)\circ \beta_g \circ \theta= \theta\circ \alpha_g$ for any $g\in \Gamma$. 
Since $A$ and $B$ are monotracial, $\alpha$ and $\beta$ induce actions $\tilde{\alpha}$ and 
$\tilde{\beta}$ on $\pi_{\tau_{A}}(A)^{''}$ and $\pi_{\tau_{B}}(B)^{''}$, respectively. 
If $\alpha$ and $\beta$ are (resp. cocycle) conjugate, then $\tilde{\alpha}$ and $\tilde{\beta}$ 
are (resp. cocycle) conjugate. 
Put 
$$
N(\alpha):= \{g\in \Gamma\; |\; \alpha_g=\mathrm{Ad}(u) \text{ for some } u\in U(M(A))\}.
$$
Then $N(\alpha)$ is a normal subgroup of $\Gamma$. 
We say that $\alpha$ is \text{outer} if $N(\alpha)=\{\iota\}$ where $\iota$ is the identity element in 
$\Gamma$. 
If $\alpha$ and $\beta$ are cocycle conjugate, then $N(\alpha)=N(\beta)$ and  
$N(\tilde{\alpha})=N(\tilde{\beta})$. 
 
Assume that $\Gamma$ is a finite group. 
Let $A^{\alpha}$ and $A\rtimes_{\alpha}\Gamma$ denote the fixed point subalgebra and 
the (reduced) crossed product C$^*$-algebra, respectively. 
Let $E_{\alpha}$ be the canonical conditional expectation from $A\rtimes_{\alpha}\Gamma$
onto $A$ 
and 
$$
e_{\alpha}:= \frac{1}{|\Gamma|} \sum_{g\in \Gamma} \lambda_g^{\alpha}\in M(A\rtimes_{\alpha}\Gamma)
$$
where $\lambda_g^{\alpha}$ is the implementing unitary element of $\alpha_g$ in  
$M(A\rtimes_{\alpha}\Gamma)$. 
Then $e_{\alpha}(A\rtimes_{\alpha}\Gamma)e_{\alpha}$ is isomorphic to $A^{\alpha}$. 
It is easy to see that $\pi_{\tau_A\circ E_{\alpha}}(A\rtimes_{\alpha}\Gamma)^{''}$ is isomorphic to 
$\pi_{\tau_{A}}(A)^{''}\rtimes_{\tilde{\alpha}}\Gamma$. 
Note that we can regard $A\rtimes_{\alpha}\Gamma$ and $M(A\rtimes_{\alpha}\Gamma)$ 
as  subalgebras of $\pi_{\tau_A\circ E_{\alpha}}(A\rtimes_{\alpha}\Gamma)^{''}\cong 
\pi_{\tau_{A}}(A)^{''}\rtimes_{\tilde{\alpha}}\Gamma$ because we assume that $A$ is simple. 

Let $\rho$ be the right regular representation of $\Gamma$ on 
$\ell^2(\Gamma)$, and let 
$\mathbb{K}(\ell^2(\Gamma))$ be the C$^*$-algebra of compact operators on $\ell^2(\Gamma)$. 
Since $\Gamma$ is a finite group, we can identify $\mathbb{K}(\ell^2(\Gamma))$ with 
$M_{|\Gamma|}(\mathbb{C})$ by the standard way. 
Let $\{e_{g,h}^{\Gamma}\}_{g,h\in\Gamma}$ denote the canonical matrix units of 
$\mathbb{K}(\ell^2(\Gamma))=M_{|\Gamma|}(\mathbb{C})$. If $\alpha$ is cocycle conjugate to $\beta$, 
then $\alpha\otimes \mathrm{Ad}(\rho)$ is conjugate to 
$\beta\otimes \mathrm{Ad}(\rho)$. 
Although this fact can be regarded as a consequence of the Imai-Takai duality theorem 
in \cite{IT}, we can show this fact by direct computations. Indeed, let $\theta$ be an isomorphism 
from $A$ onto $B$ and $u$ a $\beta$-cocycle such that $\mathrm{Ad}(u_g)\circ \beta_g \circ \theta
=\theta \circ \alpha_g$ for any $g\in \Gamma$. Define a homomorphism $\Theta$ from 
$A\otimes\mathbb{K}(\ell^2(\Gamma))$ to $B\otimes\mathbb{K}(\ell^2(\Gamma))$ by
$$
\Theta \left( \sum_{h,k\in \Gamma} a_{h,k}\otimes e_{h,k}^{\Gamma}\right)
= \sum_{h,k\in\Gamma} u_{h^{-1}}^*\theta (a_{h,k})u_{k^{-1}}\otimes e_{h, k}^{\Gamma}
$$
for any $ \sum_{h,k\in \Gamma} a_{h,k}\otimes e_{h,k}^{\Gamma}\in 
A\otimes\mathbb{K}(\ell^2(\Gamma))$. 
Then $\Theta$ is an isomorphism from $A\otimes\mathbb{K}(\ell^2(\Gamma))$ onto 
$B\otimes\mathbb{K}(\ell^2(\Gamma))$ such that 
$\Theta \circ (\alpha_g\otimes \mathrm{Ad}(\rho_g))
=(\beta_g\otimes \mathrm{Ad}(\rho_g))\circ \Theta$ for any $g\in\Gamma$. 
Also, $(A\otimes\mathbb{K}(\ell^2(\Gamma)))^{\alpha\otimes \mathrm{Ad}(\rho)}$ 
is  isomorphic to $A\rtimes_{\alpha}\Gamma$. 

\subsection{Characteristic invariant and inner invariant} 
We shall recall some results in \cite{Jones}. 
We refer the reader to \cite{Jones}, \cite{Jones2} and \cite{Oc} for details and general cases. 
For a finite set $P$, let $M(P)$ the set of probability measures on $P$. 
Let $\mathbb{T}$ denote the unit circle in the complex plane. 

For a finite group $\Gamma$ and a normal subgroup $N$ of $\Gamma$, let $Z(\Gamma, N)$ be 
the set of  pairs $(\lambda, \mu)$ where $\lambda$ is a map from $\Gamma\times N$ to 
$\mathbb{T}$ and $\mu$ is a map from $N\times N$ to $\mathbb{T}$ satisfying the following 
relations: 
\begin{align*}
& (\mathrm{i})\; \mu (h_1, h_2)\mu (h_1h_2, h_3)=\mu (h_2,h_3)\mu(h_1, h_2h_3) \\
& (\mathrm{ii})\; \lambda(g_1, h_1h_2)\overline{\lambda(g_1, h_1)}\overline{\lambda(g_1, h_2)}=
\mu (h_1, h_2)\overline{\mu(g_1^{-1}h_1g_1, g_1^{-1}h_2g_1)} \\
& (\mathrm{iii})\; \lambda(g_1g_2, h_1)=\lambda(g_1, h_1)\lambda(g_2, g_1^{-1}h_1g_1) \\
& (\mathrm{iv})\; \lambda (h_1, h_2)= \mu(h_1, h_1^{-1}h_2h_1)\overline{\mu (h_2, h_1)} \\
& (\mathrm{v})\; \lambda (\iota, h_1)= \lambda (g_1, \iota)= \mu (\iota, h_1)= \mu (h_1, \iota)=1
\end{align*}
for any $g_1, g_2\in \Gamma$ and $h_1, h_2\in N$. 
With pointwise operations, $Z(\Gamma, N)$ is an abelian group. 
Note that $\mu$ is nothing but a 2-cocycle of $N$. 
For a map $\eta$ from $N$ to $\mathbb{T}$ satisfying $\eta (\iota)=1$, set 
$$
\lambda^{\eta} (g,h_1):= \eta(h_1)\overline{\eta(g^{-1}h_1g)}, \quad
\mu^{\eta}(h_1, h_2)=\eta(h_1h_2)\overline{\eta (h_2)\eta (h_1)} 
$$
for any $g\in\Gamma$, $h_1, h_2\in N$. Then 
$B(\Gamma, N):=\{(\lambda^{\eta}, \mu^{\eta})\; |\; \eta: N\to \mathbb{T},\; \eta (\iota)=1\}$ is 
a subgroup of $Z(\Gamma, N)$.  Let $\Lambda(\Gamma, N)$ be the quotient group 
$Z(\Gamma, N)/ B(\Gamma, N)$. 

Let $(\lambda, \mu)\in Z(\Gamma, N)$. 
Since $\mu$ is a 2-cocycle of $N$, we can consider the twisted group algebra $\mathbb{C}_{\mu} N$. 
Note that $\mathbb{C}_{\mu} N$ can be regarded as a von Neumann algebra because 
$N$ is a finite group. Define an action $\psi^{\lambda}$ of $\Gamma$ on $\mathbb{C}_{\mu} N$ by 
$$
\psi^{\lambda}_g\left( \sum_{h\in N}c_h h\right) := \sum_{h\in N}\lambda(g,h)c_{g^{-1}hg}h
$$ 
for any 
$g\in \Gamma$ and $\sum_{h\in N}c_h h\in\mathbb{C}_{\mu}N$.
By \cite[Proposition 1.3.1]{Jones}, $(\mathbb{C}_{\mu}N)^{\psi^{\lambda}}$ is a 
finite-dimensional abelian von Neumann algebra. 
Let $P_{\lambda, \mu}$ be the set of minimal projections in $(\mathbb{C}N)^{\psi^\lambda}$. 
Since $p\in P_{\lambda, \mu}$ is an element in $\mathbb{C}_{\mu}N$, $p$ can be written as a linear 
combination of elements in $N$. Let $X_p(h)$ denote the coefficients of $p\in P_{\lambda, \mu}$, that is, 
$p=\sum_{h\in N}X_{p}(h)h$. See \cite[Proposition 1.3.2]{Jones} for properties of $X_p(h)$. 
Let $H^1(N)^{\Gamma}$ be the group of homomorphisms $\eta$ from $N$ to $\mathbb{T}$ satisfying 
$\eta (ghg^{-1})=\eta (h)$ for any $g\in\Gamma$ and $h\in N$.  
Define an action $\partial$ of $H^{1}(N)^{\Gamma}$ on $\mathbb{C}_{\mu}N$ by 
$$
\partial_{\eta} \left( \sum_{h\in N}c_h h\right)=\sum_{h\in N}\eta (h)  c_hh
$$
for any $\eta\in H^1(N)^{\Gamma}$ and $\sum_{h\in N}c_h h\in\mathbb{C}_{\mu}N$. 
Since we have $\psi^{\lambda}_g\circ \partial_\eta
=\partial_\eta \circ \psi^{\lambda}_g$ for any $g\in \Gamma$ and $\eta\in H^1(N)^{\Gamma}$, 
$\partial$ induces an action on $P_{\lambda, \mu}$. 
Define an equivalence relation $\sim$ on $M(P_{\lambda, \mu})$ by $m\sim m^{\prime}$ if 
there exists an element $\eta$ in $H^1(N)^{\Gamma}$ such that $m=m\circ \partial_\eta$, and let 
$I(P_{\lambda, \mu}):=M(P_{\lambda, \mu})/\sim$. 

Let $\delta$ be an action of $\Gamma$ on a II$_1$ factor $M$. 
Choose a map $V$ from $N(\delta)$ to $U(M)$ such that $\delta_h=\mathrm{Ad}(V_h)$ for any 
$h\in N(\delta)$.
For any $g\in \Gamma$ and $h\in N(\delta)$, there exists a complex number 
$\lambda_{\delta}(g, h)$ 
in $\mathbb{T}$ such that $\delta_g (V_{g^{-1}hg})=\lambda_{\delta} (g, h)V_{h}$ because $M$ is a factor. 
In a similar way, for any $h_1, h_2\in N(\delta)$, there exists a complex number $\mu_{\delta} (h_1, h_2)$ 
in $\mathbb{T}$ such that $V_{h_1}V_{h_2}=\mu_\delta (h_1, h_2)V_{h_1h_2}$. 
It is easy to see that $(\lambda_{\delta}, \mu_{\delta})$ is an element in $Z(\Gamma, N)$. 
Note that $(\lambda_{\delta}, \mu_{\delta})\in Z(\Gamma, N)$ depends on the choice of $V$. 
Put $\Lambda (\delta):=[(\lambda_{\delta}, \mu_{\delta})]\in \Lambda(\Gamma, N(\delta))$. 
Then $\Lambda (\delta)$ does not depend on the choice of $V$ and is a 
cocycle conjugacy invariant. We call $\Lambda(\delta)$ 
the \textit{characteristic invariant} of $\delta$. 
It is easy to see that there exists a 
unitary representation $V$ of $N(\delta)$ on $M$  such that $\delta_{h}=\mathrm{Ad}(V_h)$ and 
$\delta_g(V_h)= V_{ghg^{-1}}$ for any $g\in \Gamma$ and $h\in N(\delta)$ if and only if 
$\Lambda(\delta)=[(\lambda_{\delta}, \mu_{\delta})]$ is trivial, that is, $(\lambda_{\delta}, \mu_{\delta})\in 
B(\Gamma, N(\delta))$. 

Define a homomorphism $\Phi_V$ from $\mathbb{C}_{\mu} N(\delta)$ to $M$ by 
$$
\Phi_V\left(\sum_{h\in N(\delta)}c_h h\right)=\sum_{h\in N(\delta)}c_h V_h
$$
for any $\sum_{h\in N(\delta)}c_h h\in\mathbb{C}_{\mu}N(\delta)$. 
By \cite[Corollary 2.1.2]{Jones}, the center $Z(M^{\delta})$ of $M^{\delta}$ is equal to 
$\Phi_V((\mathbb{C}_{\mu}N(\delta))^{\psi^{\lambda_{\delta}}})$. 
Define a probability measure $m_{V}(\delta)$ on $P_{\lambda_{\delta}, \mu_{\delta}}$ by 
$$
m_V(\delta)(p):= \tau_{M}(\Phi_V(p))
$$ 
for any $p\in P_{\lambda_{\delta}, \mu_{\delta}}$, and put  
$i(\delta):=[m_V(\delta)]\in I(P_{\lambda_{\delta}, \mu_{\delta}})$. Note that 
$i(\delta)$ does not depend on the choice of $V$ and is a conjugacy invariant. 
We call $i(\delta)$ the \textit{inner invariant} of $\delta$. 
Jones showed that $(N(\delta), \Lambda (\delta), i(\delta))$ is a complete conjugacy
invariant for actions of finite groups on the injective II$_1$ factor in \cite{Jones}. 

Define a map $\Pi_{V}$ from $\mathbb{C}_{\mu}N(\delta)$ to $M\rtimes_{\delta}\Gamma$ by
$$
\Pi_V\left(\sum_{h\in N(\delta)}c_h h\right)=\sum_{h\in N(\delta)}c_h V_h\lambda_h^{\delta*}.
$$ 
Then $\Pi_{V}$ induces an isomorphism from 
$(\mathbb{C}_{\mu}N(\delta))^{\psi^{\lambda_{\delta}}}$ onto $Z(M\rtimes_{\delta}\Gamma)$ by 
\cite[Corollary 2.2.2]{Jones}. Therefore the set of minimal projections in 
$Z(M\rtimes_{\delta}\Gamma)$ is equal to $\{ \Pi_V(p)\; |\; p\in P_{\lambda_{\delta}, \mu_{\delta}}\}$. 
For any $p\in P_{\lambda_{\delta}, \mu_{\delta}}$, 
define a function $\tau_{p}$ on $M\rtimes_{\delta}\Gamma$ 
by 
$$
\tau_{p}(T):=\frac{1}{X_p(\iota)}\tau_{M}(E_{\delta}(T\Pi_{V}(p)))
$$ 
for any $T\in M\rtimes_{\delta}\Gamma$.
Then $\tau_{p}$ is a extremal tracial state on $M\rtimes_{\delta}\Gamma$. 
Moreover, the set of extremal tracial states on $M\rtimes_{\delta}\Gamma$ is equal to 
$\{\tau_{p}\; |\; p\in P_{\lambda_\delta, \mu_{\delta}}\}$. 
Hence every tracial state on $M\rtimes_{\delta}\Gamma$ is normal because 
$P_{\lambda_{\delta}, \mu_{\delta}}$ is a finite set. 
Note that we have 
\begin{align*}
\tau_{p}\left(\sum_{g\in \Gamma}T_g \lambda_{g}^{\delta}\right)
&= \frac{1}{X_p(\iota)}\sum_{g\in \Gamma}\sum_{h\in N(\delta)}X_p(h)
\tau_{M}(T_g E_{\delta}(\lambda^{\delta}_gV_h\lambda_{h}^{\delta *})) \\
&=\frac{1}{X_p(\iota)}\sum_{g\in \Gamma}\sum_{h\in N(\delta)}X_p(h)
\tau_{M}(T_g\delta_{g}(V_h) E_{\delta}(\lambda_{gh^{-1}}^{\delta})) \\
&= \frac{1}{X_p(\iota)}\sum_{h\in N(\delta)}X_p(h)
\tau_{M}(T_hV_h) 
\end{align*}
for any $\sum_{g\in \Gamma}T_g \lambda_{g}^{\delta}\in M\rtimes_{\delta}\Gamma$ and 
$p\in P_{\lambda_\delta, \mu_{\delta}}$. 
In particular, we have 
$$
\tau_{p}(e_{\delta})
= \frac{1}{X_p(\iota)|\Gamma|}\tau_{M}\left( \Phi_V(p)\right)
=\frac{1}{X_p(\iota)|\Gamma|}m_V(\delta)(p) \eqno{(2.2.1)}
$$
for any $p\in P_{\lambda_\delta, \mu_{\delta}}$. 

\begin{rem}
Let $\alpha$ be an action of a finite group $\Gamma$ on a unital C$^*$-algebra $A$, and let 
$N$ be a normal subgroup of $\Gamma$. Assume that there exist an element 
$(\lambda, \mu)$ in $Z(\Gamma, N)$ and a map $v$ from $N$ to 
$U(A)$ such that $\alpha_h=\mathrm{Ad}(v_h)$, 
$\alpha_g (v_{g^{-1}hg})= \lambda(g,h)v_{h}$ and $v_{h}v_{k}=\mu(h, k)v_{hk}$ for any $g\in\Gamma$ and 
$h, k\in N$. Then we can define a homomorphism $\Phi_v$ from $(\mathbb{C}_{\mu}N)^{\psi^\lambda}$ 
to $Z(A^{\alpha})$ by 
$$
\Phi_v\left(\sum_{h\in N}c_h h\right)=\sum_{h\in N}c_h v_h
$$
for any $\sum_{h\in N}c_h h\in (\mathbb{C}_{\mu}N)^{\psi^\lambda}$ as above. 
\end{rem}

\subsection{Tracial state spaces and coboundaries}

The following proposition is a generalization of \cite[Proposition 1.5]{Na6}.

\begin{pro}\label{pro:tracial-state}
Let $A$ be a simple separable monotracial C$^*$-algebra, and let $\alpha$ be an outer action of 
a finite group $\Gamma$ on $A$. 
Then every tracial state on $A\rtimes_{\alpha}\Gamma$ is a restriction of a tracial state on 
$\pi_{\tau_{A}\circ E_{\alpha}}(A\rtimes_{\alpha}\Gamma)^{''}\cong 
\pi_{\tau_{A}}(A)^{''}\rtimes_{\tilde{\alpha}}\Gamma$. 
Furthermore, the set of extremal tracial states 
on $A\rtimes_{\alpha}\Gamma$ is equal to 
$\{\tau_{p}|_{A\rtimes_{\alpha}\Gamma} \; |\; p\in P_{\lambda_{\tilde{\alpha}}, \mu_{\tilde{\alpha}}}\}$. 
\end{pro}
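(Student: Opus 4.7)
My strategy is to show that every tracial state $\tau$ on $A\rtimes_{\alpha}\Gamma$ extends uniquely to a normal tracial state on the enveloping von Neumann algebra $M\rtimes_{\tilde{\alpha}}\Gamma$, where $M:=\pi_{\tau_{A}}(A)^{''}$, and then to invoke the description of normal traces on $M\rtimes_{\tilde{\alpha}}\Gamma$ recalled in Section~\ref{sec:pre}. The first step is to observe that $\tau|_{A}$ must equal $c\tau_{A}$ for some $c>0$: monotraciality of $A$ forces any nonzero bounded positive tracial functional on $A$ to be a scalar multiple of $\tau_{A}$, and $\tau|_{A}\neq 0$ because an approximate unit of $A$ is also an approximate unit of $A\rtimes_{\alpha}\Gamma$. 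In particular, $\tau|_{A}$ is normal on $M$.

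Next, for each $g\in\Gamma$ I would define $\phi_{g}(a):=\tau(a\lambda_{g}^{\alpha})$. A Cauchy--Schwarz computation using an approximate unit $(e_{i})$ of $A$, the identity $\lambda_{g}^{\alpha}e_{i}^{2}\lambda_{g}^{\alpha*}=\alpha_{g}(e_{i}^{2})$, and the $\alpha$-invariance of $\tau_{A}$ (which is automatic from uniqueness) yields
\[
|\phi_{g}(a)|^{2}\leq c^{2}\,\tau_{A}(a^{*}a),
\]
so that each $\phi_{g}$ extends uniquely to a $\sigma$-weakly continuous functional $\tilde{\phi}_{g}$ on $M$. Piecing these together via $\tilde{\tau}\bigl(\sum_{g\in\Gamma}x_{g}\lambda_{g}^{\tilde{\alpha}}\bigr):=\sum_{g\in\Gamma}\tilde{\phi}_{g}(x_{g})$ defines a $\sigma$-weakly continuous linear functional on $M\rtimes_{\tilde{\alpha}}\Gamma$ that agrees with $\tau$ on the $\sigma$-weakly dense subalgebra $A\rtimes_{\alpha}\Gamma$. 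Positivity and traciality of $\tilde{\tau}$ then transfer from those of $\tau$ via Kaplansky density and $\sigma$-weak continuity, proving the first assertion.

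For the second assertion, the preliminaries give that every normal tracial state on $M\rtimes_{\tilde{\alpha}}\Gamma$ is a convex combination of $\{\tau_{p}\}_{p\in P_{\lambda_{\tilde{\alpha}},\mu_{\tilde{\alpha}}}}$, which form the extremal normal traces. By the first part, restricting to $A\rtimes_{\alpha}\Gamma$ realises every tracial state on $A\rtimes_{\alpha}\Gamma$ as a convex combination of the $\tau_{p}|_{A\rtimes_{\alpha}\Gamma}$. These restrictions are mutually distinct because two normal functionals on $M\rtimes_{\tilde{\alpha}}\Gamma$ agreeing on the $\sigma$-weakly dense subalgebra $A\rtimes_{\alpha}\Gamma$ must coincide, and each $\tau_{p}|_{A\rtimes_{\alpha}\Gamma}$ is extremal because its GNS double commutant equals that of $\tau_{p}$, which is a factor.

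The main obstacle will be the normality step: one must carry out the Cauchy--Schwarz estimate carefully in the nonunital setting, using the approximate-unit calculation above, and then verify that the putative extension $\tilde{\tau}$ is genuinely a well-defined normal tracial state on the von Neumann crossed product (rather than merely a bounded linear functional). Once normality of $\tau$ is secured, the remainder follows directly from the structure of the type~II$_{1}$ crossed product $M\rtimes_{\tilde{\alpha}}\Gamma$ recalled in the preliminaries.
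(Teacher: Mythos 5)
Your argument is correct, but it runs in the opposite direction from the paper's. You extend $\tau$ \emph{upward}: you show $\tau|_{A}=\tau_{A}$ is normal, prove each Fourier coefficient $\phi_{g}(a)=\tau(a\lambda_{g}^{\alpha})$ is $\|\cdot\|_{2}$-continuous by Cauchy--Schwarz, assemble these into a normal functional on $\pi_{\tau_{A}}(A)''\rtimes_{\tilde{\alpha}}\Gamma$ via the (finite-group) Fourier decomposition, and then must verify by hand, through Kaplansky density, that the resulting normal functional is positive and tracial --- the step you rightly flag as the main obstacle. The paper instead maps \emph{downward}: using outerness to get simplicity of $A\rtimes_{\alpha}\Gamma$ (so that $\pi_{\tau}$ is faithful) and monotraciality to get $\tau|_{A}=\tau_{A}$, it builds a unital homomorphism $\Phi$ from $\pi_{\tau_{A}}(A)''\rtimes_{\tilde{\alpha}}\Gamma$ onto a subalgebra of $\pi_{\tau}(A\rtimes_{\alpha}\Gamma)''$ restricting to the identity on $A\rtimes_{\alpha}\Gamma$, and pulls back the canonical trace as $\tilde{\tau}\circ\Phi$; positivity and traciality are then automatic, since one is composing a trace with a unital homomorphism, at the price of checking that $\Phi$ is a well-defined homomorphism (again via the Fourier decomposition and normality of $\varphi$). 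The two proofs are of comparable length and both hinge on finiteness of $\Gamma$; your version has the mild advantage of not needing faithfulness of $\pi_{\tau}$ (hence arguably not needing outerness for the first assertion), while the paper's avoids the density bookkeeping for positivity and traciality. The second halves --- injectivity of the restriction map because all traces on the von Neumann crossed product are normal, and the identification of the extremal traces with the $\tau_{p}|_{A\rtimes_{\alpha}\Gamma}$ --- are essentially identical in both arguments, and your extra observation that extremality of $\tau_{p}|_{A\rtimes_{\alpha}\Gamma}$ follows from uniqueness of the normal extension is a correct way to close that loop.
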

\begin{proof}
Let $\tau$ be a tracial state on $A\rtimes_{\alpha}\Gamma$. 
Then $\pi_{\tau}$ is a faithful representation because $A\rtimes_{\alpha}\Gamma$ is 
simple by the outerness of $\alpha$. Since $A$ is monotracial and 
$A\subset A\rtimes_{\alpha}\Gamma$ is a nondegenerate inclusion, 
we have $\tau|_A=\tau_{A}$. Hence an inclusion $A\subset A\rtimes_{\alpha}\Gamma$ induces 
an injective homomorphism $\varphi$ from $\pi_{\tau_A}(A)^{''}$ to 
$\pi_{\tau}(A\rtimes_{\alpha}\Gamma)^{''}$. 
Define a homomorphism $\Phi$ from 
$\pi_{\tau_{A}\circ E_{\alpha}}(A\rtimes_{\alpha}\Gamma)^{''}
\cong\pi_{\tau_{A}}(A)^{''}\rtimes_{\tilde{\alpha}}\Gamma$ to 
$\pi_{\tau}(A\rtimes_{\alpha}\Gamma)^{''}$
by 
$$
\Phi \left( \sum_{g\in \Gamma}T_g \lambda_g^{\tilde{\alpha}}\right)= \sum_{g\in\Gamma}
\varphi (T_g) \pi_{\tau}(\lambda_g^{\alpha})
$$
for any $\sum_{g\in \Gamma}T_g \lambda_g^{\tilde{\alpha}}\in 
\pi_{\tau_{A}}(A)^{''}\rtimes_{\tilde{\alpha}}\Gamma$. Note that $\Phi|_{A\rtimes_{\alpha}\Gamma}$ 
can be regarded as the identity map because  $\pi_{\tau_A\circ E_{\alpha}}$ and $\pi_{\tau}$ 
are faithful representations and 
we have 
$
\Phi\left(\pi_{\tau_A\circ E_{\alpha}}(x)\right)
=\pi_{\tau}\left(x\right)
$
for any $x\in A\rtimes_{\alpha}\Gamma$. 
Since $\Phi$ is a unital homomorphism, 
$\tilde{\tau} \circ \Phi$ is a tracial state on 
$\pi_{\tau_{A}\circ E_{\alpha}}(A\rtimes_{\alpha}\Gamma)^{''}$. 
Therefore $\tau$ is a restriction of a tracial state on 
$\pi_{\tau_{A}\circ E_{\alpha}}(A\rtimes_{\alpha}\Gamma)^{''}$. 

Since every tracial state on $\pi_{\tau_{A}\circ E_{\alpha}}(A\rtimes_{\alpha}\Gamma)^{''}$ 
is normal (see the previous subsection), 
the restriction map $T_1(\pi_{\tau_{A}\circ E_{\alpha}}(A\rtimes_{\alpha}\Gamma)^{''})\ni 
\tau \mapsto \tau |_{A\rtimes_{\alpha}\Gamma}\in T_1(A\rtimes_{\alpha}\Gamma)$ is 
injective. 
Therefore the set of extremal tracial states 
on $A\rtimes_{\alpha}\Gamma$ is equal to 
$\{\tau_{p}|_{A\rtimes_{\alpha}\Gamma} \; |\; p\in P_{\lambda_{\tilde{\alpha}}, \mu_{\tilde{\alpha}}}\}$ 
by the results reviewed in the previous subsection. 
\end{proof}

Let  $A$ be a simple separable monotracial C$^*$-algebra, and let $\alpha$ be an outer action of 
a finite group $\Gamma$ on $A$. 
For an $\alpha$-cocycle $u$, put 
$$
e_{u}:= \frac{1}{|\Gamma |}\sum_{g\in\Gamma} u_g\lambda_{g}^{\alpha}\in M(A\rtimes_{\alpha}\Gamma).
$$
Then $e_{u}$ is a projection. 
An $\alpha$-cocycle $u$ is a \textit{coboundary} if there exists a unitary element $w$ in $M(A)$ 
such that $u_g=w\alpha_g(w^*)$ for any $g\in \Gamma$. Note that if 
an $\alpha$-cocycle $u$ is coboundary, then $\mathrm{Ad}(u)\circ \alpha$ is conjugate to 
$\alpha$ because we have 
$
\mathrm{Ad}(u_g)\circ \alpha=  \mathrm{Ad}(w) \circ \alpha_g \circ \mathrm{Ad}(w)^{-1}
$
for any $g\in \Gamma$. 
We have the following proposition 
by the same argument as in the proof of \cite[Proposition 3.1.2]{Jones}

\begin{pro}\label{pro:Jones-3.1.2}
Let $u$ be an $\alpha$-cocycle. 
Then $u$ is a coboundary if and only if $e_{u}$ is Murray-von Neumann
equivalent to $e_{\alpha}$ in $M(A\rtimes_{\alpha}\Gamma)$. 
\end{pro}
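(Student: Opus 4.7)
The plan is to follow the scheme of Jones' original proof of [Jones, Proposition 3.1.2], adapted to the (non-unital) multiplier algebra setting here. The argument splits along the two implications.

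For the forward direction, assume $u_g = w\alpha_g(w^*)$ for some $w \in U(M(A))$. Using the basic identity $\alpha_g(a)\lambda_g^\alpha = \lambda_g^\alpha a$, a direct calculation gives $e_u = w e_\alpha w^*$. Then $v := w e_\alpha$ is a partial isometry in $M(A\rtimes_\alpha \Gamma)$ with $v^*v = e_\alpha$ and $vv^* = e_u$, producing the required Murray-von Neumann equivalence.

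For the converse, fix a partial isometry $v \in M(A\rtimes_\alpha \Gamma)$ with $v^*v = e_\alpha$ and $vv^* = e_u$. Since $\Gamma$ is finite, $v$ admits a unique expansion $v = \sum_{g\in\Gamma} v_g \lambda_g^\alpha$ with $v_g \in M(A)$. The first key step is to exploit the absorption relation $\lambda_g^\alpha e_\alpha = e_\alpha$, together with the forced equality $v e_\alpha = v$, to conclude that all the coefficients $v_g$ coincide; equivalently, $v = w_0 e_\alpha$ for the single element $w_0 := \sum_g v_g \in M(A)$. Matching coefficients of $\lambda_g^\alpha$ on both sides of $vv^* = e_u$ then yields the candidate coboundary identity $u_g = w_0 \alpha_g(w_0^*)$ for every $g \in \Gamma$.

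The main obstacle, and the only genuinely delicate step, is upgrading $w_0$ from a bare element of $M(A)$ to a unitary. I would handle this in two stages. First, the cocycle relation forces $u_\iota = 1$, so specializing the identity $u_g = w_0\alpha_g(w_0^*)$ to $g = \iota$ gives $w_0 w_0^* = 1$; in particular $p := w_0^* w_0$ is a projection. Second, to establish $p = 1$, I would use the other relation $v^*v = e_\alpha$: reading off the $\lambda_\iota^\alpha$-component of $e_\alpha w_0^* w_0 e_\alpha$ produces the averaging identity $\tfrac{1}{|\Gamma|}\sum_{g\in\Gamma}\alpha_g(p) = 1$ in $M(A)$. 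Since each $\alpha_g(p)$ is a positive contraction, a convex combination of them equals $1$ only if every $\alpha_g(p) = 1$; in particular $p = 1$. Hence $w_0 \in U(M(A))$ exhibits $u$ as a coboundary, completing the argument.
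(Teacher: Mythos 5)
Your argument is correct and follows the same route the paper intends: the paper gives no proof of this proposition, deferring to the Fourier-coefficient argument of Jones's Proposition 3.1.2, which you reproduce faithfully ($e_u=we_\alpha w^*$ in one direction; $v=ve_\alpha$ forcing $v=w_0e_\alpha$ and coefficient matching in the other). The only genuinely new ingredient, upgrading $w_0$ from a co-isometry to a unitary via $u_\iota=1$ and the averaging identity $\frac{1}{|\Gamma|}\sum_{g\in\Gamma}\alpha_g(w_0^*w_0)=1_{M(A)}$, is precisely the adaptation needed outside the finite von Neumann algebra setting of Jones, and you handle it correctly.
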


The following proposition is an analogous proposition of \cite[Theorem 3.1.3]{Jones}. 

\begin{pro}\label{pro:coboundary}
Let $A$ be a simple separable exact monotracial stably projectionless  
$\mathcal{Z}$-stable  C$^*$-algebra, and let $\alpha$ be an outer action of a finite group 
$\Gamma$ on $A$. Assume that $V$ is a map from $N(\tilde{\alpha})$ to 
$U(\pi_{\tau_A}(A)^{''})$ such that $\tilde{\alpha}_h=\mathrm{Ad}(V_h)$ for any 
$h\in N(\tilde{\alpha})$. Then an $\alpha$-cocycle $u$ is a coboundary if and only if 
$$
\tilde{\tau}_A(\pi_{\tau_A}(u_h)V_h)= \tilde{\tau}_A(V_h)
$$
for any $h\in N(\tilde{\alpha})$.
\end{pro}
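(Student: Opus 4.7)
My plan is to reduce everything, via Proposition \ref{pro:Jones-3.1.2}, to showing that the stated trace identity is equivalent to the Murray--von Neumann equivalence of $e_u$ and $e_\alpha$ in $M(A\rtimes_\alpha\Gamma)$, and then to detect this equivalence on the (finitely many) extremal tracial states of $A\rtimes_\alpha\Gamma$.

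\textbf{Necessity.} Suppose $u_g=w\alpha_g(w^*)$ for some $w\in U(M(A))$, and set $W:=\pi_{\tau_A}(w)$. For $h\in N(\tilde{\alpha})$ one computes in $\pi_{\tau_A}(A)''$
$$\pi_{\tau_A}(u_h)V_h = W\tilde{\alpha}_h(W^*)V_h = WV_hW^*V_h^*V_h = WV_hW^*,$$
and the trace identity follows from the traciality of $\tilde{\tau}_A$.

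\textbf{Sufficiency.} Assume $\tilde{\tau}_A(\pi_{\tau_A}(u_h)V_h)=\tilde{\tau}_A(V_h)$ for every $h\in N(\tilde{\alpha})$. By Proposition \ref{pro:tracial-state}, every extremal tracial state on $A\rtimes_\alpha\Gamma$ is the restriction of some $\tau_p$ with $p\in P_{\lambda_{\tilde{\alpha}},\mu_{\tilde{\alpha}}}$. Writing $e_u = \tfrac{1}{|\Gamma|}\sum_{g\in\Gamma}\pi_{\tau_A}(u_g)\lambda_g^{\tilde{\alpha}}$, applying the general formula for $\tau_p$ recorded in Section 2.2, and comparing with (2.2.1) applied to $e_\alpha$, the hypothesis yields
$$\tau_p(e_u)=\frac{1}{X_p(\iota)|\Gamma|}\sum_{h\in N(\tilde{\alpha})}X_p(h)\tilde{\tau}_A(\pi_{\tau_A}(u_h)V_h)=\frac{1}{X_p(\iota)|\Gamma|}\tilde{\tau}_A(\Phi_V(p))=\tau_p(e_\alpha).$$
Hence $e_u$ and $e_\alpha$ are assigned the same value by every tracial state on $A\rtimes_\alpha\Gamma$.

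\textbf{From equal traces to Murray--von Neumann equivalence.} This is the step I expect to be the main obstacle, since $e_u$ and $e_\alpha$ live in the multiplier algebra $M(A\rtimes_\alpha\Gamma)$ rather than in $A\rtimes_\alpha\Gamma$ itself (which, being a crossed product of a stably projectionless algebra by a finite group, may still be stably projectionless). The idea is to leverage $\mathcal{Z}$-stability: the crossed product $B:=A\rtimes_\alpha\Gamma$ is simple (by outerness of $\alpha$), separable, exact, has a finite-dimensional tracial state space, and is $\mathcal{Z}$-stable (inherited from $A$ since $\Gamma$ is finite). Strict comparison for projections in $M(B)$ of a simple $\mathcal{Z}$-stable algebra (as recorded in the author's earlier work \cite{Na6} and related to Cuntz semigroup computations in the multiplier algebra of a simple stably projectionless $\mathcal{Z}$-stable algebra) then implies that projections in $M(B)$ whose dimension functions agree on every trace are Murray--von Neumann equivalent. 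Applying this to $e_u$ and $e_\alpha$ and invoking Proposition \ref{pro:Jones-3.1.2} yields the conclusion.
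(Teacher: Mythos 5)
Your proposal is correct and follows essentially the same route as the paper: reduce to Murray--von Neumann equivalence of $e_u$ and $e_\alpha$ via Proposition \ref{pro:Jones-3.1.2}, verify $\tau_p(e_u)=\tau_p(e_\alpha)$ for all $p\in P_{\lambda_{\tilde{\alpha}},\mu_{\tilde{\alpha}}}$ using the trace formula and Proposition \ref{pro:tracial-state}, and conclude by comparison of projections in the multiplier algebra. The paper settles the step you flag as the main obstacle exactly as you anticipate, noting that $A\rtimes_\alpha\Gamma$ is simple, separable, exact, $\mathcal{Z}$-stable (by Szab\'o's equivariant property (SI) results) and stably projectionless (as a subalgebra of $A\otimes\mathbb{K}(\ell^2(\Gamma))$), and then citing \cite[Corollary 2.10]{Na0} for the trace-to-equivalence implication.
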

\begin{proof}
Easy computations show the only if part. We shall show the if part. 
By the assumption, $A\rtimes_{\alpha}\Gamma$ is simple, separable and exact. 
\cite[Theorem 5.20]{Sza6}(see also \cite[Theorem 1.1]{Sa2} ) implies that 
$A\rtimes_{\alpha}\Gamma$ is $\mathcal{Z}$-stable. 
Since $A\rtimes_{\alpha}\Gamma$ can be regarded as a subalgebra of 
$A\otimes \mathbb{K}(\ell^2(\Gamma))$, $A\rtimes_{\alpha}\Gamma$ is stably projectionless. 
Hence it is enough to show that we have 
$
\tau_p (e_{u})=\tau_p (e_{\alpha})
$
for any $p\in P_{\lambda_{\tilde{\alpha}}, \mu_{\tilde{\alpha}}}$ by Proposition \ref{pro:tracial-state}, 
Proposition \ref{pro:Jones-3.1.2} and \cite[Corollary 2.10]{Na0}. 
Since we have 
\begin{align*}
\tau_{p}(e_u)
&=\frac{1}{X_p(\iota)|\Gamma|}\sum_{h\in N(\delta)}X_p(h)
\tilde{\tau}_{A}(\pi_{\tau_A}(u_h)V_h) 
\end{align*}
and 
\begin{align*}
\tau_{p}(e_\alpha)
&=\frac{1}{X_p(\iota)|\Gamma|}\sum_{h\in N(\delta)}X_p(h)
\tilde{\tau}_{A}(V_h), 
\end{align*}
we obtain the conclusion by the assumption. 
\end{proof}

As an application of the proposition above, we shall show the following proposition. 

\begin{pro}\label{pro:def-w-absorbing}
Let $\alpha$ be an outer $\mathcal{W}$-absorbing action of a finite group $\Gamma$ on 
$\mathcal{W}$, that is, there exist a C$^*$-algebra $B$ 
and an action $\beta$ of $\Gamma$ on $B$ such that $\alpha$ is cocycle conjugate to  
$\beta\otimes \mathrm{id}_{\mathcal{W}}$ on $B\otimes \mathcal{W}$. 
Then there exists an outer action $\gamma$ of $\Gamma$ 
on $\mathcal{W}$ such that $\alpha$ is conjugate to 
$\gamma\otimes\mathrm{id}_{\mathcal{W}}$ on 
$\mathcal{W}\otimes \mathcal{W}$. 
\end{pro}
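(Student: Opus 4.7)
\medskip

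\noindent\textbf{Proof plan.}
The plan is to define $\gamma$ by transporting the $(\beta\otimes\mathrm{id}_{\mathcal{W}})$-component of the absorption hypothesis to $\mathcal{W}$, use the self-absorption $\mathcal{W}\otimes\mathcal{W}\cong\mathcal{W}$ to deduce that $\gamma\otimes\mathrm{id}_{\mathcal{W}}$ is conjugate to $\gamma$, and finally invoke Proposition~\ref{pro:coboundary} to promote the resulting cocycle conjugacy between $\alpha$ and $\gamma$ to a genuine conjugacy.

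First I would fix an isomorphism $\theta\colon\mathcal{W}\to B\otimes\mathcal{W}$ and a $(\beta\otimes\mathrm{id}_{\mathcal{W}})$-cocycle $u$ with $\mathrm{Ad}(u_g)\circ(\beta_g\otimes\mathrm{id}_{\mathcal{W}})\circ\theta=\theta\circ\alpha_g$. Since $B\otimes\mathcal{W}\cong\mathcal{W}$ via $\theta^{-1}$, I would pick an isomorphism $\sigma\colon B\otimes\mathcal{W}\to\mathcal{W}$ and set $\gamma_g:=\sigma\circ(\beta_g\otimes\mathrm{id}_{\mathcal{W}})\circ\sigma^{-1}$. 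Outerness of $\gamma$ follows from outerness of $\alpha$ via the cocycle conjugacy. Moreover $\gamma\otimes\mathrm{id}_{\mathcal{W}}$ on $\mathcal{W}\otimes\mathcal{W}$ is conjugate to $\gamma$ on $\mathcal{W}$: via $\sigma\otimes\mathrm{id}_{\mathcal{W}}$ it is conjugate to $\beta\otimes\mathrm{id}_{\mathcal{W}}\otimes\mathrm{id}_{\mathcal{W}}$ on $B\otimes\mathcal{W}\otimes\mathcal{W}$, any isomorphism $\mathcal{W}\otimes\mathcal{W}\cong\mathcal{W}$ on the last two tensor factors returns this to $\beta\otimes\mathrm{id}_{\mathcal{W}}$ on $B\otimes\mathcal{W}$, and $\sigma$ takes this to $\gamma$. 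So it suffices to prove that $\alpha$ is conjugate to $\gamma$ on $\mathcal{W}$.

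The data $(\sigma\circ\theta,\sigma(u))$ gives a cocycle conjugacy from $\alpha$ to $\gamma$, and, since cocycle conjugacy preserves $N$, we have $N(\tilde{\alpha})=N(\tilde{\gamma})$. By Proposition~\ref{pro:coboundary} it remains to verify the coboundary identity $\tilde{\tau}_{\mathcal{W}}\bigl(\pi_{\tau_{\mathcal{W}}}(\sigma(u_h))V_h\bigr)=\tilde{\tau}_{\mathcal{W}}(V_h)$ for every $h\in N(\tilde{\gamma})$, where $V_h$ implements $\tilde{\gamma}_h$ on $\pi_{\tau_{\mathcal{W}}}(\mathcal{W})''$. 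Once this is proved, there is $w\in U(M(\mathcal{W}))$ with $\sigma(u_g)=w\gamma_g(w^*)$, and then $\mathrm{Ad}(w^*)\circ\sigma\circ\theta$ realises a conjugacy $\alpha\cong\gamma$, and hence $\alpha$ is conjugate to $\gamma\otimes\mathrm{id}_{\mathcal{W}}$.

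The hard step is this tracial identity. Inner perturbations of $\sigma$ do not alter whether $\sigma(u)$ is a coboundary, so one must exploit the $\mathcal{W}$-absorbing hypothesis in an essential way. My plan would be to re-choose $\sigma$ by going through an intermediate isomorphism $B\otimes\mathcal{W}\otimes\mathcal{W}\cong B\otimes\mathcal{W}$ (provided by $\mathcal{W}\otimes\mathcal{W}\cong\mathcal{W}$ on the last two factors) so that the transported cocycle becomes trivial on the additional $\mathcal{W}$-factor; the implementing unitary $V_h$ should then split as $V'_h\otimes 1$ in the ensuing decomposition of $\pi_{\tau_{\mathcal{W}}}(\mathcal{W})''$, and the coboundary identity should reduce to a transparent tensor-factor computation using the product form of the trace.
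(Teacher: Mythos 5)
Your choice of $\gamma$ is the wrong one, and this breaks the argument precisely at the step you identify as hard. You set $\gamma=\sigma\circ(\beta\otimes\mathrm{id}_{\mathcal{W}})\circ\sigma^{-1}$, so proving that $\alpha$ is conjugate to $\gamma$ amounts to proving that $\alpha$ is conjugate to $\beta\otimes\mathrm{id}_{\mathcal{W}}$, equivalently that the cocycle $u$ is a $(\beta\otimes\mathrm{id}_{\mathcal{W}})$-coboundary. That is false in general: conjugacy of outer $\mathcal{W}$-absorbing actions remembers the inner invariant $i(\tilde{\alpha})$ (Theorem \ref{thm:main-2}), which is not a cocycle conjugacy invariant, and Theorem \ref{thm:model} produces actions with the same $(N,\Lambda)$ but distinct $i$; so there are $\alpha$ cocycle conjugate but not conjugate to $\beta\otimes\mathrm{id}_{\mathcal{W}}$, and for these the tracial identity of Proposition \ref{pro:coboundary} genuinely fails for $u$. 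Moreover, as you half-observe yourself, whether $\sigma(u)$ is a $\gamma$-coboundary is independent of the choice of intertwining isomorphism $\sigma$ altogether, so ``re-choosing $\sigma$'' cannot create the missing identity; and even if one arranges $\sigma(u_h)=u_h'\otimes 1$ and $V_h=V_h'\otimes 1$, the identity to be checked becomes $\tilde{\tau}(\pi(u_h')V_h')=\tilde{\tau}(V_h')$, which is the same non-trivial (and generally false) statement one tensor factor down, not a transparent computation.

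The paper avoids this by taking $\gamma:=\mathrm{Ad}(u)\circ(\beta\otimes\mathrm{id}_{\mathcal{W}})$ on $B\otimes\mathcal{W}\cong\mathcal{W}$, i.e.\ the cocycle-perturbed action, which is conjugate to $\alpha$ by construction; the real content is then to show that $\gamma\otimes\mathrm{id}_{\mathcal{W}}$ is conjugate to $\gamma$. The cocycle to which Proposition \ref{pro:coboundary} is actually applied is $w_g=(u_g\otimes 1_{M(\mathcal{W})})\,(\mathrm{id}_B\otimes\varphi)(u_g^*)$, which connects $\beta':=(\mathrm{id}_B\otimes\varphi)\circ\gamma\circ(\mathrm{id}_B\otimes\varphi)^{-1}$ to $\gamma\otimes\mathrm{id}_{\mathcal{W}}$, and for this $w$ the trace condition does hold, because the two occurrences of $u_h$ pair with the implementing unitaries to give equal traces via the trace-preserving isomorphism $\mathrm{id}_B\otimes\varphi$. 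Your surrounding reductions (outerness of the candidate action, the tensor shuffle showing a $\beta\otimes\mathrm{id}_{\mathcal{W}}$-type action absorbs another $\mathrm{id}_{\mathcal{W}}$, and the use of Proposition \ref{pro:coboundary} to upgrade a cocycle conjugacy to a conjugacy) are all sound; the gap is solely in which action and which cocycle the coboundary criterion must be verified for.
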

\begin{proof}
By the assumption, there exist an isomorphism $\theta$ from $\mathcal{W}$ onto 
$B\otimes\mathcal{W}$ and a $\beta\otimes \mathrm{id}_{\mathcal{W}}$-cocycle $u$ such that 
$\theta\circ \alpha_g =\mathrm{Ad}(u_g)\circ (\beta_g\otimes\mathrm{id}_{\mathcal{W}})\circ \theta$ 
for any $g\in \Gamma$. 
Take an isomorphism $\varphi$ from $\mathcal{W}$ onto $\mathcal{W}\otimes \mathcal{W}$ and 
a map $V$ from $N(\tilde{\beta})$ to $U(\pi_{\tau_{B}}(B)^{''})$ such that 
$\tilde{\beta_h}=\mathrm{Ad}(V_h)$. 
Define an action $\beta^{\prime}$ on $B\otimes \mathcal{W}\otimes \mathcal{W}$ by 
$$
\beta^{\prime}_g:= (\mathrm{id}_{B}\otimes \varphi)\circ \mathrm{Ad}(u_g) \circ 
(\beta_g\otimes\mathrm{id}_{\mathcal{W}})\circ (\mathrm{id}_B\otimes \varphi )^{-1}
$$ 
for any 
$g\in \Gamma$. 
Then $\alpha$ is conjugate to $\beta^{\prime}$. 
Note that $\beta^{\prime}$ is outer because $\alpha$ is outer. 
It is easy to see that we have 
$$
\tilde{\beta}^{\prime}_h= \mathrm{Ad}(\pi_{\tau_{B\otimes \mathcal{W}\otimes\mathcal{W}}}
((\mathrm{id}_{B}\otimes \varphi)(u_h))(V_h\otimes 1_{\pi_{\tau_{\mathcal{W}}}(\mathcal{W})^{''}}\otimes 
1_{\pi_{\tau_{\mathcal{W}}}(\mathcal{W})^{''}}))
$$ 
for any $h\in N(\tilde{\beta}^{\prime})$. 
Put 
$$
w_g:= (u_g\otimes 1_{M(\mathcal{W})})(\mathrm{id}_B\otimes \varphi)(u_g^*)\in 
U(M(B\otimes \mathcal{W}\otimes \mathcal{W}))
$$
for any $g\in \Gamma$. 
Then $w$ is a $\beta^{\prime}$-cocycle and 
$$
\mathrm{Ad}(w_g)\circ \beta^{\prime}_g =  (\mathrm{Ad}(u_g)\circ 
(\beta_g\otimes \mathrm{id}_{\mathcal{W}})) \otimes \mathrm{id}_{\mathcal{W}}\quad 
\text{on}\quad B\otimes \mathcal{W}\otimes \mathcal{W}
$$
for any $g\in \Gamma$. Define an action $\gamma$ of $\Gamma$ on 
$B\otimes \mathcal{W}\cong \mathcal{W}$ 
by 
$$
\gamma_g:= \mathrm{Ad}(u_g)\circ (\beta_g\otimes \mathrm{id}_{\mathcal{W}})
$$
for any $g\in \Gamma$. Then $\mathrm{Ad}(w)\circ \beta^{\prime}$ is conjugate to 
$\gamma\otimes \mathrm{id}_{\mathcal{W}}$ on $\mathcal{W}\otimes \mathcal{W}$. 
We have 
\begin{align*}
&\tilde{\tau}_{B\otimes \mathcal{W}\otimes \mathcal{W}}
(\pi_{\tau_{B\otimes \mathcal{W}\otimes\mathcal{W}}}(w_h)
\pi_{\tau_{B\otimes \mathcal{W}\otimes\mathcal{W}}}
((\mathrm{id}_{B}\otimes \varphi)(u_h))(V_h\otimes 1_{\pi_{\tau_{\mathcal{W}}}(\mathcal{W})^{''}}\otimes 
1_{\pi_{\tau_{\mathcal{W}}}(\mathcal{W})^{''}})) \\
&=\tilde{\tau}_{B\otimes \mathcal{W}\otimes \mathcal{W}}
(\pi_{\tau_{B\otimes \mathcal{W}\otimes\mathcal{W}}}(u_h\otimes 1_{M(\mathcal{W})})
(V_h\otimes 1_{\pi_{\tau_{\mathcal{W}}}(\mathcal{W})^{''}}\otimes 1_{\pi_{\tau_{\mathcal{W}}}(\mathcal{W})^{''}}))
\\
&= \tilde{\tau}_{B\otimes \mathcal{W}\otimes \mathcal{W}}
(\pi_{\tau_{B\otimes \mathcal{W}}}(u_h)(V_h \otimes 1_{\pi_{\tau_{\mathcal{W}}}(\mathcal{W})^{''}})
\otimes 1_{\pi_{\tau_{\mathcal{W}}}(\mathcal{W})^{''}}) \\
&= \tilde{\tau}_{B\otimes \mathcal{W}}
(\pi_{\tau_{B\otimes \mathcal{W}}}(u_h)(V_h\otimes 1_{\pi_{\tau_{\mathcal{W}}}(\mathcal{W})^{''}})) \\
&= \tilde{\tau}_{B\otimes \mathcal{W}\otimes \mathcal{W}}((\widetilde{\mathrm{id}_{B}\otimes \varphi})
(\pi_{\tau_{B\otimes \mathcal{W}}}(u_h)(V_h\otimes 1_{\pi_{\tau_{\mathcal{W}}}(\mathcal{W})^{''}}))) \\
&= \tilde{\tau}_{B\otimes \mathcal{W}\otimes \mathcal{W}}
(\pi_{\tau_{B\otimes \mathcal{W}\otimes\mathcal{W}}}((\mathrm{id}_{B}\otimes \varphi)(u_h))
(V_h\otimes 1_{\pi_{\tau_{\mathcal{W}}}(\mathcal{W})^{''}}\otimes 1_{\pi_{\tau_{\mathcal{W}}}(\mathcal{W})^{''}}))
\end{align*}
for any $h\in N(\tilde{\beta}^{\prime})$ where $\widetilde{\mathrm{id}_{B}\otimes \varphi}$ is the 
induced isomorphism from $\pi_{\tau_{B\otimes \mathcal{W}}}(B\otimes\mathcal{W})^{''}$ onto 
$\pi_{\tau_{B\otimes \mathcal{W}\otimes\mathcal{W}}}(B\otimes\mathcal{W}\otimes\mathcal{W})^{''}$
by $\mathrm{id}_{B}\otimes \varphi$. Hence Proposition \ref{pro:coboundary} implies that 
$\beta^{\prime}$ is conjugate to $\mathrm{Ad}(w)\circ \beta^{\prime}$.  
Consequently, $\alpha$ is conjugate to $\gamma\otimes \mathrm{id}_{\mathcal{W}}$ on 
$\mathcal{W}\otimes \mathcal{W}$. 
\end{proof}

\subsection{Kirchberg's relative central sequence C$^*$-algebras}

We shall recall Kirchberg's relative central sequence C$^*$-algebras in \cite{Kir2}. 
Fix a free ultrafilter $\omega$ on $\mathbb{N}$. For a C$^*$-algebra $A$, put
$$
A^{\omega}:= \ell^{\infty}(\mathbb{N}, A)/ \{\{x_n\}_{n\in\mathbb{N}}\; |\; \lim_{n\to\omega} \|x_n\| =0 \}.
$$
Let $(x_n)_n$ denote a representative of an element in $A^{\omega}$. 
Every action $\alpha$ of a discrete group $\Gamma$ on $A$ induces an action on 
$A^{\omega}$. We denote it by the same symbol $\alpha$ unless otherwise specified. 
For a tracial state $\tau_A$ on $A$, 
we can define a tracial state $\tau_{A, \omega}$ on $A^{\omega}$ such that 
$\tau_{A, \omega}((x_n)_n)=\lim_{n\to\omega}\tau_A(x_n)$ for any $(x_n)_n\in A^{\omega}$. 
For a homomorphism $\Phi$ from a C$^*$-algebra $B$ to $A^{\omega}$, put 
$$
F(\Phi(B), A):= A^{\omega}\cap \Phi(B)^{\prime}/\{(x_n)_n\in A^{\omega}\cap \Phi(B)^{\prime}\; |\; 
(x_n)_n\Phi (b)=0\; \text{for any}\; b\in B\}. 
$$
If $B\subseteq A$ and $\Phi$ is a natural inclusion map, then we denote $F(\Phi (B), A)$ by $F(B, A)$. 
Furthermore, we denote $F(B,A)$ by $F(A)$ if $B=A$. 
If an action $\alpha$ of $\Gamma$ on $A^{\omega}$ satisfies $\alpha_g(\Phi(B))=\Phi(B)$ for any 
$g\in\Gamma$, then $\alpha$ induces an action on $F(\Phi (B), A)$. 
We denote it by the same symbol $\alpha$ unless otherwise specified. 
If an action $\alpha$ of $\Gamma$ on $A$ is cocycle conjugate $\beta$ of $\Gamma$ on $B$, then 
$\alpha$ on $F(A)$ is conjugate to $\beta$ on $F(B)$. In particular, 
if $\alpha$ is cocycle conjugate to $\beta$, then $F(A)^{\alpha}$ is isomorphic to 
$F(B)^{\beta}$. 
If $\tau_A$ is a tracial state on $A$ such that $\tau_{A, \omega}\circ \Phi$ is a state on $B$, then 
$\tau_{A, \omega}$ induces a tracial state on $F(\Phi(B), A)$ by \cite[Proposition 2.1]{Na4}. 
We denote it by the same symbol $\tau_{A, \omega}$. 

Let $A$ and $B$ be simple separable monotracial C$^*$-algebras, and 
let $\Phi$ be a trace preserving homomorphism form $B$ to $A$. Put
$$
\mathcal{M}^{\omega}(A):=\ell^{\infty}(\mathbb{N}, \pi_{\tau_{A}}(A)^{''})/ 
\{ \{x_n\}_{n=1}^{\infty}\in \ell^{\infty}(\mathbb{N}, \pi_{\tau_{A}}(A)^{''})\; |\; \lim_{n\to \omega}\| x_n\|_2
=0 \}.
$$
We also denote by $(x_n)_n$ a representative of an element in $\mathcal{M}^{\omega}(A)$. 
Define a homomorphism $\varrho$ from $A^{\omega}$ to $\mathcal{M}^{\omega}(A)$ by 
$\varrho ((x_n)_n)= (\pi_{\tau_A}(x_n))_n$ for any $(x_n)_n\in A^{\omega}$. 
Then the Kaplansky density theorem implies that $\varrho$ is surjective. 
Put 
$$
\mathcal{M}_{\omega}(\Phi(B), A):= \mathcal{M}^{\omega}(A)\cap \varrho (\Phi (B))^{\prime}.
$$
By \cite[Proposition 3.3]{Na5} and \cite[Proposition 2.1]{Na4} (see also \cite[Theorem 3.3]{KR} and 
\cite[Theorem 3.1]{MS3}), we see that $\varrho$ induces a surjective homomorphism from 
$F(\Phi(B), A)$ onto $\mathcal{M}_{\omega}(\Phi(B), A)$. We denote it by the same symbol 
$\varrho$ for simplicity. 
If $B=A$ and $\Phi(x)=(x)_n$ for any $x\in A$, then we denote $\mathcal{M}_{\omega}(\Phi(B), A)$ 
by $\mathcal{M}_{\omega}(A)$. Note that $\mathcal{M}_{\omega}(A)$ is nothing but 
the von Neumann algebraic central sequence algebra of $\pi_{\tau_{A}}(A)^{''}$. 
If an action $\alpha$ of $\Gamma$ on $A^{\omega}$ satisfies $\alpha_g(\Phi(B))=\Phi(B)$ for any 
$g\in\Gamma$, then $\alpha$ induces an action on $\mathcal{M}_{\omega}(\Phi(B), A)$. 
We denote it by $\tilde{\alpha}$ (the same as on $\pi_{\tau_{A}}(A)^{''}$). 

\subsection{Approximate cocycle morphisms and property W}
We shall recall some notions in \cite{Na5} and \cite{Na7}. 
Let $A$ and $B$ be C$^*$-algebras, and let $\alpha$ and $\beta$ 
be actions of a discrete 
group $\Gamma$ on $A$ and $B$, respectively. 
We say that a pair $(\Psi, u)$ is a \textit{sequential asymptotic cocycle morphism from 
$(A, \alpha)$ to $(B,\beta)$} if $\Psi$ is a homomorphism from $A$ to $B^{\omega}$ and 
$u$ is a map from $\Gamma$ to $U((B^{\sim})^{\omega})$ such that 
$$
\Psi \circ \alpha_g(x)= \mathrm{Ad}(u_g)\circ \beta_g \circ \Psi (x) \quad \text{and} \quad 
\Psi (x) u_{gh}= \Psi (x) u_g\beta_g(u_h)
$$
for any $x\in A$ and $g, h\in \Gamma$. 
A sequential asymptotic cocycle morphism $(\Psi, u)$ from $(A, \alpha)$ to $(A, \alpha)$ 
is said to be \textit{inner} if there exists an element $w$ in $U((A^{\sim})^{\omega})$ such that 
$$
\Psi (x)= waw^* \quad \text{and} \quad \Psi(x)u_g =\Psi (x) w\alpha_g(w^*)
$$
for any $x\in A$ and $g\in\Gamma$. 
For $\Gamma_0\subset \Gamma$, $F\subset A$ and $\varepsilon>0$, 
we say that a pair $(\varphi, u)$ is a \textit{proper $(\Gamma_0, F, \varepsilon)$-approximate 
cocycle morphism from $(A, \alpha)$ to $(B, \beta)$} if $\varphi$ is a 
completely positive map from $A$ to 
$B$ and $u$ is a map from $\Gamma$ to $U(B^{\sim})$ such that 
$$
u_{\iota}=1_{B^{\sim}}, \quad 
\| \varphi (xy)- \varphi(x)\varphi (y)\| <\varepsilon, \quad 
\| \varphi (x) (u_{gh}-u_g\beta_g (u_h))\|<\varepsilon, 
$$
and
$$
\| \varphi (\alpha_g (x))- u_g\beta_g(\varphi (x))u_g^*\| <\varepsilon
$$
for any $g\in \Gamma_0$ and $x,y\in F$. 
Let $\{\varepsilon_n\}_{n=1}^{\infty}$ be a sequence of positive real numbers such that 
$\lim_{n\to\infty}\varepsilon_n=0$, $\{\Gamma_n\}_{n=1}^{\infty}$ a sequence of finite subsets of 
$\Gamma$ such that $\bigcup_{n=1}^{\infty} \Gamma_n=\Gamma$ and $\{F_n\}_{n=1}^{\infty}$ 
a sequence of finite subsets of $A$ such that $\overline{\bigcup_{n=1}^{\infty}A_n}=A$. 
If $(\varphi_n, u_n)$ is a proper $(\Gamma_n, F_n, \varepsilon_n)$-approximate cocycle 
morphism from $(A, \alpha)$ to $(B, \beta)$, then $\{(\varphi_n, u_n)\}_{n=1}^{\infty}$ induces 
a sequential asymptotic cocycle morphism from $(A, \alpha)$ to $(B, \beta)$. 

\begin{Def} (cf. \cite[Definition 2.2]{Na7}.) \ \\
We say that an action  $\gamma$ of a countable discrete group $\Gamma$ on $\mathcal{W}$ 
has \textit{property W} if $\gamma$ satisfies the following properties: \ \\
(i) there exists a unital homomorphism from $M_{2}(\mathbb{C})$ to $F(\mathcal{W})^{\gamma}$, \ \\
(ii) if $x$ and $y$ are normal elements in $F(\mathcal{W})^{\gamma}$ such that 
$\mathrm{Sp}(x)=\mathrm{Sp}(y)$ and $0<\tau_{\mathcal{W}, \omega}(f(x))=
\tau_{\mathcal{W}, \omega}(f(y))$ for any $f\in C(\mathrm{Sp}(x))_{+}\setminus \{0\}$, then 
$x$ and $y$ are unitary equivalent in $F(\mathcal{W})^{\gamma}$. 
\end{Def}

We have the following theorem. Note that the proof of this theorem is based on techniques around 
equivariant property (SI) (see, for example,  \cite{MS}, \cite{MS2}, \cite{MS3}, \cite{Sa0}, 
\cite{Sa}, \cite{Sa2} and \cite{Sza6}).

\begin{thm}\label{thm:property-W}
(Cf. \cite[Proposition 4.2 and Theorem 4.5]{Na5}, \cite[Theorem 2.3]{Na7}.) \ \\
Let $\gamma$ be an outer action of a countable amenable group on $\mathcal{W}$. 
\ \\
(1) If $\gamma$ is a $\mathcal{W}$-absorbing action, then  
$\gamma$ has property W. \ \\
(2) Assume that $\gamma$ has property W. If $h$ is a positive element satisfying 
$d_{\tau_{\mathcal{W}, \omega}}(h)>0$, then for any $\theta\in [0, d_{\tau_{\mathcal{W}, \omega}}(h))$, 
there exists a projection $p_{\theta}$ in $\overline{hF(\mathcal{W})^{\gamma}h}$ such that 
$\tau_{\mathcal{W}, \omega}(p_{\theta})=\theta$. 
\ \\
(3) Assume that $\gamma$ has property W.  If $p$ and $q$ are projections in 
$F(\mathcal{W})^{\gamma}$ such that $0<\tau_{\mathcal{W}, \omega}(p)
=\tau_{\mathcal{W}, \omega}(q)$, then $p$ and $q$ are Murray-von Neumann equivalent. 
\end{thm}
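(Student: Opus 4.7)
The plan is to establish the three parts in turn, with part (1) being the most substantive.

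For (1), I would first exploit the definition of $\mathcal{W}$-absorbing to pass, up to cocycle conjugacy, to the model $(B \otimes \mathcal{W}, \beta \otimes \mathrm{id}_{\mathcal{W}})$ for some action $\beta$. Since cocycle conjugate actions give isomorphic fixed point algebras of central sequence algebras (as recalled in the preliminaries), we may identify $F(\mathcal{W})^{\gamma}$ with $F(B \otimes \mathcal{W})^{\beta \otimes \mathrm{id}}$. Central sequences supported in the second tensor factor are automatically fixed, so there is a natural unital homomorphism $F(\mathcal{W}) \to F(\mathcal{W})^{\gamma}$. Since $\mathcal{W}$ is $\mathcal{Z}$-stable, $F(\mathcal{W})$ contains a unital copy of $\mathcal{Z}$, hence of $M_{2}(\mathbb{C})$, and composing with the inclusion above yields property W(i). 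For property W(ii), I would adapt the argument of \cite[Proposition 4.2]{Na5}, which classifies normal elements in $F(\mathcal{W})$ by spectrum and trace data. Given normal elements $x, y \in F(\mathcal{W})^{\gamma}$ with matching spectrum and trace distributions, I would use a reindexation trick combined with the inclusion $F(\mathcal{W}) \hookrightarrow F(\mathcal{W})^{\gamma}$ coming from the second tensor factor to reduce to comparing normal elements inside a copy of $F(\mathcal{W})$ on which $\gamma$ acts trivially; the non-equivariant classification then produces a unitary, which is automatically $\gamma$-invariant.

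For (3), given projections $p, q \in F(\mathcal{W})^{\gamma}$ with $0 < \tau_{\mathcal{W}, \omega}(p) = \tau_{\mathcal{W}, \omega}(q)$, we may assume both have spectrum $\{0, 1\}$ (handling the unit case using property W(i) to split off a nontrivial subprojection), and the trace equality $\tau_{\mathcal{W}, \omega}(f(p)) = \tau_{\mathcal{W}, \omega}(f(q))$ for every $f \in C(\{0,1\})_{+} \setminus \{0\}$ follows from linearity. Property W(ii) then produces a unitary $u \in F(\mathcal{W})^{\gamma}$ with $u p u^{*} = q$, so $v := u p$ is a partial isometry in $F(\mathcal{W})^{\gamma}$ implementing Murray--von Neumann equivalence.

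For (2), I would combine property W(i) with equivariant property (SI)-type techniques (compare \cite{MS}, \cite{Sa}, \cite{Sa2}, \cite{Sza6}) to produce inside $\overline{h F(\mathcal{W})^{\gamma} h}$ a positive element whose spectral measure with respect to $\tau_{\mathcal{W}, \omega}$ is continuous and has total mass close to $d_{\tau_{\mathcal{W}, \omega}}(h)$. Property W(ii) would then let me replace it by a positive element $k$ whose spectrum is an interval and whose spectral measure is uniform (up to scaling by $d_{\tau_{\mathcal{W}, \omega}}(h)$), so that for each $\theta \in [0, d_{\tau_{\mathcal{W}, \omega}}(h))$ an appropriate spectral projection $\chi_{[s, \infty)}(k)$ realizes trace value $\theta$. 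The hardest step will be the verification of property W(ii) in part (1): although the non-equivariant classification of normal elements in $F(\mathcal{W})$ from \cite{Na5} is available, arranging for the classifying unitary to be $\gamma$-invariant requires carefully combining it with the reindexation trick described above, which in turn relies on equivariant $\mathcal{Z}$-absorption results in the style of \cite{Sza6}.
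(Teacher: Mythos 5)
The paper does not actually prove Theorem \ref{thm:property-W}; it imports it from \cite[Proposition 4.2 and Theorem 4.5]{Na5} and \cite[Theorem 2.3]{Na7}, recording only that the argument rests on techniques around equivariant property (SI). Measured against that method, your verification of W(i) via the second tensor factor of $B\otimes\mathcal{W}$ and your deduction of (3) from W(ii) are essentially the standard route (though the case $\tau_{\mathcal{W},\omega}(p)=1$ needs more than ``split off a subprojection'': the trace is not faithful on $F(\mathcal{W})^{\gamma}$, so a trace-one projection need not be the unit, and one has to handle the defect projections $1-p,\,1-q\in\mathrm{ker}\,\varrho|_{F(\mathcal{W})^{\gamma}}$ separately, compare Proposition \ref{pro:small-projection}). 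The serious gap is your verification of W(ii) in part (1). A normal element of $F(B\otimes\mathcal{W})^{\beta\otimes\mathrm{id}_{\mathcal{W}}}$ is represented by an approximately invariant central sequence in $B\otimes\mathcal{W}$ that has no reason to lie, even approximately, in the second tensor factor; conjugating it into the canonical copy of $F(\mathcal{W})$ by a unitary of the fixed point algebra is itself an instance of the equivariant uniqueness statement you are trying to prove, so the proposed reduction is circular. Reindexation gives you \emph{existence} of a normal element in that sub-copy with the prescribed spectrum and trace data, but not \emph{uniqueness}. The actual argument in \cite{Na5} and \cite{Na7} works directly in $F(\mathcal{W})^{\gamma}$: one shows that $\mathcal{M}_{\omega}(\mathcal{W})^{\tilde{\gamma}}$ is a II$_1$ factor (the inner part $N(\tilde{\gamma})$ acts trivially on $\mathcal{M}_{\omega}(\mathcal{W})$ and the induced quotient action is centrally free, so Ocneanu's results \cite{Oc} apply), uses equivariant property (SI) \cite{Sza6} to lift comparison from the tracial fixed point algebra to strict comparison of positive elements in $F(\mathcal{W})^{\gamma}$, and then reruns the classification of normal elements of \cite{Na4} inside $F(\mathcal{W})^{\gamma}$ itself. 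This is exactly the pattern of Propositions \ref{pro:strict-comparison-central} and 5.1 of the present paper.

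There is a second genuine gap in your sketch of (2). A positive element $k$ with connected spectrum and diffuse spectral measure generates a C$^*$-algebra with no nontrivial projections, so $\chi_{[s,\infty)}(k)$ is not an element of $\overline{hF(\mathcal{W})^{\gamma}h}$ and cannot realize the trace value $\theta$; spectral projections only exist after passing to a von Neumann closure, which destroys membership in the hereditary subalgebra. The correct mechanism is again comparison: iterate W(i) (using reindexation, or the cocycle conjugacy $\gamma\sim\gamma\otimes\mathrm{id}_{M_{2^{\infty}}}$ as in the proof of Theorem \ref{thm:app-rep}) to obtain a unital copy of $M_{2^{\infty}}$ in $F(\mathcal{W})^{\gamma}$, hence projections of every dyadic trace and, by a diagonal argument over $\omega$, of every trace $\theta\in[0,1]$; then use the strict comparison described above to find $r$ with $r^{*}hr$ equal to such a projection, which transports a projection of trace $\theta<d_{\tau_{\mathcal{W},\omega}}(h)$ into $\overline{hF(\mathcal{W})^{\gamma}h}$.
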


\begin{pro}\label{pro:small-projection}
Let $\gamma$ be an outer action of a countable discrete group $\Gamma$ on $\mathcal{W}$, 
and let $F$ be a finite subset of $\mathrm{ker}\; \varrho|_{F(\mathcal{W})^{\gamma}}$. 
If $\gamma$ has property W, then there exists a projection $p$ in 
$\mathrm{ker}\;\varrho|_{F(\mathcal{W})^{\gamma}}$ such that $px=xp=x$ for any $x\in F$. 
\end{pro}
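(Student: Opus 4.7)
Set $h := \sum_{x \in F}(xx^* + x^*x) \in \ker\varrho \cap F(\mathcal{W})^\gamma$. A projection $p$ in $\ker\varrho \cap F(\mathcal{W})^\gamma$ satisfying $ph = hp = h$ would suffice: the inequality $xx^* \le h$ gives $(1-p)xx^*(1-p) \le (1-p)h(1-p) = 0$, which forces $(1-p)x = 0$, i.e.\ $px = x$, and $xp = x$ follows by the symmetric argument applied to $x^*x \le h$.

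The obstacle to applying Theorem \ref{thm:property-W}(2) directly to $h$ is that $d_{\tau_{\mathcal{W},\omega}}(h) = 0$: since $\tau_{\mathcal{W},\omega}(h^2) = 0$, H\"older's inequality gives $\tau_{\mathcal{W},\omega}(h^{1/k}) \le \tau_{\mathcal{W},\omega}(h^2)^{1/(2k)} = 0$ for every $k \ge 1$. The trick is to apply the theorem instead to the complementary element $1 - q_\varepsilon$, where $q_\varepsilon := g_\varepsilon(h)$ and $g_\varepsilon\colon [0,\|h\|] \to [0,1]$ is the continuous cutoff that is $0$ at $0$, equals $1$ on $[\varepsilon, \|h\|]$, and is linear on $[0, \varepsilon]$. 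Then $q_\varepsilon \in \ker\varrho \cap F(\mathcal{W})^\gamma$, $\|(1-q_\varepsilon)^{1/k} h\| \le \varepsilon$ for all $k \ge 1$ by spectral calculus, and $d_{\tau_{\mathcal{W},\omega}}(1-q_\varepsilon) = 1$. Theorem \ref{thm:property-W}(2) applied to $1-q_\varepsilon$ with $\theta = 1 - \varepsilon$ yields a projection $e_\varepsilon \in \overline{(1-q_\varepsilon) F(\mathcal{W})^\gamma (1-q_\varepsilon)}$ with $\tau_{\mathcal{W},\omega}(e_\varepsilon) = 1 - \varepsilon$, and $p_\varepsilon := 1 - e_\varepsilon$ is a projection in $F(\mathcal{W})^\gamma$ with $\tau_{\mathcal{W},\omega}(p_\varepsilon) = \varepsilon$. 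Using the approximate identity $\{(1-q_\varepsilon)^{1/k}\}_k$ of the hereditary subalgebra together with the uniform bound above, one obtains $\|e_\varepsilon h\| \le \varepsilon$ and hence $\|p_\varepsilon h - h\|_{F(\mathcal{W})^\gamma} \le \varepsilon$.

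To conclude, I would execute a standard reindexing argument along $\omega$. Fix a countable dense subset $\{a_k\} \subset \mathcal{W}$ and an exhausting sequence $\Gamma_1 \subset \Gamma_2 \subset \cdots$ of finite subsets of $\Gamma$; choose representatives $(p_{1/n, m})_m$ of $p_{1/n}$ and $(h_m)_m$ of $h$, and for each $n$ pick $m_n$ along $\omega$ so that $(p_{1/n, m_n})_n$ simultaneously satisfies, to tolerance $1/n$, the projection relation, commutation with $a_1,\dots,a_n$, $\gamma$-invariance on $\Gamma_n$, $|\tau_{\mathcal{W}}(p_{1/n,m_n}) - 1/n| \le 1/n^2$, and $\|p_{1/n, m_n} h_{m_n} a_k - h_{m_n} a_k\| \le 2/n$ for $k \le n$. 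The resulting diagonal class $p$ lies in $F(\mathcal{W})^\gamma$, is a projection with $\tau_{\mathcal{W},\omega}(p) = \lim_{n \to \omega} 1/n = 0$ (so $p \in \ker\varrho$), and satisfies $(ph - h)a_k = 0 = (hp - h)a_k$ in $\mathcal{W}^\omega$ for every $k$, hence $ph = hp = h$ in $F(\mathcal{W})^\gamma$.

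The main conceptual obstacle is that projections in $\ker\varrho$ are forced to have trace zero, whereas Theorem \ref{thm:property-W}(2) only produces projections of strictly positive trace; the workaround is to apply that theorem to the complement $1 - q_\varepsilon$ rather than to $h$ or $q_\varepsilon$. The technical obstacle is the final diagonalization, which is a routine application of the reindexing and saturation properties available for Kirchberg's central sequence algebras.
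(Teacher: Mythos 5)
Your argument is correct and follows the same skeleton as the paper's proof (reduce to the single positive element $h:=\sum_{x\in F}(x^*x+xx^*)$, apply Theorem \ref{thm:property-W}(2) to a ``complement'' of $h$ to get $\gamma$-fixed projections of trace close to $1$ annihilating $h$, then diagonalize), but you implement the middle step differently. The paper first invokes \cite[Lemma 4.4]{MS2} to produce a positive contraction $k\in\mathrm{ker}\,\varrho|_{F(\mathcal{W})^{\gamma}}$ with $kh=h$ \emph{exactly}; every projection in $\overline{(1_{F(\mathcal{W})}-k)F(\mathcal{W})^{\gamma}(1_{F(\mathcal{W})}-k)}$ then kills $h$ on the nose, so the final diagonal argument only has to push the trace of the complementary projection to $0$. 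You replace that lemma by the elementary functional-calculus cutoff $q_{\varepsilon}=g_{\varepsilon}(h)$ and work with $1-q_{\varepsilon}$, which avoids the citation but yields only the approximate relation $\|p_{\varepsilon}h-h\|\leq\varepsilon$; your diagonalization must therefore upgrade the trace condition and the unit relation simultaneously. Both routes are valid: the paper buys exactness early, yours buys self-containedness at the cost of a slightly heavier saturation step at the end.

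Two small points, neither a genuine gap. First, you use twice that positive elements of $\mathrm{ker}\,\varrho$ have vanishing trace (for $d_{\tau_{\mathcal{W},\omega}}(1-q_{\varepsilon})=1$, and to conclude that a projection of trace $0$ lies in $\mathrm{ker}\,\varrho$); this holds because $\mathrm{ker}\,\varrho$ is precisely the trace-kernel ideal, and is worth recording. Second, as literally written your last step controls $\|(p_{1/n,m_n}h_{m_n}-h_{m_n})a_k\|$, i.e.\ the product of $p$ with the reindexed sequence $(h_{m_n})_n$, which need not represent the same class as the fixed representative $(h_m)_m$ of $h$. This is repaired by phrasing the diagonalization as Kirchberg's $\varepsilon$-test with the test functions built from the fixed $(h_m)_m$ and the $a_k$; that is presumably also what the paper's unadorned ``diagonal argument'' means.
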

\begin{proof}
Put $f:=\sum_{x\in F}(x^*x+xx^*)\in \mathrm{ker}\; \varrho|_{F(\mathcal{W})^{\gamma}}$. 
It suffices to show that there exists a projection 
$p$ in $\mathrm{ker}\;\varrho|_{F(\mathcal{W})^{\gamma}}$ such that $pf=f$. 
By (the same argument as in the proof of) \cite[Lemma 4.4]{MS2}, there exists a positive contraction 
$h$ in $\mathrm{ker} \varrho|_{F(\mathcal{W})^{\gamma}}$ such that $hf=f$. 
Since we have $d_{\tau_{\mathcal{W}, \omega}}(1_{F(\mathcal{W})}-h)=1$, for any 
$0<\varepsilon<1$, there exists a 
projection $q_{\varepsilon}$ in $\overline{(1_{F(\mathcal{W})}-h)F(\mathcal{W})^{\gamma}
(1_{F(\mathcal{W})}-h)}$ such that 
$\tau_{\mathcal{W}, \omega}(q_{\varepsilon})=1-\varepsilon$ by 
Theorem \ref{thm:property-W}. Note that we have 
$q_{\varepsilon}f=0$ for any $0<\varepsilon<1$. By the diagonal argument, we see that there exists a 
projection $q$ in $F(\mathcal{W})^{\gamma}$ such that $\tau_{\mathcal{W}, \omega}(q)=1$ and 
$qf=0$. Put $p:=1_{F(\mathcal{W})}-q$. Then we obtain the conclusion. 
\end{proof}

\section{Model actions}\label{sec:mod}

In this section we shall construct model actions. 
First, we shall construct model actions in the classification for outer 
$\mathcal{W}$-absorbing actions up to cocycle conjugacy.  
The following lemma is a generalization of \cite[Lemma 3.2]{Na6}.

\begin{lem}\label{lem:key-model}
Let $\Gamma$ be a finite group. For any real number $r$ with $0<r<1$, there exist a simple 
unital monotracial approximately finite-dimensional (AF) algebra $A$ and a unitary representation 
$V$ of $N$ on $\pi_{\tau_A}(A)^{''}$ such that $\mathrm{Ad}(V)$ induces an outer action of $\Gamma$ 
on $A$ and 
$\tilde{\tau}_A(V_g)=r$ for any $g\in N\setminus\{\iota\}$. 
\end{lem}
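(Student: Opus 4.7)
The construction will be an inductive-limit analogue of \cite[Lemma~3.2]{Na6}. First, choose nonnegative integers $a_n,b_n$ so that $r_n:=a_n/(a_n+|\Gamma|b_n)\to r$ with $\sum_n\sqrt{|r_{n+1}-r_n|}<\infty$, and such that the resulting dimension group of the eventual inductive limit will not contain $(1+r)/2$ (nor the traces of the other spectral projections of the limit unitaries). Put $d_n=a_n+|\Gamma|b_n$, $A_n=M_{d_n}(\mathbb{C})$, and let $\pi_n:\Gamma\to U(A_n)$ be the direct sum of $a_n$ copies of the trivial representation and $b_n$ copies of the left regular representation; set $V_g^{(n)}:=\pi_n(g)$, a unitary of normalized trace $r_n$ for every $g\ne\iota$.

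Design unital trace-preserving $*$-homomorphisms $\phi_n:A_n\to A_{n+1}$ and inner unitaries $u_n\in U(A_{n+1})$ such that $u_n\phi_n(V_g^{(n)})u_n^*=V_g^{(n+1)}$ for all $g\in\Gamma$, with $\|1-u_n\|_2$ summable. This is possible because the characters of $\phi_n\circ\pi_n$ and $\pi_{n+1}$ differ by $O(|r_{n+1}-r_n|)$ on $\Gamma\setminus\{\iota\}$, so the inner unitary intertwining their isotypical components may be chosen close to $1$ in $\|\cdot\|_2$. Replacing $\phi_n$ by $\mathrm{Ad}(u_n)\circ\phi_n$, we may assume strict equivariance $\phi_n(V_g^{(n)})=V_g^{(n+1)}$. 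Choosing a primitive equivariant Bratteli diagram ensures that $A:=\varinjlim(A_n,\phi_n)$ is a simple unital monotracial AF algebra.

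Viewing each $V_g^{(n)}$ as an element of $A\subset\pi_{\tau_A}(A)''$, summability of the perturbations makes $(V_g^{(n)})_n$ Cauchy in $\|\cdot\|_2$ with strong limit $V_g\in U(\pi_{\tau_A}(A)'')$. Passing to the limit in the identities $V_g^{(n)}V_h^{(n)}=V_{gh}^{(n)}$ and $V_g^{(n)}A_nV_g^{(n)*}=A_n$, one sees that $V:\Gamma\to U(\pi_{\tau_A}(A)'')$ is a unitary representation with $\mathrm{Ad}(V_g)(A)=A$ and $\tilde\tau_A(V_g)=\lim_n r_n=r$ for $g\ne\iota$.

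The main obstacle is outerness of $\mathrm{Ad}(V)|_A$. Suppose $\mathrm{Ad}(V_g)|_A=\mathrm{Ad}(w)|_A$ for some $w\in U(A)$. Then $V_gw^*\in A'\cap\pi_{\tau_A}(A)''=\mathbb{C}\cdot 1$, since $A$ is simple and $\pi_{\tau_A}(A)''$ is therefore a factor, which forces $V_g\in\mathbb{T}\cdot U(A)\subset A$. It thus suffices to guarantee $V_g\notin A$. This is where the ``forbidden value'' constraint on $K_0(A)$ enters: the spectral data of $V_g$ (for instance, the $+1$-eigenprojection $(1+V_g)/2$, of trace $(1+r)/2$, when $g$ has order $2$, and the analogous character-theoretic data otherwise) forces $V_g$ to encode a projection whose trace lies outside $\tau_A(K_0(A))$, so $V_g\notin A$. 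The principal technical difficulty is calibrating the Bratteli diagram so as to achieve simultaneously simplicity, monotraciality, strict equivariance of the $\phi_n$, summable perturbations, and avoidance of the finitely many spectral trace values for every $g\in\Gamma\setminus\{\iota\}$ at once.
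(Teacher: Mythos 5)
There is a genuine gap, and it sits at the heart of the construction. Your step ``Replacing $\phi_n$ by $\mathrm{Ad}(u_n)\circ\phi_n$, we may assume strict equivariance $\phi_n(V_g^{(n)})=V_g^{(n+1)}$'' cannot be carried out: $\phi_n\circ\pi_n$ is unitarily equivalent to $m_n\pi_n$ (where $m_n=d_{n+1}/d_n$), whose character at $g\neq\iota$ is $m_na_n$, while $\pi_{n+1}$ has character $a_{n+1}$ there; these are different integers whenever $r_{n+1}\neq r_n$, so no unitary exactly intertwines the two representations. Worse, if strict equivariance \emph{did} hold, every $V_g^{(n)}$ would define the \emph{same} element of the inductive limit $A$, so $V_g$ would lie in $U(A)$ and $\mathrm{Ad}(V)$ would be an inner action --- the opposite of what you need. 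What you can actually achieve is only approximate intertwining, $\|\phi_n(V_g^{(n)})-V_g^{(n+1)}\|_2\lesssim\sqrt{|r_{n+1}-r_n|}$, and then your summability hypothesis does give a strong-$*$ limit $V_g\in U(\pi_{\tau_A}(A)'')$ with $\tilde\tau_A(V_g)=r$. But at that point the assertion that $\mathrm{Ad}(V_g)$ preserves $A$ is unjustified: for $x\in A_n$ the conjugates $V_g^{(m)}xV_g^{(m)*}$ converge only in $\|\cdot\|_2$, not in norm, because the increments $V_g^{(m+1)}V_g^{(m)*}$ are close to $1$ in $\|\cdot\|_2$ but not in operator norm and do not commute with $A_n$. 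This is precisely the point the paper's construction is built to handle: there one takes $A=\bigotimes_{n}\bigl(B\otimes\mathbb{K}(\ell^2(\Gamma))\bigr)$ with $u_{g,n}=p_n\otimes 1+(1_B-p_n)\otimes\lambda_g^{\Gamma}$ and $w_{g,n}=u_{g,1}\otimes\cdots\otimes u_{g,n}\otimes 1\otimes\cdots$, so that the increment $w_{g,m+1}w_{g,m}^*$ lives in the $(m+1)$-st tensor factor and commutes with the first $n$ factors; hence $\bigotimes_n\mathrm{Ad}(u_{g,n})$ is a genuine automorphism of $A$ while the partial products converge only $\sigma$-strongly to a unitary $V_g$ outside $A$, with $\tilde\tau_A(V_g)=\prod_n r^{1/2^n}=r$.

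Your outerness argument is also a different route from the paper's, and it is the part of your plan you explicitly leave uncalibrated. The reduction ``$\mathrm{Ad}(V_g)|_A=\mathrm{Ad}(w)|_A$ forces $V_g\in\mathbb{T}\cdot U(A)$'' is correct, and the idea of excluding this via the traces of the spectral projections of $V_g$ (which equal $\frac{1-r}{k}$ and $\frac{1-r}{k}+r$ for $k$ the order of $g$) is viable in principle; but for rational $r$ it requires a delicate arithmetic choice of the supernatural number of your UHF-type limit, in tension with the requirement $a_n/d_n\to r$, and you do not carry this out. The paper sidesteps all of this: outerness is proved by exhibiting an element $e_g$ of the central sequence algebra $F(A)$ with $\alpha_g(e_g)\neq e_g$ (built from the matrix unit $1_B\otimes e_{g,g}^{\Gamma}$ in the $n$-th tensor factor), since inner automorphisms of a unital C$^*$-algebra act trivially on $F(A)$. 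I would recommend reworking your construction around a tensor-product (rather than matrix-inductive-limit) picture so that the conjugating unitaries telescope with increments asymptotically central in norm; as written, the proposal does not produce a well-defined outer action on $A$.
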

\begin{proof}
By the Effros-Handelman-Shen theorem \cite{EHS} (or \cite[Theorem 2.2]{Ell-order}), 
there exists a simple unital monotracial AF algebra $B$ such that 
$$
K_0(B)=\left\{a_0+\sum_{n=1}^{m}a_n r^{\frac{1}{2^n}}\in\mathbb{R}\; |\; m\in\mathbb{N}, a_0,a_1,...,a_m\in
\mathbb{Z} \right\},
$$
$K_0(B)_{+}=K_0(B)\cap\mathbb{R}_{+}$ and $[1_{B}]_0=1$. 
For any $n\in\mathbb{N}$, there exists a projection $p_n$ in $B$ such that 
$\tau_{B}(p_n)=r^{\frac{1}{2^n}}$. For any  $g\in \Gamma$ and $n\in\mathbb{N}$, 
put
$$
u_{g, n}:= p_n\otimes 1_{ \mathbb{K}(\ell^2(\Gamma))}
 + (1_{B}-p_n)\otimes \lambda_g^{\Gamma}\in B\otimes \mathbb{K}(\ell^2(\Gamma))
$$
where $\lambda^{\Gamma}$ is the left regular representation of $\Gamma$. 
Then $u_{g,n}$ is a unitary element such that 
$\tau_{B\otimes \mathbb{K}(\ell^2(\Gamma))}(u_{g,n})=r^{\frac{1}{2^n}}$ for any 
$g\in \Gamma\setminus\{\iota\}$ and $n\in\mathbb{N}$ 
because we have $\tau_{\mathbb{K}(\ell^2(\Gamma))}(\lambda^{\Gamma}_g)=0$ for any 
$g\in \Gamma\setminus \{\iota\}$. 
Put 
$$
A:=\bigotimes_{n=1}^{\infty} B\otimes \mathbb{K}(\ell^2(\Gamma)).
$$
Then $A$ is a simple unital monotracial AF algebra. 
For any $g\in \Gamma$ and $n\in\mathbb{N}$, put
$$
w_{g, n}:= u_{g, 1}\otimes u_{g, 2}\otimes \cdots \otimes u_{g, n}\otimes 
1_{B\otimes \mathbb{K}(\ell^2(\Gamma))}\otimes \cdots \in 
\bigotimes_{n=1}^{\infty} B\otimes \mathbb{K}(\ell^2(\Gamma))=A.
$$
The same argument as in the proof of \cite[Lemma 3.2]{Na6} 
shows that for any $g\in \Gamma$, there exists a unitary element $V_g$ in 
$\pi_{\tau_A}(A)^{''}$ such that 
$\{\pi_{\tau_A}(w_{g,n})\}_{n\in\mathbb{N}}$ converges to $V_{g}$ in the strong-$^*$topology. 
It is easy to see that the map $V$ defined by $\Gamma\ni g\mapsto V_g\in 
U(\pi_{\tau_A}(A)^{''})$ is a unitary representation of $\Gamma$ on $\pi_{\tau_A}(A)^{''}$ such that 
$\tilde{\tau}_A(V_g)=r$ for any $g\in N\setminus\{\iota\}$. 
Furthermore, it can be easily checked that $\mathrm{Ad}(V)$ induces an action 
$\alpha$ of $\Gamma$ on $A$. 
Indeed, we have 
$$
\alpha_g(x)= \bigotimes_{n=1}^{\infty} \mathrm{Ad}(w_{g,n})(x) 
$$
for any $x\in A$ and $g\in\Gamma$. For any $g\in \Gamma$, put
$$
e_g:= [(\overbrace{1_{B\otimes \mathbb{K}(\ell^2(\Gamma))}\otimes \cdots \otimes 
1_{B\otimes \mathbb{K}(\ell^2(\Gamma))}}^{n-1}\otimes 
(1_{B}\otimes e_{g,g}^{\Gamma})\otimes 1_{B\otimes \mathbb{K}(\ell^2(\Gamma))} \otimes \cdots)_n]
\in F(A).
$$ 
If $g\in\Gamma\setminus\{\iota\}$, then we have $\| (1_{B}-p_n)\otimes (e_{g^2,g^2}-e_{g,g})\|=
\| e_{g^2,g^2}-e_{g,g}\|=1$ for any $n\in\mathbb{N}$.  
Hence 
\begin{align*}
\alpha_g(e_g)-e_g 
= [(1_{B\otimes \mathbb{K}(\ell^2(\Gamma))}\otimes \cdots \otimes  
((1_B-p_n)\otimes (e_{g^2,g^2}-e_{g,g}))\otimes \cdots)_n]\neq 0
\end{align*}
for any $g\in\Gamma\setminus\{\iota\}$. 
Therefore $\alpha_g$ is not a trivial automorphism of $F(A)$ for any 
$g\in\Gamma\setminus\{\iota\}$. 
Consequently, $\alpha$ is an outer action on $A$. 
\end{proof}

Let $M_{n^{\infty}}$ denote the uniformly hyperfinite (UHF) algebra of type $n^{\infty}$. 
The following proposition is an analogous proposition of \cite[Proposition 1.5.8]{Jones}.

\begin{pro}\label{model:cocycle-conjugacy}
Let $\Gamma$ be a finite group, and let $N$ be a normal subgroup of $\Gamma$. 
For any $\Lambda\in \Lambda(\Gamma, N)$, there exists a simple unital monotracial AF algebra 
$A_{(\Gamma, N, \Lambda)}$ and an outer action $S^{(\Gamma, N, \Lambda)}$ of 
$\Gamma$ on $A_{(\Gamma, N, \Lambda)}$ such that
$$
N(\tilde{S}^{(\Gamma, N, \Lambda)})=N \quad \text{and} \quad 
\Lambda (\tilde{S}^{(\Gamma, N, \Lambda)})=
\Lambda.
$$ 
\end{pro}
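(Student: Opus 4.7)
The plan is to adapt Lemma \ref{lem:key-model} so that the resulting outer $\Gamma$-action on a simple unital monotracial AF algebra realises a prescribed pair $(N, \Lambda)$. The template is Jones' Proposition~1.5.8 for the hyperfinite II$_1$ factor, translated to the AF setting. Fix a representative $(\lambda, \mu) \in Z(\Gamma, N)$ of $\Lambda$.

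First I build the finite-dimensional projective data on $\ell^2(N)$. Let $\tilde v_h \in U(\ell^2(N))$ be the $\mu$-twisted regular representation $(\tilde v_h \xi)(k) := \mu(h, h^{-1} k) \xi(h^{-1} k)$, satisfying $\tilde v_h \tilde v_l = \mu(h, l) \tilde v_{hl}$, and set $(U_g \xi)(k) := \lambda(g, k) \xi(g^{-1} k g)$. A direct calculation using axioms (i)--(iv) of $Z(\Gamma, N)$ shows that $U$ is a true unitary representation of $\Gamma$ on $\ell^2(N)$ and that
$$U_g \tilde v_h U_g^{*} = \lambda(g, g h g^{-1}) \tilde v_{g h g^{-1}}.$$
Thus $\mathrm{Ad}(U)$ is a $\Gamma$-action on $M_{|N|}(\mathbb{C})$ extending $\psi^{\lambda}$ on the subalgebra $\mathbb{C}_\mu N \subset M_{|N|}(\mathbb{C})$.

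Next, I apply Lemma \ref{lem:key-model} to the quotient group $\Gamma/N$ to obtain a simple unital monotracial AF algebra $C$ carrying an outer action $\bar\gamma$ of $\Gamma/N$; pulling back along $\Gamma \to \Gamma/N$ gives a $\Gamma$-action $\gamma$ on $C$ with $\gamma_h = \mathrm{id}_C$ for $h \in N$ and $\tilde\gamma_g$ outer for $g \notin N$. Set $A_{(\Gamma, N, \Lambda)} := C \otimes M_{|N|}(\mathbb{C})$ and $S^{(\Gamma, N, \Lambda)}_g := \gamma_g \otimes \mathrm{Ad}(U_g)$, which is an outer $\Gamma$-action on a simple unital monotracial AF algebra satisfying $N(\tilde S) = N$: for $g \notin N$ the outerness of $\tilde\gamma_g$ makes $\tilde S_g$ outer, while for $h \in N$ the automorphism $\tilde S_h$ is inner. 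Putting $V_h := 1_C \otimes \tilde v_h$ gives unitaries in $A_{(\Gamma, N, \Lambda)}$ with $V_h V_l = \mu(h, l) V_{hl}$, and Step~1 yields the $\lambda$-equivariance $\tilde S_g(V_{g^{-1} h g}) = \lambda(g, h) V_h$.

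The hard part is the compatibility $\tilde S_h = \mathrm{Ad}(V_h)$ needed for $V_h$ to actually compute the characteristic invariant. With the naive definitions above one has $\tilde S_h = \mathrm{Ad}(1 \otimes U_h)$, which agrees with $\mathrm{Ad}(1 \otimes \tilde v_h)$ only on the subalgebra $1_C \otimes \mathbb{C}_\mu N$, because $U_h$ and $\tilde v_h$ differ by a non-scalar element of the commutant of $\mathbb{C}_\mu N$ inside $M_{|N|}$; with the implementer $1 \otimes U_h$ the characteristic invariant of $\tilde S$ is trivial rather than $\Lambda$. Resolving this—the technical heart of the construction—requires modifying the $\Gamma$-action on the second tensor factor so that its restriction to $N$ is implemented by the $\mu$-projective $\tilde v_h$'s themselves, either by replacing $M_{|N|}(\mathbb{C})$ with $\mathbb{C}_\mu N$ (and compensating for its non-simplicity by an appropriate matrix envelope) or by splitting the factor so as to separate the $\mu$-twist from the $\Gamma/N$-extension. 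A direct translation of the $p_n \otimes 1 + (1 - p_n) \otimes \lambda_g^\Gamma$ trick from Lemma \ref{lem:key-model} also fails for the $\mu$-projective case: the analogous $v_{h, n} := p_n \otimes 1 + (1 - p_n) \otimes \tilde v_h$ satisfies $v_{h, n} v_{l, n} - \mu(h, l) v_{hl, n} = (1 - \mu(h, l))\, p_n \otimes 1$, which is non-zero whenever $\mu(h, l) \neq 1$. Reconciling the $\mu$-projective implementing relations with simplicity of $A_{(\Gamma, N, \Lambda)}$ and with the $\lambda$-equivariance built into the $\Gamma$-action is the principal obstacle.
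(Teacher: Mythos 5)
There is a genuine gap, and you identify it yourself in the final paragraph: the construction as written does not produce an action whose characteristic invariant is $\Lambda$. For $h\in N$ your automorphism $\tilde S_h$ is $\mathrm{Ad}(1_C\otimes U_h)$, and $h\mapsto 1_C\otimes U_h$ is a \emph{genuine} unitary representation of $N$ commuting appropriately with the $\Gamma$-action, so $\Lambda(\tilde S)$ is trivial regardless of $\mu$; the unitaries $V_h=1_C\otimes\tilde v_h$ satisfy the $\mu$-relations but do not implement $\tilde S_h$, since $U_h$ and $\tilde v_h$ are not proportional in the factor $M_{|N|}(\mathbb{C})$. Naming this obstacle and listing possible directions ("replace $M_{|N|}(\mathbb{C})$ by $\mathbb{C}_\mu N$ and compensate for non-simplicity", "split the factor") is not a proof. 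There is also a second, independent failure: since $\gamma_h=\mathrm{id}_C$ for $h\in N$, the C$^*$-automorphism $S_h=\mathrm{Ad}(1_C\otimes U_h)$ is inner for every $h\in N$, so $N(S^{(\Gamma,N,\Lambda)})\supseteq N$ and your action is not outer, contradicting the statement. Applying Lemma \ref{lem:key-model} only to $\Gamma/N$ cannot repair this, because it produces nothing that acts nontrivially for elements of $N$.

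The paper resolves both problems at once. The $\mu$-twist is realised by taking the \emph{reduced twisted crossed product} $B=M_{|\Gamma|^{\infty}}\rtimes_{\gamma|_N,\mu}N$ of a product-type action $\gamma=\bigotimes\mathrm{Ad}(\lambda^{\Gamma})$: the canonical unitaries $\lambda^{\gamma}_h\in B$ satisfy $\lambda^{\gamma}_h\lambda^{\gamma}_k=\mu(h,k)\lambda^{\gamma}_{hk}$ by construction and genuinely implement $\beta_h$, where $\beta_g(\sum_h x_h\lambda^{\gamma}_h)=\sum_h\lambda(g,h)\gamma_g(x_{g^{-1}hg})\lambda^{\gamma}_h$ builds in the $\lambda$-equivariance; simplicity and uniqueness of trace follow from outerness of $\tilde{\gamma}|_N$ (B\'edos, Kishimoto), and $B$ is AF because $\gamma|_N$ is product type. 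This is exactly the "matrix envelope of $\mathbb{C}_\mu N$" you gesture at, made precise. Outerness at the C$^*$-level is then restored by tensoring with the Lemma \ref{lem:key-model} action $\alpha=\mathrm{Ad}(V)$ of the \emph{full} group $\Gamma$: each $\alpha_g$, $g\neq\iota$, is outer on $A$ (it acts nontrivially on $F(A)$) even though $\tilde{\alpha}_g=\mathrm{Ad}(V_g)$ is inner on $\pi_{\tau_A}(A)''$, so $S=\alpha\otimes\beta$ has $N(S)=\{\iota\}$ while $N(\tilde S)=N$ and $\Lambda(\tilde S)=[(\lambda,\mu)]$ is computed from $V'_h=V_h\otimes\pi_{\tau_B}(\lambda^{\gamma}_h)$. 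Your proposal is missing both of these ingredients.
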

\begin{proof}
Choose a pair $(\lambda, \mu)\in Z(\Gamma, N)$ such that $[(\lambda, \mu)]=\Lambda$. 
Define an action $\gamma$ of $\Gamma$ on $M_{|\Gamma|^{\infty}}\cong \bigotimes_{n=1}^{\infty}
\mathbb{K}(\ell^2(\Gamma))$ by 
$$
\gamma_g:= \bigotimes_{n=1}^{\infty} \mathrm{Ad}(\lambda_g^{\Gamma})
$$
for any $g\in \Gamma$ where $\lambda^{\Gamma}$ is the left regular representation of $\Gamma$. 
Let $B$ be the (reduced) twisted crossed product C$^*$-algebra 
$M_{|\Gamma|^{\infty}}\rtimes_{\gamma|_{N}, \mu}N$. 
Since we have $N(\tilde{\gamma}|_{N})=\{\iota\}$, $M_{|\Gamma|^{\infty}}\rtimes_{\gamma|_{N}, \mu}N$ 
is simple and monotracial. (See, for example, \cite{Bed} and \cite{K}.)
Furthermore, $B$ is a unital AF algebra because $\gamma|_{N}$ is a product type action. Define an 
action $\beta$ of $\Gamma$ on $B$ by
$$
\beta_g\left( \sum_{h\in N}x_h\lambda^{\gamma}_h\right)= \sum_{h\in N}
\lambda(g, h)\gamma_g(x_{g^{-1}hg})\lambda^{\gamma}_{h}
$$
for any  $\sum_{h\in N}x_h\lambda^{\gamma}_h\in B$ and $g\in\Gamma$. 
Note that we have $N(\beta)=N(\tilde{\beta})=N$. 
By Lemma \ref{lem:key-model}, there exists a simple unital monotracial AF algebra $A$ and 
a unitary representation $V$ of $\Gamma$ on $\pi_{\tau_A}(A)^{''}$ such that 
$\mathrm{Ad}(V)$ induces an outer action $\alpha$ of $\Gamma$ on $A$. Set
$$
A_{(\Gamma, N, \Lambda)}:= A\otimes B \quad \text{and} \quad S^{(\Gamma, N, \Lambda)}
:= \alpha\otimes\beta.
$$
Then $A_{(\Gamma, N, \Lambda)}$ is a simple unital monotracial AF algebra and we have 
$N(\tilde{S}^{(\Gamma, N, \Lambda)})=N$ and $N(S^{(\Gamma, N, \Lambda)})=\{\iota\}$. 
Define a map $V^{\prime}$ from $N$ to 
$\pi_{\tau_{A_{(\Gamma, N, \Lambda)}}}(A_{(\Gamma, N, \Lambda)})^{''}\cong 
\pi_{\tau_A}(A)^{''}\bar{\otimes}\pi_{\tau_B}(B)^{''}$ 
by 
$$
V^{\prime}_h:= V_h\otimes \pi_{\tau_B}(\lambda_h^{\gamma})
$$
for any $h\in N$. Then $V^{\prime}$ is a map from $N$ to 
$U(\pi_{\tau_{A_{(\Gamma, N, \Lambda)}}}(A_{(\Gamma, N, \Lambda)})^{''})$ such that 
$\tilde{S}^{(\Gamma, N, \Lambda)}_h=\mathrm{Ad}(V^{\prime}_h)$ for any $h\in N$. 
It can be easily checked that
$$
\tilde{S}^{(\Gamma, N, \Lambda)}_g(V^{\prime}_{g^{-1}hg})=\lambda (g, h)V^{\prime}_{h} \quad 
\text{and} \quad V_{h}^{\prime}V_{k}^{\prime}= \mu (h,k)V_{hk}^{\prime} 
$$ 
for any $g\in \Gamma$ and $h,k\in N$. Hence we have $\Lambda(\tilde{S}^{(\Gamma, N, \Lambda)})
=[(\lambda, \mu)]=\Lambda$. 
\end{proof}

We need the following corollary in order to prove the classification theorem 
for outer $\mathcal{W}$-absorbing actions up to conjugacy. 

\begin{cor}\label{cor:1.5.9} (Cf. \cite[Proposition 1.5.9]{Jones}.) \ \\
Let $C:= A_{(\Gamma, N, \Lambda)}\otimes \mathcal{W}\otimes 
\mathbb{K}(\ell^2(\Gamma))$ and 
$\alpha^{\prime}:= S^{(\Gamma, N, \Lambda)}\otimes \mathrm{id}_{\mathcal{W}}\otimes 
\mathrm{Ad}(\rho)$ where $\rho$ is the right regular representation of $\Gamma$ 
on $\ell^2(\Gamma)$. 
Suppose that $W$ is a map from $N$ to $U(\pi_{\tau_{C}}(C)^{''})$ such that 
$\tilde{\alpha}^{\prime}_h=\mathrm{Ad}(W_h)$ for any $h\in N$. 
Then, for any $\eta\in H^1(N)^{\Gamma}$, there exists an automorphism $\theta$ of 
$C$ 
such that 
$$
\theta \circ \alpha^{\prime}_g=\alpha^{\prime}_g\circ \theta \quad  
\text{and} 
\quad
\tilde{\theta}(W_h)=\eta (h)W_h
$$ 
for any $g\in \Gamma$ and $h\in N$. 
\end{cor}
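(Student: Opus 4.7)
The plan is to realize $\theta$ as a tensor product of the identity on most factors of $C$ together with an automorphism $\hat{\eta}$ of the building block $B := M_{|\Gamma|^{\infty}} \rtimes_{\gamma|_N,\mu} N$, built directly from the $\partial$-action recalled in Section 2.2. Define $\hat{\eta}$ by declaring it to be the identity on $M_{|\Gamma|^{\infty}}$ and setting $\hat{\eta}(\lambda_h^{\gamma}) = \eta(h)\lambda_h^{\gamma}$ for every $h \in N$. Writing the defining relations
$$\lambda_h^{\gamma}\lambda_k^{\gamma} = \mu(h,k)\lambda_{hk}^{\gamma}, \qquad \lambda_h^{\gamma} x = \gamma_h(x)\lambda_h^{\gamma},$$
one checks that the prescribed images satisfy the same relations: the first uses that $\eta$ is a group homomorphism ($\eta(h)\eta(k)=\eta(hk)$), and the second is automatic since $\hat{\eta}$ fixes $M_{|\Gamma|^{\infty}}$. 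Thus the universal property of the (reduced $=$ full, as $N$ is finite) twisted crossed product gives $\hat{\eta} \in \mathrm{Aut}(B)$.

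Next I would verify that $\hat{\eta}$ commutes with $\beta$. Applied to a generator $x_h \lambda_h^{\gamma}$, both $\hat{\eta}\circ\beta_g$ and $\beta_g\circ\hat{\eta}$ produce $\lambda(g,h)\gamma_g(x_{g^{-1}hg})\lambda_h^{\gamma}$ multiplied by $\eta(h)$ and $\eta(g^{-1}hg)$ respectively; these agree precisely because $\eta \in H^1(N)^{\Gamma}$ is $\Gamma$-invariant. Now set
$$\theta := \mathrm{id}_{A} \otimes \hat{\eta} \otimes \mathrm{id}_{\mathcal{W}} \otimes \mathrm{id}_{\mathbb{K}(\ell^2(\Gamma))} \in \mathrm{Aut}(C).$$
Since $\theta$ is the identity on every tensor factor on which $\alpha'_g$ acts nontrivially except for $B$, the equivariance $\theta\circ \alpha'_g = \alpha'_g \circ \theta$ reduces to $\hat{\eta}\circ \beta_g = \beta_g \circ \hat{\eta}$, which we just established.

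It remains to compute $\tilde{\theta}(W_h)$. Using the explicit implementers from Proposition \ref{model:cocycle-conjugacy}, the unitary
$$U_h := V_h \otimes \pi_{\tau_B}(\lambda_h^{\gamma}) \otimes 1_{\pi_{\tau_{\mathcal{W}}}(\mathcal{W})''} \otimes \rho_h \in \pi_{\tau_C}(C)''$$
implements $\tilde{\alpha}'_h$. Because $\pi_{\tau_C}(C)''$ is a II$_1$ factor (tensor product of three factors, two of type II$_1$ and one of type I$_{|\Gamma|}$), any other implementer of the same automorphism differs from $U_h$ by a scalar, so $W_h = c_h U_h$ for some $c_h \in \mathbb{T}$. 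Since $\tilde{\theta} = \mathrm{id} \bar{\otimes} \tilde{\hat{\eta}} \bar{\otimes} \mathrm{id} \bar{\otimes} \mathrm{id}$ and $\tilde{\hat{\eta}}(\pi_{\tau_B}(\lambda_h^{\gamma})) = \eta(h)\pi_{\tau_B}(\lambda_h^{\gamma})$, we obtain $\tilde{\theta}(U_h) = \eta(h) U_h$, and consequently $\tilde{\theta}(W_h) = c_h \eta(h) U_h = \eta(h) W_h$.

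The only mild obstacle is establishing $\hat{\eta}$ as a genuine $*$-automorphism of the C$^*$-algebra $B$ (not merely of the algebraic twisted group ring or the enveloping von Neumann algebra), which is handled cleanly by the universal property of the twisted crossed product since the prescribed unitaries $\eta(h)\lambda_h^{\gamma}$ form a covariant pair with $\mathrm{id}_{M_{|\Gamma|^{\infty}}}$. Everything else is a direct tensor-factor-by-tensor-factor verification.
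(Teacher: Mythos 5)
Your proof is correct and follows essentially the same route as the paper: the automorphism you call $\hat{\eta}$ is exactly the paper's $\sigma$ on $B=M_{|\Gamma|^{\infty}}\rtimes_{\gamma|_N,\mu}N$, the equivariance with $\beta$ is checked via $\Gamma$-invariance of $\eta$ in the same way, and the final computation of $\tilde{\theta}(W_h)$ uses the identical factor argument that $W_h$ differs from the explicit implementer $V_h\otimes\pi_{\tau_B}(\lambda_h^{\gamma})\otimes 1\otimes\rho_h$ by a scalar. The only additions are your explicit appeal to the universal property of the twisted crossed product (which the paper leaves implicit) and a harmless miscount of tensor factors in $\pi_{\tau_C}(C)''$.
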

\begin{proof}
With notation as above (in the proof of Proposition \ref{model:cocycle-conjugacy}), 
there exists a map $\zeta$ from $N$ to $\mathbb{T}$ such that 
$W_h= \zeta (h)(V_h\otimes \pi_{\tau_B}(\lambda^{\gamma}_h)\otimes 
1_{\pi_{\tau_{\mathcal{W}}}(\mathcal{W})^{''}}
\otimes \rho_h)$ for any $h\in N$ 
because $\pi_{\tau_{C}}(C)^{''}$ is a factor.  
Since $\eta$ is a group homomorphism from $N$ to $\mathbb{T}$, 
we can define an automorphism $\sigma$ of 
$B=M_{|\Gamma|^{\infty}}\rtimes_{\gamma|_{N}, \mu}N$ 
by 
$$
\sigma \left( \sum_{h\in N}x_h\lambda^{\gamma}_h\right)= \sum_{h\in N}\eta(h)x_h\lambda^{\gamma}_h
$$
for any  $\sum_{h\in N}x_h\lambda^{\gamma}_h\in B$. 
Note that we have $\sigma \circ \beta_g= \beta_g\circ \sigma$ for any $g\in \Gamma$ 
because we have $\eta (g^{-1}hg)=\eta (h)$ for any $g\in\Gamma$ and $h\in N$. 
Hence 
$\theta:= \mathrm{id}_A\otimes \sigma\otimes 
\mathrm{id}_{\mathcal{W}\otimes\mathbb{K}(\ell^2(\Gamma))}$ 
is an automorphism of $C$ 
such that $\theta \circ \alpha_g^{\prime}
=\alpha_g^{\prime} \circ \theta$ for any $g\in \Gamma$. 
Furthermore, we have 
\begin{align*}
\tilde{\theta}(W_h)
&= \zeta(h) (V_h\otimes \pi_{\tau_{B}}(\sigma (\lambda^{\gamma}_h))\otimes 
1_{\pi_{\tau_{\mathcal{W}}}(\mathcal{W})^{''}}\otimes \rho_h) \\
& =\zeta(h)(V_h \otimes \eta(h)\pi_{\tau_B}(\lambda^{\gamma}_h)\otimes 
1_{\pi_{\tau_{\mathcal{W}}}(\mathcal{W})^{''}}\otimes \rho_h) \\
&=\eta(h)\zeta(h)
(V_h \otimes \pi_{\tau_B}(\lambda^{\gamma}_h)\otimes 
1_{\pi_{\tau_{\mathcal{W}}}(\mathcal{W})^{''}}\otimes \rho_h)=\eta(h)W_h
\end{align*}
for any $h\in N$. Therefore we obtain the conclusion. 
\end{proof}

In the rest of this section, we shall construct outer actions $\alpha$ with arbitrary invariants 
$(N(\tilde{\alpha}), \Lambda(\tilde{\alpha}), \iota(\tilde{\alpha}))$ in the classification for 
outer $\mathcal{W}$-absorbing actions up to conjugacy. 
We refer the reader to \cite[Section 3]{Na6} for the idea of the construction. 
We say that a probability measure $m$ on a finite set $P$ has \textit{full support} if $m(p)>0$ 
for any $p\in P$.  
Using (2.2.1) and Proposition \ref{pro:tracial-state}, 
we obtain the following proposition by the same proof as \cite[Proposition 1.7]{Na6}. 
\begin{pro}
Let $A$ be a simple separable monotracial C$^*$-algebra, and let $\alpha$ be an outer action 
of a finite group on $A$. If $m$ is a probability measure on 
$P_{\lambda_{\tilde{\alpha}}, \mu_{\tilde{\alpha}}}$ such that 
$i(\tilde{\alpha})=[m]$, then $m$ has full support. 
\end{pro}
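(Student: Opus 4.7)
The plan is to argue by contradiction, following the pattern of \cite[Proposition 1.7]{Na6} and combining formula $(2.2.1)$ with Proposition \ref{pro:tracial-state}. Suppose $m$ fails to have full support, so that $m(p_0)=0$ for some $p_0\in P_{\lambda_{\tilde\alpha},\mu_{\tilde\alpha}}$. Because $\partial$ acts on $(\mathbb{C}_{\mu_{\tilde\alpha}}N(\tilde\alpha))^{\psi^{\lambda_{\tilde\alpha}}}$ by automorphisms, it permutes the finite set $P_{\lambda_{\tilde\alpha},\mu_{\tilde\alpha}}$, and hence every representative of the equivalence class $[m]=i(\tilde\alpha)$ vanishes at some point of $P_{\lambda_{\tilde\alpha},\mu_{\tilde\alpha}}$. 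Fixing any map $V$ from $N(\tilde\alpha)$ to $U(\pi_{\tau_A}(A)'')$ implementing $\tilde\alpha|_{N(\tilde\alpha)}$, this produces a $p\in P_{\lambda_{\tilde\alpha},\mu_{\tilde\alpha}}$ with $m_V(\tilde\alpha)(p)=0$.

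Formula $(2.2.1)$ now yields $\tau_p(e_{\tilde\alpha})=0$, where $\tau_p$ is the extremal tracial state on $\pi_{\tau_A}(A)''\rtimes_{\tilde\alpha}\Gamma$ attached to $p$. Under the inclusion $M(A\rtimes_\alpha \Gamma)\hookrightarrow \pi_{\tau_A\circ E_\alpha}(A\rtimes_\alpha \Gamma)''\cong \pi_{\tau_A}(A)''\rtimes_{\tilde\alpha}\Gamma$ recalled in the preliminaries, the projection $e_\alpha$ is identified with $e_{\tilde\alpha}$, so $\tau_p(e_\alpha)=0$. By Proposition \ref{pro:tracial-state}, $\tau_p|_{A\rtimes_\alpha\Gamma}$ is a tracial state on the C$^*$-algebra $A\rtimes_\alpha \Gamma$, which is simple because $\alpha$ is outer and $A$ is simple; hence $\tau_p|_{A\rtimes_\alpha\Gamma}$ is faithful on nonzero positive elements.

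To reach a contradiction, I use that $e_\alpha(A\rtimes_\alpha \Gamma)e_\alpha\cong A^\alpha$ is nonzero, which holds for any outer action of a finite group on a simple C$^*$-algebra (the averaging $\frac{1}{|\Gamma|}\sum_g\alpha_g(a)$ of a nonzero positive $a$ is a nonzero fixed element). Choosing a nonzero positive $y$ in this hereditary corner, the inequality $y\leq \|y\|e_\alpha$ in the ambient von Neumann algebra together with normality of the extension of $\tau_p$ gives $0<\tau_p(y)\leq \|y\|\tau_p(e_\alpha)=0$, a contradiction. No genuinely new ingredient is required beyond the ones just recalled; the only point requiring care is the identification of $e_\alpha$ with $e_{\tilde\alpha}$ and the agreement between the C$^*$-algebraic tracial state $\tau_p|_{A\rtimes_\alpha\Gamma}$ and its normal extension on the enveloping von Neumann algebra, both of which are supplied by Proposition \ref{pro:tracial-state}.
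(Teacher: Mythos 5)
Your argument is correct and uses exactly the two ingredients the paper itself invokes (formula $(2.2.1)$ and Proposition \ref{pro:tracial-state}), combined with simplicity of $A\rtimes_{\alpha}\Gamma$ and the nonvanishing of the corner $e_{\alpha}(A\rtimes_{\alpha}\Gamma)e_{\alpha}\cong A^{\alpha}$; this is the same route the paper takes by referring to the proof of \cite[Proposition 1.7]{Na6}. Your preliminary observation that the $H^1(N)^{\Gamma}$-action permutes $P_{\lambda_{\tilde{\alpha}},\mu_{\tilde{\alpha}}}$, so that vanishing somewhere passes from $m$ to the representative $m_V(\tilde{\alpha})$, is a necessary step and is handled correctly.
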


The following lemma is a generalization of \cite[Lemma 3.1]{Na6}. 
See also \cite[Proposition 3.5 and Theorem 1.5.11]{Jones}.

\begin{lem}\label{lem:model}
Let $\Gamma$ be a finite group, and let $N$ be a normal subgroup of $\Gamma$. 
For any $(\lambda, \mu)\in Z(\Gamma, N)$ and $m\in M(P_{\lambda, \mu})$, there exist a simple unital 
monotracial AF algebra $A$, an action $\alpha$ of $\Gamma$ on $A$ and a map $v$ 
from $N$ to $U(A)$ such that 
$$
N(\alpha)=N(\tilde{\alpha})=N, \quad \alpha_h=\mathrm{Ad}(v_h), \quad 
\alpha_g(v_{g^{-1}hg})=\lambda(g, h)v_h, \quad v_{h}v_{k}=\mu (h, k)v_{hk}
$$ 
and 
$$
\tau_A(\Phi_{v}(p))=m(p)
$$
for any $g\in \Gamma$, $h, k\in N$ and $p\in P_{\lambda, \mu}$. 
\end{lem}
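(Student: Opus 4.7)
The strategy is to encode the prescribed probability measure $m$ as tracial values of the unitaries $v_h$, and then to build an AF model realizing these values. First, for any candidate $(A,\alpha,v)$ satisfying the first four conditions of the statement, set $c_h := \tau_A(v_h)$. The $\alpha$-invariance of the unique tracial state $\tau_A$, combined with $\alpha_g(v_{g^{-1}hg}) = \lambda(g,h)v_h$, forces $c_\iota = 1$ and the covariance relation $c_{g^{-1}hg} = \lambda(g,h)c_h$ for all $g\in\Gamma$ and $h\in N$. Since $\Phi_v(p) = \sum_{h\in N} X_p(h)v_h$, we have $\tau_A(\Phi_v(p)) = \sum_{h\in N} X_p(h)c_h$; by the nondegeneracy of the $(X_p)$-coefficient system \cite[Proposition 1.3.2]{Jones}, the linear map $c \mapsto \bigl(\sum_h X_p(h)c_h\bigr)_{p \in P_{\lambda,\mu}}$ is a bijection between $\lambda$-covariant functions $c : N \to \mathbb{C}$ and complex-valued functions on $P_{\lambda,\mu}$. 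Hence the probability measure $m$ uniquely determines a target covariant function $c^{(m)}$, and the lemma reduces to constructing an AF model with $\tau_A(v_h) = c^{(m)}_h$ for every $h\in N$.

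For the construction, I would build $A$ as an AF inductive limit $A = \varinjlim A^{(n)}$ of $\Gamma$-equivariant finite-dimensional C$^*$-algebras, each carrying an equivariant unital embedding $\iota_n : \mathbb{C}_\mu N \hookrightarrow A^{(n)}$ whose generator traces approximate $c^{(m)}_h$. Concretely, decompose $\mathbb{C}_\mu N = \bigoplus_i M_{n_i}(\mathbb{C})$ into its simple summands (permuted by $\psi^\lambda$ along $\Gamma$-orbits corresponding to $P_{\lambda,\mu}$) and select $\Gamma$-equivariant multiplicities in each representation $\pi_n$ so that the normalized trace of $\pi_n(h)$ converges to $c^{(m)}_h$. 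The flexibility of the Effros--Handelman--Shen theorem \cite{EHS} then produces an AF inductive sequence of matrix algebras with the correct $K_0$-theoretic data, rendering the limit $A$ simple, unital, and monotracial. The $\Gamma$-action $\alpha$ on $A$ is assembled from $\psi^\lambda$ on each $\pi_n(\mathbb{C}_\mu N)$ together with an auxiliary outer $(\Gamma/N)$-action in the style of Lemma \ref{lem:key-model} (to ensure $N(\alpha) = N(\tilde\alpha) = N$). Setting $v_h := \lim_n \iota_n(h)$, the relations $v_h v_k = \mu(h,k)v_{hk}$ and $\alpha_g(v_{g^{-1}hg}) = \lambda(g,h)v_h$ hold by construction, and $\tau_A(\Phi_v(p)) = m(p)$ follows in the limit.

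The hard part will be arranging the $\Gamma$-equivariant inductive system so that all three properties---simplicity and monotraciality of the AF limit, prescribed trace values $\tau_A(v_h) = c^{(m)}_h$, and $N(\alpha) = N(\tilde\alpha) = N$---hold simultaneously. The tension is that the natural outer twisted crossed-product constructions that guarantee simplicity (as in Proposition \ref{model:cocycle-conjugacy}) force $\tau_A(v_h) = 0$ for $h \neq \iota$, realizing only the specific measure $m(p) = X_p(\iota)$. To break this restriction one must split $A$ into pieces matching the decomposition of $(\mathbb{C}_\mu N)^{\psi^\lambda}$ into minimal central projections, weight them by $m(p)$, and reassemble them via the EHS freedom, mirroring Jones's cut-down construction for the injective II$_1$ factor \cite[Proposition 3.5]{Jones}.
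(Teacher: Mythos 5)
Your final paragraph correctly identifies the paper's actual route --- Jones's cut-down construction applied to a twisted-crossed-product model --- but the proposal stops at naming it, and the one quantitative instruction you do commit to (``weight them by $m(p)$'') is precisely the step that would fail. The paper's proof takes $B=M_{|\Gamma|^{\infty}}\rtimes_{\gamma|_N,\mu}N$ with the action $\beta$ and implementing unitaries $\lambda^{\gamma}_h$, for which $\tau_B(\Phi_{\lambda^{\gamma}}(p))=X_p(\iota)$; it then invokes \cite[Proposition 1.3.2 and Lemma 1.5.10]{Jones} to write $m(p)=d_pX_p(\iota)/\sum_{p'}d_{p'}X_{p'}(\iota)$ with $d_p\in[0,1]$, produces via Effros--Handelman--Shen \cite{EHS} an auxiliary simple unital monotracial AF algebra $C$ containing projections $e_p$ with $\tau_C(e_p)=d_p$, and cuts down by $e=\sum_p\Phi_{\lambda^{\gamma}}(p)\otimes e_p$, setting $A=e(B\otimes C)e$, $\alpha_g=(\beta_g\otimes\mathrm{id}_C)|_A$ and $v_h=(\lambda^{\gamma}_h\otimes 1_C)e$. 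The renormalization of the trace on the corner is exactly why the weights must be the $d_p$ and not $m(p)$: since $\Phi_v(p)=\Phi_{\lambda^{\gamma}}(p)\otimes e_p$, one gets $\tau_A(\Phi_v(p))=d_pX_p(\iota)\big/\sum_{p'}d_{p'}X_{p'}(\iota)$, whereas choosing projections of trace $m(p)$ yields the measure $p\mapsto X_p(\iota)m(p)\big/\sum_{p'}X_{p'}(\iota)m(p')$, which differs from $m$ unless all $X_p(\iota)$ coincide. Solving for the $d_p$ and knowing they can be taken in $[0,1]$ (which uses $X_p(\iota)>0$) is the missing ingredient.

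Two further remarks. The equivariant inductive-limit scheme of your second paragraph (choosing representation multiplicities so that the generator traces converge to $c^{(m)}_h$) is never made precise and becomes unnecessary once the cut-down is in place; in particular the auxiliary outer action of Lemma \ref{lem:key-model} plays no role in this lemma. Here one needs $N(\alpha)=N$, i.e.\ innerness on $N$ at the C$^*$-level, which the unitaries $\lambda^{\gamma}_h\in U(B)$ already provide, while $N(\tilde{\alpha})=N$ follows from outerness of the product-type action $\gamma_g$ on the weak closure for $g\notin N$; Lemma \ref{lem:key-model} is used instead in Proposition \ref{model:cocycle-conjugacy} and Theorem \ref{thm:model}. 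Finally, whichever weighting is used, one must check that $e$ is fixed by $\beta\otimes\mathrm{id}_C$ and commutes with the $\lambda^{\gamma}_h\otimes 1_C$, so that $\alpha$ and $v$ genuinely restrict to the corner; this is where the specific choice $\Phi_{\lambda^{\gamma}}(p)\otimes e_p$, with $\Phi_{\lambda^{\gamma}}(p)$ a minimal projection of $\Phi_{\lambda^{\gamma}}((\mathbb{C}_{\mu}N)^{\psi^{\lambda}})\subseteq Z(B^{\beta})$ commuting with the $\lambda^{\gamma}_h$, does the work.
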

\begin{proof}
By the same way as in the proof of Proposition \ref{model:cocycle-conjugacy}, 
we obtain a simple unital monotracial AF algebra 
$B=M_{|\Gamma|^{\infty}}\rtimes_{\gamma|_N, \mu} N$ and an action $\beta$ of $\Gamma$ 
on $B$ such that 
$
\beta_g\left( \sum_{h\in N}x_h\lambda^{\gamma}_h\right)= \sum_{h\in N}
\lambda(g, h)\gamma_g(x_{g^{-1}hg})\lambda^{\gamma}_{h}
$
for any  $\sum_{h\in N}x_h\lambda^{\gamma}_h\in B$ and $g\in\Gamma$. 
Note that we regard $\lambda^{\gamma}$ as a map from $N$ to $U(B)$ such that 
$$
\beta_h
=\mathrm{Ad}(\lambda^{\gamma}_h), \quad  
\beta_g(\lambda^{\gamma}_{g^{-1}hg})=\lambda(g, h)\lambda^{\gamma}_h \quad \text{and} \quad  
\lambda_{hk}^{\gamma}=\mu (h,k)\lambda^{\gamma}_h\lambda^{\gamma}_k
$$ 
for any $g\in \Gamma$ and $h,k\in N$. 
Since we have $\tau_{B}= \tau_{M_{|\Gamma|^{\infty}}}\circ E_{\gamma|_N}$, 
$$
\tau_{B} (\Phi_{\lambda^{\gamma}} (p))= X_{p}(\iota)
$$ 
for any $p\in P_{\lambda, \mu}$. 
By \cite[Proposition 1.3.2 and Lemma 1.5.10]{Jones}, there exists a finite sequence 
$\{ d_p\}_{p\in P_{\lambda, \mu}}$ of $[0,1]$ such that 
$$
m(p)=\frac{d_{p}X_{p}(\iota)}{\sum_{p^{\prime}\in P_{\lambda, \mu}}d_{p^{\prime}}X_{p^{\prime}}(\iota)}
$$
for any $p\in P_{\lambda, \mu}$. 
There exists a simple unital monotracial AF algebra $C$ such that 
$K_0(C)$ is the additive subgroup of $\mathbb{R}$ generated by $\mathbb{Q}$ and 
$\{d_p\}_{p\in P_{\lambda}}$, $K_0(C)_{+}=K_0(C)\cap \mathbb{R}_{+}$ and $[1_C]_0=1$ 
by the Effros-Handelman-Shen theorem \cite{EHS} (or \cite[Theorem 2.2]{Ell-order}).
For any $p\in P_{\lambda, \mu}$, there exists a projection $e_p$ in $C$ such that 
$\tau_{C}(e_p)=d_p$, and 
put $e_p^{\prime}:= \Phi_{\lambda}(p)\otimes e_p\in B\otimes C$. 
Then $\{e_{p}^{\prime}\; |\; p\in P_{\lambda, \mu}\}$ 
are mutually orthogonal projections in 
$(B\otimes C)^{\beta\otimes\mathrm{id}_{C}}$. 
Note that we also have $e_{p}^{\prime}(\lambda_h^{\gamma}\otimes 1_C)=
(\lambda_h^{\gamma}\otimes 1_C)e_{p}^{\prime}$ 
for any $p\in P_{\lambda, \mu}$ and $h\in N$. 
Set $e:=  \sum_{p\in P_{\lambda}}e_p^{\prime}$, and let 
$$
A:=  e(B\otimes C)e, \quad \alpha_g:= (\beta_g \otimes \mathrm{id}_{C})|_{e(B\otimes C)e} \quad 
\text{and} \quad v_h:= (\lambda_h \otimes 1_{C})e
$$
for any $g\in \Gamma$ and $h\in N$. Then it can be easily checked that 
$(A, \alpha, v)$ satisfies the desired property. 
\end{proof}

The following theorem is a generalization of \cite[Theorem 3.3]{Na6}.

\begin{thm}\label{thm:model}
Let $\Gamma$ be a finite group, and let $N$ be a normal subgroup of $\Gamma$. 
For any $\Lambda\in \Lambda(\Gamma, N)$ and $m\in M(P_{\lambda, \mu})$ with full support, 
there exists a simple unital monotracial AF algebra $A_{\Gamma, N, \Lambda, m}$ and an outer action 
$\alpha^{(\Gamma, N, \Lambda, m)}$
of $\Gamma$ on $A_{\Gamma, N, \Lambda, m}$ such that 
$$
N(\tilde{\alpha}^{(\Gamma, N, \lambda, m)})=N, \quad  
\Lambda({\tilde{\alpha}^{(\Gamma, N, \Lambda, m)}})= \Lambda \quad \text{and}
\quad  
i(\tilde{\alpha}^{(\Gamma, N, \Lambda, m)})=[m].
$$  
\end{thm}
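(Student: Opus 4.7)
The plan is to combine Lemma \ref{lem:model} and Lemma \ref{lem:key-model} via a tensor product. Lemma \ref{lem:model} already realizes any prescribed characteristic invariant (through the map $v$) and any prescribed inner invariant (through the traces of the implementers), but the resulting action is inner on $N$ at the C*-algebra level. Tensoring with an outer AF action from Lemma \ref{lem:key-model} will force C*-outerness while preserving the factor-level inner structure on $N$. However, tensoring multiplies the traces of the VN-implementers of $\tilde{\alpha}_h$ by the factor $r\in(0,1)$ from Lemma \ref{lem:key-model}, so I will first rescale the input measure to compensate.

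Fix $(\lambda,\mu)\in Z(\Gamma,N)$ representing $\Lambda$. Since $m$ has full support and $X_p(\iota)>0$ for every $p\in P_{\lambda,\mu}$ (by \cite[Proposition 1.3.2]{Jones}), the number $c:=\min_{p} m(p)/X_p(\iota)$ is strictly positive. I pick $r\in(1-c,1)$ and define
\[
m_0(p):=\frac{m(p)-(1-r)X_p(\iota)}{r}\qquad(p\in P_{\lambda,\mu}).
\]
The identity $\sum_{p}X_p(\iota)=1$, read off from $1_{\mathbb{C}_\mu N}=\sum_{p}p$, shows $m_0$ is a probability measure, and the choice of $r$ ensures that it has full support. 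I then apply Lemma \ref{lem:model} with $(\lambda,\mu)$ and $m_0$ to obtain $(B,\beta,v^B)$, and Lemma \ref{lem:key-model} with parameter $r$ to obtain an outer action $\gamma$ on an AF algebra $C$ together with a unitary representation $W$ of $\Gamma$ on $\pi_{\tau_C}(C)''$ satisfying $\tilde{\gamma}_g=\mathrm{Ad}(W_g)$ and $\tilde{\tau}_C(W_g)=r$ for $g\neq\iota$. I set $A_{\Gamma,N,\Lambda,m}:=B\otimes C$ and $\alpha^{(\Gamma,N,\Lambda,m)}:=\beta\otimes\gamma$.

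Setting $V'_h:=v_h^B\otimes W_h$ for $h\in N$, the representation property of $W$ gives $\tilde{\gamma}_g(W_{g^{-1}hg})=W_h$ and $W_hW_k=W_{hk}$, so $V'$ inherits the cocycle $(\lambda,\mu)$ directly from $v^B$; hence $\Lambda(\tilde{\alpha})=\Lambda$. The computation
\[
\tilde{\tau}_A(\Phi_{V'}(p)) = X_p(\iota)+r\sum_{h\neq\iota}X_p(h)\,\tilde{\tau}_B(v_h^B) = X_p(\iota)+r(m_0(p)-X_p(\iota)) = m(p)
\]
then yields $i(\tilde{\alpha})=[m]$. For $g\notin N$, the outerness of $\tilde{\beta}_g$ on the factor $\pi_{\tau_B}(B)''$ combined with innerness of $\tilde{\gamma}_g$ forces $\tilde{\alpha}_g$ outer on the tensor of factors, giving both $N(\tilde{\alpha})=N$ and, since VN-outer implies C*-outer, C*-outerness of $\alpha_g$ in this range.

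The main obstacle is C*-level outerness of $\alpha_g$ for $g\in N\setminus\{\iota\}$, where $\tilde{\alpha}_g$ is inner by design and no VN-level argument is available. I will factor $\alpha_g=\mathrm{Ad}(v_g^B\otimes 1_C)\circ(\mathrm{id}_B\otimes\gamma_g)$; innerness of $\alpha_g$ would then force innerness of $\mathrm{id}_B\otimes\gamma_g$, but since $Z(B)=\mathbb{C}$ the relative commutant of $B\otimes 1_C$ in the simple unital C*-algebra $B\otimes C$ equals $1_B\otimes C$, so any implementing unitary would have the form $1_B\otimes v$ with $v\in U(C)$, contradicting the outerness of $\gamma$ supplied by Lemma \ref{lem:key-model}.
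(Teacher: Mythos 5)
Your proposal is correct and follows essentially the same route as the paper: your rescaled measure $m_0(p)=\frac{m(p)-(1-r)X_p(\iota)}{r}$ is algebraically identical to the paper's $m^{\prime}(p)=\frac{m(p)-X_p(\iota)}{r}+X_p(\iota)$, and the construction (Lemma \ref{lem:model} for the inner part tensored with the outer product-type action of Lemma \ref{lem:key-model}, followed by the same trace computation for $\Phi_{V^{\prime}}(p)$) matches the paper's. The only differences are cosmetic: you choose $r$ via the explicit bound $r>1-\min_p m(p)/X_p(\iota)$ rather than the paper's limit argument, and you spell out the relative-commutant argument for C$^*$-outerness on $N\setminus\{\iota\}$ that the paper leaves implicit.
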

\begin{proof}
Choose a pair $(\lambda, \mu)\in Z(\Gamma, N)$ such that $[(\lambda, \mu)]=\Lambda$. 
Since $m$ has full support, we have 
$$
\lim_{r\to 1-0}\left(\frac{m(p)-X_p(\iota)}{r}+X_p(\iota)\right) = m(p)>0
$$
for any $p\in P_{\lambda, \mu}$. 
Hence there exists a real number $r$ with $0<r<1$ such that 
$$
\frac{m(p)-X_p(\iota)}{r}+X_p(\iota)>0
$$
for any $p\in P_{\lambda, \mu}$. (Note that $X_p(\iota)$ is a real number for any 
$p\in P(\lambda, \mu)$ by \cite[Proposition 1.3.2]{Jones} and $P_{\lambda, \mu}$ is a finite set.)
Since we have $\sum_{p\in P_{\lambda, \mu}}X_p(\iota)=1$ by \cite[Proposition 1.3.2]{Jones} and 
$m$ is a probability measure on $P_{\lambda, \mu}$, 
$$
\sum_{p\in P_{\lambda, \mu}}\left(\frac{m(p)-X_p(\iota)}{r}+X_p(\iota)\right)=\frac{1-1}{r}
+1=1.
$$
Therefore the map $m^{\prime}$ from $P_{\lambda, \mu}$ to $[0,1]$ defined by 
$$
m^{\prime}(p):=\frac{m(p)-X_p(\iota)}{r}+X_p(\iota)
$$
for any $p\in P_{\lambda, \mu}$ is a probability measure on $P_{\lambda, \mu}$. 
By Lemma \ref{lem:model}, there exist a simple unital monotracial AF algebra $A$, an action 
$\alpha$ of $\Gamma$ on $A$ and a map $v$ from $N$ to $U(A)$ such that 
$$
N(\alpha)=N(\tilde{\alpha})=N, \quad \alpha_h=\mathrm{Ad}(v_h), \quad 
\alpha_g(v_{g^{-1}hg})=\lambda(g,h)v_h, \quad
v_hv_k=\mu(h ,k)v_{hk}   
$$
and 
$$
\tau_{A}(\Phi_v(p))= m^{\prime} (p)
$$
for any $g\in\Gamma$, $h,k\in N$ and $p\in P_{\lambda, \mu}$. 
Lemma \ref{lem:key-model} implies that there exist a unital simple monotracial AF algebra $B$ and a 
unitary representation $V$ of $\Gamma$ on 
$\pi_{\tau_{B}}(B)^{''}$ such that $\mathrm{Ad}(V)$ induces an outer action 
$\beta$ of $\Gamma$ on $B$ and $\tilde{\tau}_{B}(V_g)=r$ for any $g\in \Gamma\setminus \{\iota\}$. 
Put 
$$
A_{\Gamma, N, \Lambda, m}: = A\otimes B \quad \text{and} \quad 
\alpha^{(\Gamma, N, \Lambda, m)}:= \alpha \otimes \beta. 
$$
Then $A_{\Gamma, N, \Lambda, m}$ is a simple unital monotracial AF algebra and 
$\alpha^{(\Gamma, N, \Lambda, m)}$ is an outer action of $\Gamma$ on 
$A_{\Gamma, N, \Lambda, m}$. 
Define a map $V^{\prime}$ from $N$ to 
$U(\pi_{\tau_{A_{\Gamma, N, \Lambda, m}}}(A_{\Gamma, N, \Lambda, m})^{''})$ by 
$$
V_h^{\prime}:= \pi_{\tau_{A}}(v_h)\otimes V_h
$$
for any $h\in N$. 
Then we have $\tilde{\alpha}^{(\Gamma, N, \Lambda, m)}_h=\mathrm{Ad}(V_h^{\prime})$ 
for any $h\in N$. 
It is easy to see that $N(\tilde{\alpha}^{(\Gamma, N, \Lambda, m)})=N$ and 
$\Lambda({\tilde{\alpha}^{(\Gamma, N, \Lambda, m)}})=\Lambda$. 
We have 
\begin{align*}
\tilde{\tau}_{A_{\Gamma, N, \Lambda, m}}(\Phi_{V^{\prime}}(p))
&= \sum_{h\in N}X_{p}(h)\tilde{\tau}_{A_{\Gamma, N, \Lambda, m}}\left( V^{\prime}_h\right) \\
&=\sum_{h\in N}X_{p}(h)\tau_{A}(v_h)\tilde{\tau}_{B}(V_h) \\
&=X_p(\iota)+ r\sum_{h\in N\setminus\{\iota\}}X_{p}(h)\tau_{A}(v_h) \\
&=X_p(\iota)+ r\left(\sum_{h\in N}X_{p}(h)\tau_{A}(v_h) - X_p(\iota)\right) \\
&=X_p(\iota)+ r\left(\tau_{A}(\Phi_{v}(p)) -X_p(\iota)\right) \\
&=X_p(\iota)+ r(m^{\prime}(p)-X_p(\iota)) \\
&=m(p)
\end{align*}
for any $p\in P_{\lambda, \mu}$. Hence $i(\tilde{\alpha}^{(\Gamma, N, \Lambda, m)})=[m]$.  
\end{proof}

\section{Approximate representability}\label{sec:app}

In this section we shall show a kind of generalization of \cite[Theorem 2.7]{Na6} for an 
existence type theorem in the next section. 

Let $\gamma$ be an outer action of a finite group $\Gamma$ on $\mathcal{W}$. 
Assume that the characteristic invariant of $\tilde{\gamma}$ is trivial. 
For any $g_0\in \Gamma$, 
let $C(g_0)$ be the centralizer of $g_0$ in $\Gamma$, that is, 
$C(g_0)=\{g\in \Gamma\; |\; g_0g=gg_0\}$.
Note that $C(g_0)$ is a subgroup of $\Gamma$ and the characteristic invariant of 
$\tilde{\gamma}|_{C(g_0)}$ is also trivial. For any $g_0\in \Gamma$, define a homomorphism 
$\Phi_{g_0}$ from $\mathcal{W}$ to $M_2(\mathcal{W})^{\omega}$ by 
$$
\Phi_{g_0} (x):=  \left(\left(\begin{array}{cc}
                         x   &     0    \\ 
                         0   &     \gamma_{g_0}(x)          
 \end{array} \right)\right)_n
$$
for any $x\in\mathcal{W}$. 
Since we have $\gamma_g\otimes \mathrm{id}_{M_2(\mathbb{C})} (\Phi_{g_0}(\mathcal{W}))
=\Phi_{g_0}(\mathcal{W})$ for any $g \in C(g_0)$, 
$\gamma|_{C(g_0)}$ induces an action of $C(g_0)$ on $F(\Phi_{g_0}(\mathcal{W}), 
M_2(\mathcal{W}))$. 
The same proof as in \cite[Lemma 2.6]{Na6} shows the following proposition. 

\begin{pro}\label{pro:strict-comparison-central}
With notation as above, if $a$ and $b$ are positive elements in 
$F(\Phi_{g_0}(\mathcal{W}), M_2(\mathcal{W}))^{\gamma|_{C(g_0)}}$ satisfying 
$
d_{\tau_{M_2(\mathcal{W}), \omega}}(a)< 
d_{\tau_{M_2(\mathcal{W}), \omega}}(b)
$, 
then there exists an element 
$r$ in $F(\Phi_{g_0}(\mathcal{W}), M_2(\mathcal{W}))^{\gamma|_{C(g_0)}}$ such that $r^*br=a$. 
\end{pro}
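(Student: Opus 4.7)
The plan is to follow the strategy of \cite[Lemma 2.6]{Na6}, adapted to the equivariant setting with the diagonal embedding $\Phi_{g_0}$: reduce the strict comparison statement in the equivariant relative central sequence C$^*$-algebra to the corresponding statement in the von Neumann algebraic central sequence algebra, verify it there using II$_1$ factor techniques, and then lift back to the C$^*$-level through an equivariant property (SI)-type argument.

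First I would pass through the surjective homomorphism $\varrho$ from $F(\Phi_{g_0}(\mathcal{W}), M_2(\mathcal{W}))$ onto $\mathcal{M}_{\omega}(\Phi_{g_0}(\mathcal{W}), M_2(\mathcal{W}))$ and analyze the $C(g_0)$-fixed-point subalgebra of the latter. Since $\pi_{\tau_{\mathcal{W}}}(\mathcal{W})''$ is the injective II$_1$ factor, $M_2(\pi_{\tau_{\mathcal{W}}}(\mathcal{W})'')$ is again the injective II$_1$ factor, and the relative commutant of the diagonal embedding $x \mapsto \mathrm{diag}(x, \tilde{\gamma}_{g_0}(x))$ in its ultrapower is tractable. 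Here the triviality of the characteristic invariant of $\tilde{\gamma}$ is essential: it provides a unitary representation $V$ of $N(\tilde{\gamma})$ implementing $\tilde{\gamma}|_{N(\tilde{\gamma})}$, which, after conjugation by $\mathrm{diag}(1, V_{g_0})$ on the $N(\tilde{\gamma})$ component, puts the embedding into a standard form. Equivariant strict comparison at the von Neumann level then follows by comparing tracial values, producing an element $\bar{r}$ in the equivariant von Neumann central sequence algebra with $\bar{r}^{*}\varrho(b)\bar{r} = \varrho(a)$.

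Next I would lift $\bar{r}$ to a C$^*$-level element using the equivariant property (SI) machinery (as developed in \cite{MS}, \cite{Sa}, \cite{Sza6} and applied to $\mathcal{W}$ in \cite{Na5}), which is available because $\mathcal{W}$ is $\mathcal{Z}$-stable and $C(g_0)$ is finite hence amenable. The strategy is: pick any lift $r_0 \in F(\Phi_{g_0}(\mathcal{W}), M_2(\mathcal{W}))^{\gamma|_{C(g_0)}}$ of $\bar{r}$, note that $r_0^{*}br_0 - a$ lies in the trace-kernel ideal, and use the strict inequality $d_{\tau_{M_2(\mathcal{W}), \omega}}(a) < d_{\tau_{M_2(\mathcal{W}), \omega}}(b)$ together with property (SI) to absorb the defect into an equivariant Cuntz-subequivalence correction, producing the desired $r$.

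The main obstacle will be carrying out the lifting step equivariantly with respect to $C(g_0)$ and simultaneously inside the relative commutant of $\Phi_{g_0}(\mathcal{W})$: the diagonal structure couples the $M_2$-multiplicity, the twist by $\tilde{\gamma}_{g_0}$, and the $C(g_0)$-action, so one must organize the approximate unit and Cuntz-subequivalence arguments so that all three constraints are respected at once. Once property (SI) is set up in the fixed-point subalgebra with the correct commutation properties, the rest is essentially the standard Matui--Sato-type finishing argument.
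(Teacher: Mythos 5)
Your proposal follows essentially the same route as the paper: the paper's proof (the argument of \cite[Lemma 2.6]{Na6}, packaged as \cite[Proposition 3.11]{Na5}) reduces the statement to factoriality of $\mathcal{M}_{\omega}(\Phi_{g_0}(\mathcal{W}), M_2(\mathcal{W}))^{\tilde{\gamma}|_{C(g_0)}}$, which is exactly your von Neumann-level comparison step, and the equivariant property (SI) lifting you describe is precisely what that proposition encapsulates. One small correction: $V_{g_0}$ is undefined when $g_0\notin N(\tilde{\gamma})$; what one actually uses is that for $h\in N(\tilde{\gamma})\cap C(g_0)$ the equivariance $\tilde{\gamma}_{g_0}(V_h)=V_{g_0hg_0^{-1}}=V_h$ places $\mathrm{diag}(V_h,\tilde{\gamma}_{g_0}(V_h))$ in $\varrho(\Phi_{g_0}(\mathcal{W}))''$, so these group elements act trivially on the relative commutant and factoriality follows from Ocneanu's theorem applied to the induced outer action of $C(g_0)/(N(\tilde{\gamma})\cap C(g_0))$.
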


Using Theorem \ref{thm:property-W} and Proposition \ref{pro:strict-comparison-central} instead of 
\cite[Lemma 2.5]{Na6} and \cite[Lemma 2.6]{Na6}, we obtain the following lemma by 
essentially the same argument as in the proof of \cite[Theorem 2.7]{Na6} 
(see also the proof of \cite[Lemma 6.2]{Na3}). 

\begin{lem}
Let $\gamma$ be an outer action of a finite group $\Gamma$ on $\mathcal{W}$. 
Assume that the characteristic invariant of $\tilde{\gamma}$ is trivial. 
If $g_0$ is an element in $\Gamma$ such that $\gamma|_{C(g_0)}$ has property W, 
then there exists a unitary element $[(v_n)_n]$ in 
$F(\mathcal{W}^{\gamma|_{C(g_0)}})$ such that $\gamma_{g_0}(x)= (v_nxv_n^*)_n$ 
in $\mathcal{W}^{\omega}$ for any $x\in \mathcal{W}$. 
\end{lem}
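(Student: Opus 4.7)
The plan is to implement the standard ``$2\times 2$ matrix trick'' that converts approximate innerness of $\gamma_{g_0}$ into a genuine unitary central sequence implementing it. I work with the homomorphism $\Phi_{g_0}$ defined just before the statement and with the induced $C(g_0)$-action on $F(\Phi_{g_0}(\mathcal{W}), M_2(\mathcal{W}))$. The standard matrix units $e_{11}, e_{22}\in M_2\subset M_2(\mathcal{W})$ commute with the diagonal image $\Phi_{g_0}(\mathcal{W})$, are fixed by $\gamma|_{C(g_0)}\otimes\mathrm{id}_{M_2}$, and hence their constant-sequence classes are projections in $F(\Phi_{g_0}(\mathcal{W}), M_2(\mathcal{W}))^{\gamma|_{C(g_0)}}$ both of $\tau_{M_2(\mathcal{W}),\omega}$-value $\tfrac{1}{2}$.

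The core step is to produce a partial isometry $W\in F(\Phi_{g_0}(\mathcal{W}), M_2(\mathcal{W}))^{\gamma|_{C(g_0)}}$ with $W^*W=e_{11}$ and $WW^*=e_{22}$. Proposition \ref{pro:strict-comparison-central} gives strict comparison in this $F$-algebra, so $e_{11}$ and $e_{22}$ are at least Cuntz equivalent. To upgrade Cuntz equivalence to an honest partial isometry, I would transfer property W via the natural corner identification $e_{ii}F(\Phi_{g_0}(\mathcal{W}), M_2(\mathcal{W}))^{\gamma|_{C(g_0)}}e_{ii}\cong F(\mathcal{W})^{\gamma|_{C(g_0)}}$ (read off by the $(i,i)$-entry), so that Theorem \ref{thm:property-W} can be applied inside each corner. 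Combined with the Cuntz comparison across corners, this yields the desired $W$, mirroring the blueprint of the proofs of \cite[Theorem 2.7]{Na6} and \cite[Lemma 6.2]{Na3}; Proposition \ref{pro:small-projection} is a convenient auxiliary for supplying local units on finite subsets of $\ker\varrho$ along the way.

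With $W$ in hand the remainder is matrix bookkeeping. Writing $W$ as a $2\times 2$ matrix over $\mathcal{W}^\omega$, the constraints $W^*W=e_{11}$ and $WW^*=e_{22}$ force all entries of $W$ to vanish except for a unitary $v$ (lifted to $(\mathcal{W}^\sim)^\omega$ if necessary) sitting in the $(2,1)$-position. The commutation of $W$ with $\Phi_{g_0}(\mathcal{W})$ then reads as $vx=\gamma_{g_0}(x)v$ for every $x\in\mathcal{W}$, i.e.\ $\gamma_{g_0}(x)=vxv^*$; and the $\gamma|_{C(g_0)}$-invariance of $W$ reads as $\gamma_g(v)=v$ for $g\in C(g_0)$, placing $v$ in the $\gamma|_{C(g_0)}$-fixed part and providing the claimed $[(v_n)_n]$ upon choosing a representative.

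I expect the main obstacle to be the partial-isometry step: promoting the Cuntz equivalence of $e_{11}$ and $e_{22}$ to an actual partial isometry in the $M_2$-enlarged equivariant $F$-algebra. Strict comparison alone is not enough, and in the stably projectionless monotracial setting one has to lean on the projection-existence and projection-uniqueness parts of property W (and ultimately on the equivariant property (SI) underlying them). The corner identification is the cleanest route for importing property W, but one must verify that the resulting partial isometry truly lives at the $M_2$-level with full commutation with $\Phi_{g_0}(\mathcal{W})$ and with the $C(g_0)$-equivariance intact, rather than only inside a single corner.
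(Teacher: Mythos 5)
Your proposal follows essentially the same route as the paper, which obtains this lemma by running the argument of \cite[Theorem 2.7]{Na6} (see also \cite[Lemma 6.2]{Na3}) with Theorem \ref{thm:property-W} and Proposition \ref{pro:strict-comparison-central} as the two inputs: the $2\times 2$ matrix trick with $\Phi_{g_0}$, Murray--von Neumann equivalence of $e_{11}$ and $e_{22}$ in $F(\Phi_{g_0}(\mathcal{W}),M_2(\mathcal{W}))^{\gamma|_{C(g_0)}}$ obtained by combining strict comparison with property W transferred to the corners, and reading off the implementing unitary from the off-diagonal entry of the resulting partial isometry. The only loose phrase is that strict comparison by itself does not make $e_{11}$ and $e_{22}$ Cuntz equivalent (their $d_{\tau}$-values coincide, so Proposition \ref{pro:strict-comparison-central} is not directly applicable); as you yourself note afterwards, it is the projection-existence and equal-trace-uniqueness parts of property W in the corners that close this gap.
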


The following lemma is a corollary of the lemma above. 

\begin{lem}\label{lem:strongly-app-inner}
Let $\gamma$ be an outer action of a finite group $\Gamma$ on $\mathcal{W}$. 
Assume that the characteristic invariant of $\tilde{\gamma}$ is trivial and 
$\gamma|_{C(g)}$ has property W for any $g\in \Gamma$. Then there exists a map 
$w$ from $\Gamma$ to $\mathcal{W}^{\omega}$ such that 
$$
\gamma_g(x)=w_g xw_g^*, \quad w_g^*w_gx=w_gw_g^*x=x \quad 
\text{and} \quad 
\gamma_g(w_h)= w_{ghg^{-1}}
$$
in $\mathcal{W}^{\omega}$ for any $x\in \mathcal{W}$ and $g,h\in \Gamma$. 
\end{lem}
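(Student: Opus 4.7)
The plan is to apply the previous lemma to a set of conjugacy class representatives of $\Gamma$, and then propagate the resulting implementing unitaries across each conjugacy class via the action of $\Gamma$ itself. First, I would fix a complete set of representatives $g_1,\dots,g_k$ for the conjugacy classes of $\Gamma$, with $g_1=\iota$ and $w_\iota := (1_{M(\mathcal{W})})_n$. By hypothesis, $\gamma|_{C(g_i)}$ has property W for every $i$, so the preceding lemma yields a unitary $[(v_n^{(i)})_n] \in F(\mathcal{W}^{\gamma|_{C(g_i)}})$ with $\gamma_{g_i}(x) = (v_n^{(i)} x (v_n^{(i)})^*)_n$ in $\mathcal{W}^\omega$ for all $x\in\mathcal{W}$. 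I set $w_{g_i} := (v_n^{(i)})_n$, viewed as an element of $\mathcal{W}^\omega$.

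For an arbitrary $g\in\Gamma$, choose $h\in\Gamma$ and an index $i$ with $g = h g_i h^{-1}$, and define
\[
w_g := \gamma_h(w_{g_i}) \in \mathcal{W}^\omega.
\]
The choice of $h$ does not matter: if $g = h g_i h^{-1} = h' g_i (h')^{-1}$ then $(h')^{-1} h \in C(g_i)$, and since each representative $v_n^{(i)}$ lies literally in $\mathcal{W}^{\gamma|_{C(g_i)}}$ it is pointwise fixed by $\gamma|_{C(g_i)}$, so $\gamma_h(w_{g_i}) = \gamma_{h'}(w_{g_i})$ as sequences. The identity $\gamma_g(x) = w_g x w_g^*$ then follows by applying $\gamma_h$ to the implementing identity for $g_i$ evaluated at $\gamma_{h^{-1}}(x)\in\mathcal{W}$. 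The covariance $\gamma_g(w_h)=w_{ghg^{-1}}$ is immediate from the definition: writing $h=h_0 g_i h_0^{-1}$ gives $ghg^{-1}=(gh_0)g_i(gh_0)^{-1}$ and hence $\gamma_g(w_h)=\gamma_{gh_0}(w_{g_i})=w_{ghg^{-1}}$.

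The main obstacle, and the step I expect to be the most delicate, is verifying the approximate unitarity $w_g^* w_g x = w_g w_g^* x = x$ in $\mathcal{W}^\omega$ for every $x\in\mathcal{W}$. The unitarity of $[(v_n^{(i)})_n]$ in $F(\mathcal{W}^{\gamma|_{C(g_i)}})$ only gives $((v_n^{(i)})^* v_n^{(i)} - 1)y \to 0$ and $(v_n^{(i)}(v_n^{(i)})^* - 1)y \to 0$ along $\omega$ for $y$ in the fixed-point subalgebra, whereas we need these relations on all of $\mathcal{W}$. To extend, I would combine the implementing relation $v_n^{(i)} x (v_n^{(i)})^* - \gamma_{g_i}(x) \to 0$ (valid for every $x\in\mathcal{W}$) with the approximate unitarity on $\mathcal{W}^{\gamma|_{C(g_i)}}$, splitting an arbitrary $x\in\mathcal{W}$ using the conditional expectation $E_{g_i} := \frac{1}{|C(g_i)|}\sum_{h\in C(g_i)} \gamma_h$ onto $\mathcal{W}^{\gamma|_{C(g_i)}}$ and controlling the difference $x-E_{g_i}(x)$ via the implementing identity applied on both sides. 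Once the approximate unitarity holds for the representative $g_i$, it propagates to each $g$ in the conjugacy class of $g_i$ by applying $\gamma_h$, since $\gamma_h$ is isometric on $\mathcal{W}^\omega$ and $\gamma_h(\mathcal{W}) = \mathcal{W}$.
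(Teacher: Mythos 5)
Your overall strategy is exactly the paper's: pick conjugacy class representatives $g_1,\dots,g_k$, apply the preceding lemma to each $\gamma|_{C(g_i)}$, and transport the resulting unitaries across each conjugacy class by $w_{hg_ih^{-1}}:=\gamma_h(w_{g_i})$, using the fact that the representatives lie literally in $\mathcal{W}^{\gamma|_{C(g_i)}}$ (hence are fixed by $C(g_i)$) to get well-definedness. The verifications of the implementing identity and of $\gamma_g(w_h)=w_{ghg^{-1}}$ are the same as in the paper and are fine.

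Two points need repair. First, $w_\iota:=(1_{M(\mathcal{W})})_n$ is not an element of $\mathcal{W}^\omega$, since $\mathcal{W}$ is non-unital; the lemma requires $w$ to take values in $\mathcal{W}^\omega$. The paper instead sets $w_\iota:=(h_n)_n$ for an approximate unit $\{h_n\}$ of $\mathcal{W}^\gamma$, which lies in $\mathcal{W}^\omega$, is $\gamma$-invariant, and acts as a unit on $\mathcal{W}$ in $\mathcal{W}^\omega$. Second, your proposed mechanism for upgrading the approximate unitarity from $\mathcal{W}^{\gamma|_{C(g_i)}}$ to all of $\mathcal{W}$ does not close the gap as described: the difference $x-E_{g_i}(x)$ is not small, and the implementing relation only gives $v_n^*yv_n\approx (v_n^*v_n)\gamma_{g_i}^{-1}(y)(v_n^*v_n)$, so "applying it on both sides" is circular without already knowing $v_n^*v_n$ acts as a unit on $\mathcal{W}$. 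The correct (and much simpler) argument is nondegeneracy of the inclusion $\mathcal{W}^{\gamma|_{C(g_i)}}\subset\mathcal{W}$: averaging an approximate unit of $\mathcal{W}$ over the finite group $C(g_i)$ produces an approximate unit $\{e_k\}$ for $\mathcal{W}$ lying in the fixed-point algebra, and then $\|(v_n^*v_n-1)x\|\le \|(v_n^*v_n-1)e_k\|\,\|x\|+C\|x-e_kx\|$ shows that acting as a unit on the fixed-point algebra implies acting as a unit on $\mathcal{W}$ for the bounded sequences $(v_n)_n$. This is precisely what the paper invokes as \cite[Proposition 2.3]{Na6}. With these two corrections your argument coincides with the paper's proof.
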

\begin{proof}
Take representatives $\iota$, $g_1$,...,$g_N$ of all the distinct conjugacy classes of $\Gamma$. 
For any $1\leq j \leq N$, there exists a unitary element $[v_j]\in 
F(\mathcal{W}^{\gamma|_{C(g_j)}})$ such that $\gamma_{g_j}(x)= v_jxv_j^*$ in 
$\mathcal{W}^{\omega}$ for any $x\in \mathcal{W}$ by the lemma above. 
Define a map $w$ from $\Gamma$ to $\mathcal{W}^{\omega}$ by  
$w_{\iota}:=(h_n)_n$ where $\{h_n\}_{n=1}^{\infty}$ is an approximate unit for $\mathcal{W}^{\gamma}$ 
and 
$$
w_{gg_{j}g^{-1}}:= \gamma_g(v_j)
$$
for any $g\in \Gamma$ and $1\leq j \leq N$. 
Since we have $\gamma_g(v_j)=v_j$ for any $g\in C(g_j)$, 
it can be easily checked that $w$ is well defined. 
Easy computations show $\gamma_g(x)=w_g xw_g^*$ and $\gamma_g(w_h)=w_{ghg^{-1}}$ 
for any $x\in \mathcal{W}$ and $g, h\in \Gamma$.  
\cite[Proposition 2.3]{Na6} implies $ w_g^*w_gx=w_gw_g^*x=x$ for 
any $x\in \mathcal{W}$ and $g\in \Gamma$. 
\end{proof}

The following lemma is an application of Jones' classification \cite[Corollary 6.2.5]{Jones} 
(or \cite[Theorem 2.6]{Oc}).  

\begin{lem}\label{lem:app-rep-von}
Let $\gamma$ be an outer action of a finite group $\Gamma$ on $\mathcal{W}$. 
If the characteristic invariant of $\tilde{\gamma}$ is trivial, then there exists a unitary representation 
$U$ of $\Gamma$ on $\mathcal{M}^{\omega}(\mathcal{W})$ such that 
the restriction $U|_{N(\tilde{\gamma})}$ is a unitary representation on 
$\pi_{\tau_{\mathcal{W}}}(\mathcal{W})^{''}$, 
$$
\tilde{\gamma}_g(T)=U_gTU_g^* \quad \text{and} \quad 
\tilde{\gamma}_g(U_h)= U_{ghg^{-1}} \quad \text{in} \quad \mathcal{M}^{\omega}(\mathcal{W})
$$ 
for any $T\in \pi_{\tau_{\mathcal{W}}}(\mathcal{W})^{''}$ and $g,h\in \Gamma$. 
\end{lem}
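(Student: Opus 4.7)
The plan is to apply Jones' classification of finite group actions on the injective II$_1$ factor $\mathcal{R}:=\pi_{\tau_{\mathcal{W}}}(\mathcal{W})^{''}$, combined with Connes' theorem that every automorphism of $\mathcal{R}$ is approximately inner. By the latter, each $\tilde{\gamma}_g$ is implemented by some unitary in the tracial ultrapower $\mathcal{M}^\omega(\mathcal{W}) = \mathcal{R}^\omega$; the task is therefore purely to choose these implementors coherently, so that they assemble into a group representation satisfying the covariance relation.

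For $h \in N(\tilde{\gamma})$, the triviality of the characteristic invariant of $\tilde{\gamma}$ produces, as recalled in Section~\ref{sec:pre} (the paragraph after the definition of $\Lambda(\delta)$), a unitary representation $V\colon N(\tilde{\gamma}) \to U(\mathcal{R})$ with $\tilde{\gamma}_h = \mathrm{Ad}(V_h)$ and $\tilde{\gamma}_g(V_h) = V_{ghg^{-1}}$ for all $g \in \Gamma$ and $h \in N(\tilde{\gamma})$. Setting $U_h := V_h$ for $h \in N(\tilde{\gamma})$ already fulfills the required conditions on $N(\tilde{\gamma})$ and has the added property that these $U_h$ lie in $\mathcal{R}$ itself, as required.

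To define $U_g$ for $g \notin N(\tilde{\gamma})$, I compare $\tilde{\gamma}$ with a model action supplied by Jones' classification. Since $\tilde{\gamma}$ has trivial characteristic invariant, \cite[Corollary 6.2.5]{Jones} (equivalently Ocneanu's Theorem~2.6 in \cite{Oc}) provides a model action $\tilde{\delta}$ of $\Gamma$ on $\mathcal{R}$ that is cocycle conjugate to $\tilde{\gamma}$ and for which: (i) the restriction to $N(\tilde{\gamma})$ is implemented by the prescribed $V$; and (ii) the implementors $U_g^\delta$ for $g \notin N(\tilde{\gamma})$ can be chosen from the central sequence algebra $\mathcal{M}_\omega(\mathcal{W}) = \mathcal{R}'\cap \mathcal{R}^\omega$ so that the total family forms a unitary representation of $\Gamma$ satisfying $\tilde{\delta}_g(U_h^\delta) = U_{ghg^{-1}}^\delta$. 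The central-sequence realisation for the outer part is possible precisely because the quotient action of $\Gamma / N(\tilde{\gamma})$ on $\mathcal{R}$ is approximately inner by Connes' theorem. Transporting $U^\delta$ back along a cocycle conjugacy $(\theta, w)$ from $\tilde{\delta}$ to $\tilde{\gamma}$ by
\[
U_g := \theta^\omega(U_g^\delta)\,\theta^\omega(w_g^*) \qquad (g \notin N(\tilde{\gamma}))
\]
and using $\theta \circ \tilde{\delta}_g = \mathrm{Ad}(w_g) \circ \tilde{\gamma}_g \circ \theta$ together with the cocycle identity $w_{gh} = w_g \tilde{\gamma}_g(w_h)$ then yields a unitary representation $U$ of $\Gamma$ in $\mathcal{M}^\omega(\mathcal{W})$ implementing $\tilde{\gamma}$ and satisfying the covariance $\tilde{\gamma}_g(U_h) = U_{ghg^{-1}}$.

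The main obstacle will be arranging the model $\tilde{\delta}$ and the cocycle conjugacy so that, after transport, the values on $N(\tilde{\gamma})$ are exactly the prescribed $V_h$, and not merely unitarily equivalent copies. This is where the trivialization of the characteristic invariant is used: the pair $(\lambda_{\tilde{\gamma}}, \mu_{\tilde{\gamma}})$ being a coboundary in $B(\Gamma, N(\tilde{\gamma}))$ supplies an explicit $\eta\colon N(\tilde{\gamma}) \to \mathbb{T}$ that absorbs the residual scalar discrepancies, after which the cocycle identity of $w$ and the representation property of $U^\delta$ align on the nose. The technical heart of the argument is thus the cocycle-vanishing step from Jones' and Ocneanu's classification, which we are allowed to cite as a black box.
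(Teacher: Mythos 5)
Your overall strategy --- reduce to a model action cocycle conjugate to $\tilde{\gamma}$ via Jones/Ocneanu and transport a coherent family of implementing unitaries back along the cocycle conjugacy --- is the same skeleton as the paper's proof, and the transport step itself is sound (though your formula $U_g:=\theta^{\omega}(U_g^{\delta})\theta^{\omega}(w_g^*)$ is off; with the convention $\theta\circ\tilde{\delta}_g=\mathrm{Ad}(w_g)\circ\tilde{\gamma}_g\circ\theta$ one should take $U_g:=w_g^*\theta^{\omega}(U_g^{\delta})$, and a direct computation with the cocycle identity then preserves multiplicativity, covariance, and membership of the $N(\tilde{\gamma})$-part in the factor). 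Also, the statement does not prescribe the values of $U$ on $N(\tilde{\gamma})$ in advance, so the ``main obstacle'' you describe at the end is not actually an obstacle.

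However, there are two genuine problems. First, a concrete error: you claim the implementors $U_g^{\delta}$ for $g\notin N(\tilde{\gamma})$ can be chosen in the central sequence algebra $\mathcal{M}_{\omega}(\mathcal{W})=\mathcal{R}'\cap\mathcal{R}^{\omega}$. A unitary in $\mathcal{R}'\cap\mathcal{R}^{\omega}$ commutes with every element of $\mathcal{R}$ and hence implements only the identity on $\mathcal{R}$; it cannot implement the outer automorphism $\tilde{\delta}_g$. The implementors must live in the tracial ultrapower $\mathcal{M}^{\omega}(\mathcal{W})=\mathcal{R}^{\omega}$, not in the relative commutant. Second, and more seriously, the key existence step --- that some model action admits an honest unitary representation of all of $\Gamma$ in $\mathcal{R}^{\omega}$ implementing it, satisfying $\tilde{\delta}_g(U_h^{\delta})=U_{ghg^{-1}}^{\delta}$, with $U^{\delta}|_{N}$ landing in $\mathcal{R}$ --- is exactly the content of the lemma for the model, and it is not what Jones' Corollary~6.2.5 or Ocneanu's theorem assert; those give only cocycle conjugacy of actions with the same invariants. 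The paper closes this gap by explicitly constructing a product-type model $\beta_g=\bigotimes_{n}\mathrm{Ad}(\lambda_g)$ on $M_{|G|^{\infty}}$ with $G=\Gamma/N(\tilde{\gamma})$ and $\lambda_g=\sum_{[k]\in G}e^{G}_{[gk],[k]}$: here $\lambda$ is a genuine representation of $\Gamma$ that is trivial on $N(\tilde{\gamma})$, so the finite-stage unitaries $\lambda_g^{\otimes n}\otimes 1\otimes\cdots$ form an exact, covariant representation asymptotically implementing $\beta$, and multiplying them by the conjugating cocycle yields the desired $U$. Without constructing such a model (or an equivalent device), your argument assumes the hard part of the statement.
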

\begin{proof}
Put $G:=\Gamma /N(\tilde{\gamma})$ and $B:=M_{|G|^{\infty}}$. 
Let $\lambda$ be a unitary representation of $\Gamma$ on 
$M_{|G|}(\mathbb{C})$ such that 
$\lambda_g =\sum_{[h]\in G}e_{[gh], [h]}^{G}$ for any $g\in \Gamma$. 
(Note that we have $\mathrm{Ad}(\lambda_g) (e_{[h], [k]})=e_{[gh], [gk]}$ for any $g\in \Gamma$ and 
$[h], [k]\in G$. Also, if $N(\tilde{\gamma})=\{\iota\}$, then $\lambda$ is nothing but 
the left regular representation of $\Gamma$.) 
Define an action $\beta$ of $\Gamma$ on $B\cong \bigotimes_{n=1}^{\infty}
M_{|G|}(\mathbb{C})$ by 
$$
\beta_g:= \bigotimes_{n=1}^{\infty} \mathrm{Ad}(\lambda_g)
$$
for any $g\in \Gamma$. 
Then $\tilde{\beta}$ is an action of $\Gamma$ on the injective II$_1$ factor 
$\pi_{\tau_{B}}(B)^{''}$
such that 
$N(\tilde{\beta})=N(\tilde{\gamma})$. 
Since we have $\lambda_g=1_{M_{|G|}(\mathbb{C})}$ for any $g\in N(\tilde{\gamma})$, 
the characteristic invariant of $\tilde{\beta}$ is trivial. 
Hence $\tilde{\gamma}$ is cocycle conjugate to $\tilde{\beta}$ by \cite[Corollary 6.2.5]{Jones}
(or \cite[Theorem 2.6]{Oc}). 
Therefore there exist an isomorphism $\Theta$ from $\pi_{\tau_{\mathcal{W}}}(\mathcal{W})^{''}$ 
onto $\pi_{\tau_{B}}(B)^{''}$ and a $\tilde{\beta}$-cocycle $V$ 
such that $\Theta \circ \tilde{\gamma}_g=\mathrm{Ad}(V_g)\circ \beta_g \circ \Theta$ for any 
$g\in \Gamma$. 
For any $g\in \Gamma$, put
$$
U_g:= (\Theta^{-1} (V_g)\Theta^{-1}
(\overbrace{\lambda_g \otimes \cdots \otimes \lambda_g}^{n}\otimes 
1_{M_{|G|}(\mathbb{C})} \otimes 1_{M_{|G|}(\mathbb{C})}\otimes \cdots))_n\in \mathcal{M}^{\omega}(\mathcal{W}).
$$ 
It can be easily checked that $U$ is the desired unitary representation. 
\end{proof}

\begin{lem}\label{lem:non-rep}
Let $\gamma$ be an outer action of a finite group $\Gamma$ on $\mathcal{W}$. 
Assume that the characteristic invariant of $\tilde{\gamma}$ is trivial and 
$\gamma|_{C(g)}$ has property W for any $g\in \Gamma$. Then there exist a unitary representations 
$V$ of $N(\tilde{\gamma})$ on $\pi_{\tau_{\mathcal{W}}}(\mathcal{W})^{''}$ and a map $v$ from 
$\Gamma$ to $\mathcal{W}^{\omega}$ such that  $\varrho \circ v$ is a unitary representation of 
$\Gamma$ on $\mathcal{M}^{\omega}(\mathcal{W})$, 
$$
\gamma_g (x)= v_gxv_g^*,  \quad 
\gamma_g(v_h)=v_{ghg^{-1}}, \quad v_g^*v_gx=v_gv_g^*x=x
$$
in $\mathcal{W}^{\omega}$ and 
$$
\varrho (v_k)=V_k
$$
for any $x\in \mathcal{W}$, $g,h\in \Gamma$ and $k\in N(\tilde{\gamma})$. 
\end{lem}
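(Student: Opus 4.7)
The plan is to combine the C*-algebraic implementation $w$ provided by Lemma \ref{lem:strongly-app-inner} with the unitary representation $U$ on the von Neumann algebraic side provided by Lemma \ref{lem:app-rep-von}, using the surjection $\varrho : F(\mathcal{W}) \twoheadrightarrow \mathcal{M}_\omega(\mathcal{W})$ to correct $w$ so that its image becomes $U$. I first apply Lemma \ref{lem:strongly-app-inner} to obtain $w : \Gamma \to \mathcal{W}^\omega$ satisfying $\gamma_g(x) = w_g x w_g^*$, $w_g^* w_g x = w_g w_g^* x = x$, and $\gamma_g(w_h) = w_{ghg^{-1}}$, and I apply Lemma \ref{lem:app-rep-von} to obtain a unitary representation $U : \Gamma \to U(\mathcal{M}^\omega(\mathcal{W}))$ with $U|_{N(\tilde\gamma)}$ taking values in $\pi_{\tau_\mathcal{W}}(\mathcal{W})''$, implementing $\tilde\gamma$, and with $\tilde\gamma_g(U_h) = U_{ghg^{-1}}$. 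Setting $V_k := U_k$ for $k \in N(\tilde\gamma)$ immediately produces the required unitary representation of $N(\tilde\gamma)$ on $\pi_{\tau_\mathcal{W}}(\mathcal{W})''$.

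Because $w_g^* w_g$ acts as the identity on every element of an approximate unit of $\mathcal{W}$, $\varrho(w_g)$ is a unitary in $\mathcal{M}^\omega(\mathcal{W})$ implementing $\tilde\gamma_g$ on $\pi_{\tau_\mathcal{W}}(\mathcal{W})''$; hence $c_g := \varrho(w_g)^* U_g$ commutes with $\pi_{\tau_\mathcal{W}}(\mathcal{W})''$, i.e.\ lies in $\mathcal{M}_\omega(\mathcal{W})$, and the equivariance of $w$ and $U$ yields $\tilde\gamma_g(c_h) = c_{ghg^{-1}}$. My aim is then to lift the map $c : \Gamma \to U(\mathcal{M}_\omega(\mathcal{W}))$ to an equivariant map $\tilde c : \Gamma \to \mathcal{W}^\omega \cap \mathcal{W}'$ whose values are unitaries at infinity (meaning $\tilde c_g^* \tilde c_g x = \tilde c_g \tilde c_g^* x = x$ for all $x \in \mathcal{W}$) and which satisfies $\gamma_g(\tilde c_h) = \tilde c_{ghg^{-1}}$. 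Granted such a lift, the definition $v_g := w_g \tilde c_g$ gives (after a short check exploiting that $\tilde c_g$ commutes with $\mathcal{W}$) all four required relations for $v$, together with $\varrho(v_g) = \varrho(w_g) c_g = U_g$, which in turn secures both the representation property of $\varrho \circ v$ in $\mathcal{M}^\omega(\mathcal{W})$ and the compatibility $\varrho(v_k) = V_k$ on $N(\tilde\gamma)$.

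The main obstacle is the equivariant construction of $\tilde c$. I pick representatives $g_1, \dots, g_N$ of the conjugacy classes of $\Gamma$; since $c_{g_j}$ is fixed by $\tilde\gamma|_{C(g_j)}$, it suffices to lift it to a $C(g_j)$-invariant element of $\mathcal{W}^\omega \cap \mathcal{W}'$ that is a unitary at infinity, and then to set $\tilde c_{k g_j k^{-1}} := \gamma_k(\tilde c_{g_j})$, which is well defined precisely by the $C(g_j)$-invariance and automatically yields $\gamma_g(\tilde c_h) = \tilde c_{ghg^{-1}}$. The $C(g_j)$-equivariant lift will be produced by first lifting $c_{g_j}$ via the surjectivity of $\varrho : F(\mathcal{W}) \twoheadrightarrow \mathcal{M}_\omega(\mathcal{W})$, then averaging over the finite group $C(g_j)$ to enforce invariance (at the cost of contractivity), and finally repairing unitarity at infinity by functional calculus inside $F(\mathcal{W})$ while invoking Proposition \ref{pro:small-projection} to cope with the non-unital structure of $\mathcal{W}$. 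The delicate point is coordinating equivariance, centrality relative to $\mathcal{W}$, and the unitary-at-infinity property simultaneously, and this is where property W together with the structural results assembled in Section \ref{sec:pre} will do the essential work.
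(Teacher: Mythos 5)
Your proposal is correct and follows essentially the same route as the paper: combine the map $w$ from Lemma \ref{lem:strongly-app-inner} with the representation $U$ from Lemma \ref{lem:app-rep-von}, observe that the defect $\varrho(w_{g_j})^*U_{g_j}$ is a unitary in $\mathcal{M}_{\omega}(\mathcal{W})^{\tilde{\gamma}|_{C(g_j)}}$ at each conjugacy-class representative, lift it to $F(\mathcal{W})^{\gamma|_{C(g_j)}}$, propagate by conjugation, and set $v_g:=w_g\tilde{c}_g$. The only difference is in the lifting step, where the paper avoids your ``lift, average, repair unitarity'' detour by lifting the unitary directly to a unitary of $F(\mathcal{W})^{\gamma|_{C(g_j)}}$, using that $\varrho$ is surjective and that $\mathcal{M}_{\omega}(\mathcal{W})^{\tilde{\gamma}|_{C(g_j)}}$ is a von Neumann algebra (so its unitaries are exponentials of self-adjoint elements and hence lift), averaging only at the level of representatives to make the lift genuinely $C(g_j)$-invariant.
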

\begin{proof}
Take representatives $\iota$, $g_1$,...,$g_N$ of all the distinct conjugacy classes of $\Gamma$, 
a map $w$ from $\Gamma$ to $\mathcal{W}^{\omega}$ as in Lemma \ref{lem:strongly-app-inner}  
and a unitary representation $U$ of $\Gamma$ on $\mathcal{M}^{\omega}(\mathcal{W})$ as in 
Lemma \ref{lem:app-rep-von}. 
By properties of $w$ and $U$, we have $\varrho (w_g^*) U_g\pi_{\tau_{\mathcal{W}}}(x)=
\pi_{\tau_{\mathcal{W}}}(x)\varrho (w_g^*)U_g$ and 
$\tilde{\gamma}_h(\varrho (w_g^*)U_g)= \varrho (w_{hgh^{-1}}^*)U_{hgh^{-1}}
=\varrho (w_g^*)U_g$ for any $x\in \mathcal{W}$, $g\in \Gamma$ and $h\in C(g)$. Hence 
$\varrho (w_g^*)U_g$ is a unitary element in 
$\mathcal{M}_{\omega}(\mathcal{W})^{\tilde{\gamma}|_{C(g)}}$. 
Therefore, for any $1\leq j \leq N$, there exists a unitary element $[z_{j}]$ in 
$F(\mathcal{W})^{\gamma|_{C(g_j)}}$ such that $\varrho([z_{j}])=\varrho (w_{g_j}^*)U_{g_j}$
because $\varrho$ is surjective and 
$\mathcal{M}_{\omega}(\mathcal{W})^{\tilde{\gamma}|_{C(g)}}$ is a von Neumann algebra. 
Replacing $z_{j}$ with $\frac{1}{|C(g_j)|}\sum_{g\in C(g_j)} \gamma_g(z_{j})$, 
we may assume that $\gamma_g(z_{j})=z_{j}$ for any $g\in C(g_j)$. 
Define a map $z$ from $\Gamma$ to $\mathcal{W}^{\omega}\cap \mathcal{W}^{\prime}$ by 
$z_{\iota}:= (h_n)_n$ where $\{h_n\}_{n\in\mathbb{N}}$ is an approximate unit for $\mathcal{W}^{\gamma}$ 
and 
$$
z_{gg_{j}g^{-1}}:= \gamma_g(z_{j}) 
$$
for any $g\in \Gamma$ and $1\leq j \leq N$. 
Similar arguments as in the proof of Lemma \ref{lem:strongly-app-inner} show that $z$ is a well 
defined map such that $\gamma_g(z_h)=z_{ghg^{-1}}$ and $z_g^*z_gx=z_gz_g^*x=x$
for any $x\in \mathcal{W}$ and $g, h\in\Gamma$.  Put $v_g:= w_gz_g$ for any $g\in \Gamma$ 
and $V:=U|_{N(\tilde{\gamma})}$. Then it can be easily checked that $v$ and $V$ have the desired 
property. 
\end{proof}

The following theorem is the main theorem in this section. 

\begin{thm}\label{thm:app-rep}
Let $\gamma$ be an outer action of a finite group $\Gamma$ on $\mathcal{W}$. 
Assume that the characteristic invariant of $\tilde{\gamma}$ is trivial and 
$\gamma|_{C(g)}$ has property W for any $g\in \Gamma$. Then there exist a unitary representations 
$V$ of $N(\tilde{\gamma})$ on $\pi_{\tau_{\mathcal{W}}}(\mathcal{W})^{''}$ and 
maps $u$ and $w$ from $\Gamma$ to $\mathcal{W}^{\omega}$ such that 
$$
\gamma_g (x)= u_gxu_g^*,  \quad w_gx=xw_g, \quad \gamma_g(u_h)x=w_g^*u_{ghg^{-1}}w_gx,  \quad 
\gamma_g(w_h)= w_h, 
$$
$$
u_{gh}x=u_gu_hx,  \quad w_{gh}x=w_{g}w_{h}x, \quad 
u_{g}^*u_gx=u_{g}u_{g}^*x=x, \quad  w_{g}^*w_gx=w_{g}w_{g}^*x=x
$$
in $\mathcal{W}^{\omega}$ and 
$$
\varrho (u_k)=V_k, \quad \varrho (w_g)= 1_{\mathcal{M}_{\omega}(\mathcal{W})}
$$
in $\mathcal{M}^{\omega}(\mathcal{W})$ 
 for any $x\in \mathcal{W}$, $g,h\in \Gamma$ and $k\in N(\tilde{\gamma})$.
\end{thm}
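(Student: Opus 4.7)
The plan is to start from the output of Lemma \ref{lem:non-rep}, which already supplies a map $v:\Gamma\to\mathcal{W}^\omega$ and a unitary representation $V$ of $N(\tilde{\gamma})$ satisfying almost all the conditions required of the pair $(u,V)$ in the target theorem, except that $v$ is in general not a group homomorphism from $\Gamma$ to $U((\mathcal{W}^\sim)^\omega)$. Concretely, Lemma \ref{lem:non-rep} provides $\gamma_g(x)=v_gxv_g^*$, $\gamma_g(v_h)=v_{ghg^{-1}}$, the partial-isometry relations $v_g^*v_gx=v_gv_g^*x=x$, and $\varrho(v_k)=V_k$ for $k\in N(\tilde{\gamma})$, together with the statement that $\varrho\circ v$ is a genuine unitary representation of $\Gamma$ on $\mathcal{M}^\omega(\mathcal{W})$. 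The entire task is to absorb the cocycle defect of $v$ into a $\gamma$-fixed central sequence $w$.

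First I would set $z_{g,h}:=v_{gh}^*v_gv_h\in\mathcal{W}^\omega$ and verify, by direct manipulation of the relations satisfied by $v$, that (i) $z_{g,h}x=xz_{g,h}$ for every $x\in\mathcal{W}$, so $z_{g,h}$ descends to an element of $F(\mathcal{W})$; (ii) $\gamma_k(z_{g,h})=z_{kgk^{-1},khk^{-1}}$, so the orbit-average of $z$ lies in $F(\mathcal{W})^\gamma$; (iii) $z$ satisfies a $\gamma$-twisted $2$-cocycle identity inherited from the associativity of $v_gv_hv_k$; (iv) $\varrho(z_{g,h})=1$ because $\varrho\circ v$ is a strict representation. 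In particular, each $z_{g,h}$ represents a class in $\ker\varrho|_{F(\mathcal{W})^\gamma}$.

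Next I would build the central cobounding $w$. Property W (which holds for $\gamma$ because $\gamma=\gamma|_{C(\iota)}$) combined with Theorem \ref{thm:property-W} and Proposition \ref{pro:small-projection} yields an abundant supply of projections of prescribed trace, together with Murray--von Neumann equivalences between them, inside $\ker\varrho|_{F(\mathcal{W})^\gamma}$. This is enough to realize any unitary representation of the finite group $\Gamma$ inside $\ker\varrho|_{F(\mathcal{W})^\gamma}$, in exactly the same spirit that one embeds the regular representation of $\Gamma$ into a McDuff II$_1$ central sequence algebra. Enumerating $\Gamma$, I would inductively construct $w:\Gamma\to\mathcal{W}^\omega$ in $\ker\varrho|_{F(\mathcal{W})^\gamma}$ such that $w_\iota$ is an approximate unit, $w_{gh}x=w_gw_hx$, $\gamma_g(w_h)=w_h$, $\varrho(w_g)=1$, and the cobounding identity $w_gw_hw_{gh}^*=z_{g,h}$ holds. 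The fact that $z$ is both $\gamma$-covariant and trivial under $\varrho$ is precisely what permits the splitting $w$ to live simultaneously in the $\gamma$-fixed subalgebra and in $\ker\varrho$; the triviality of the characteristic invariant $\Lambda(\tilde{\gamma})$ is used implicitly through Lemma \ref{lem:non-rep}, which makes $V$ a representation rather than only a projective representation.

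Finally I would define $u_g:=v_gw_g^*$, and verify by direct computation that the list of relations in Theorem \ref{thm:app-rep} all hold: the partial-isometry and implementation relations $\gamma_g(x)=u_gxu_g^*$ are inherited from $v$; $u_gu_hx=v_gw_g^*v_hw_h^*x=v_gv_hw_g^*w_h^*x=v_{gh}z_{g,h}w_h^*w_g^*x=v_{gh}w_{gh}^*x=u_{gh}x$; the twisted covariance $\gamma_g(u_h)x=w_g^*u_{ghg^{-1}}w_gx$ follows from $\gamma_g(v_h)=v_{ghg^{-1}}$ combined with $\gamma_g(w_h)=w_h$ and the centrality of $w$; and $\varrho(u_k)=\varrho(v_k)=V_k$ for $k\in N(\tilde{\gamma})$ because $\varrho(w_g)=1$. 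The main obstacle will be the inductive construction of $w$ in the third paragraph: arranging, all at once, the group-representation relation, $\gamma$-invariance, triviality under $\varrho$, and the cobounding identity for $z$. This is where property W, together with the triviality of $\Lambda(\tilde{\gamma})$, enters essentially.
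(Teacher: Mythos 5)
There is a genuine gap, and it sits exactly where you flagged ``the main obstacle'': the list of properties you impose on $w$ in your third paragraph is internally inconsistent. If $w_g$ commutes with $\mathcal{W}$, satisfies $w_g^*w_gx=w_gw_g^*x=x$, and is multiplicative in the sense $w_{gh}x=w_gw_hx$, then taking adjoints and using centrality gives $w_{gh}^*x=w_h^*w_g^*x$, whence $w_gw_hw_{gh}^*x=w_gw_hw_h^*w_g^*x=x$ for every $x\in\mathcal{W}$. So any such $w$ cobounds only the \emph{trivial} cocycle: the identity $w_gw_hw_{gh}^*=z_{g,h}$ forces $z_{g,h}x=x$, which is precisely the multiplicativity of $v$ that Lemma \ref{lem:non-rep} does \emph{not} provide (if it did, Theorem \ref{thm:app-rep} would be nearly immediate from that lemma). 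Concretely, your final verification then fails: with $u_g:=v_gw_g^*$ one gets $u_gu_hx=v_{gh}w_{gh}^*z_{g,h}x$ versus $u_{gh}x=v_{gh}w_{gh}^*x$, and the defect $z_{g,h}$ has not been absorbed --- it has merely been carried along. The underlying point is that you are asking one map $w$ to do two incompatible jobs: cobound the $2$-cocycle defect of $v$ (which requires a \emph{non}-multiplicative $1$-cochain) and serve as the multiplicative, $\gamma$-fixed twist appearing in $\gamma_g(u_h)x=w_g^*u_{ghg^{-1}}w_gx$.

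The paper separates these two roles. Using property W it embeds matrix units $\{E^{\Gamma}_{g,h}\}_{g,h\in\Gamma}$ into $F(\mathcal{W})^{\gamma}$ (commuting with the $v_g$ and with a small projection $p\in\ker\varrho|_{F(\mathcal{W})^{\gamma}}$ that dominates the defects $1-[v_gv_hv_{gh}^*]$, via Proposition \ref{pro:small-projection}), and sets
$$
z_g:=(h_n-p_n)_n+\Bigl(\sum_{h\in\Gamma}v_gv_hv_{gh}^*E^{\Gamma}_{h,gh}+\sum_{i=|\Gamma|+1}^{2^N}E_{i,i}\Bigr)(p_n)_n,
\qquad u_g:=z_g^*v_g .
$$
This $z_g$ is central and unitary on $\mathcal{W}$ but deliberately \emph{not} multiplicative; the telescoping identity $v_gz_hv_g^*\,z_g\,z_{gh}^*x=v_gv_hv_{gh}^*x$, which is what makes $u$ multiplicative, works only because the cocycle values ride on the left-translation matrix units $E^{\Gamma}_{h,gh}$, not on a group-like family. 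The map $w_g$ in the statement is then a \emph{separate} object, the permutation unitary $\sum_hE^{\Gamma}_{ghg^{-1},h}$ cut down by $p$, which is multiplicative, $\gamma$-fixed, trivial under $\varrho$, and accounts for the twist in the covariance relation. Your overall intuition (start from Lemma \ref{lem:non-rep}, observe the defect lies in $\ker\varrho|_{F(\mathcal{W})^{\gamma}}$, use property W to kill it) is the right one, but the absorption must go through this twisted-regular-representation device rather than through a multiplicative coboundary.
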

\begin{proof}
Note that $\gamma=\gamma|_{C(\iota)}$ has property W. Let $\{h_n\}_{n\in\mathbb{N}}$ be an 
approximate unit for $\mathcal{W}^{\gamma}$. 
Take a unitary representations $V$ of $N(\tilde{\gamma})$ on 
$\pi_{\tau_{\mathcal{W}}}(\mathcal{W})^{''}$ and a map $v$ from 
$\Gamma$ to $\mathcal{W}^{\omega}$ as in Lemma \ref{lem:non-rep}. 
Since we have $v_gv_hv_{gh}^*x=xv_{g}v_hv_{gh}^*$ and $\varrho (v_{g}v_hv_{gh}^*)
=1_{\mathcal{M}_{\omega}(\mathcal{W})}$ 
for any $x\in \mathcal{W}$ and $g,h\in \Gamma$, 
$F:=\{1_{F(\mathcal{W})}-[v_gv_hv_{gh}^*]\; |\; g,h\in \Gamma \}$ 
is a finite subset of $\mathrm{ker}\; \varrho|_{F(\mathcal{W})^{\gamma}}$. 
Applying Proposition \ref{pro:small-projection} to $F$, 
we obtain a projection $p=[(p_n)_n]$ in $\mathrm{ker}\;\varrho|_{F(\mathcal{W})^{\gamma}}$ 
such that $p(1_{F(\mathcal{W})}-[v_gv_hv_{gh}^*])
=(1_{F(\mathcal{W})}-[v_gv_hv_{gh}^*])p=1_{F(\mathcal{W})}-[v_gv_hv_{gh}^*]$ for any $g,h \in \Gamma$. 
Replacing $(p_n)_n$ with $(\frac{1}{|\Gamma|}\sum_{g\in \Gamma}\gamma_g(p_n))_n$, 
we may assume that 
$(p_n)_n$ is a positive contraction in $(\mathcal{W}^{\gamma})^{\omega}\cap \mathcal{W}^{\prime}$. 
Note that we have 
$$
(h_n-p_n)_n^2x=(h_n-p_n)_nx=(h_n-p_n)_nv_{g}v_{h}v_{gh}^*x=v_{g}v_{h}v_{gh}^*(h_n-p_n)_n x
$$ 
in $\mathcal{W}^{\omega}$ for any $x\in \mathcal{W}$ and $g,h\in\Gamma$.  

Since $\gamma$ has property W, $\gamma$ is cocycle conjugate to 
$\gamma\otimes \mathrm{id}_{M_{2^{\infty}}}$ on $\mathcal{W}\otimes M_{2^{\infty}}$. (See 
\cite[Section 2]{Na7}.)  
Hence $F(\mathcal{W})^{\gamma}$ is isomorphic to 
$F(\mathcal{W}\otimes M_{2^{\infty}})^{\gamma\otimes \mathrm{id}_{M_{2^{\infty}}}}$. 
Choose a natural number $N$ with $2^{N}\geq |\Gamma |$. 
Since $M_{2^{\infty}}$ is isomorphic to $\bigotimes_{n\in\mathbb{N}}M_{2^{N}}(\mathbb{C})$, 
we see that there exists a system of matrix units $\{e_{i,j}\}_{i,j=1}^{2^{N}}$ in $F(\mathcal{W})^{\gamma}$. 
We identify a subsystem $\{e_{i,j}\}_{i,j=1}^{|\Gamma|}$ of $\{e_{i,j}\}_{i,j=1}^{2^{N}}$
with $\{e^{\Gamma}_{g,h}\}_{g,h\in \Gamma}$. 
Let $\{E_{i,j}\}_{i,j=1}^{2^{N}}$ and $\{E^{\Gamma}_{g,h}\}_{g,h\in \Gamma}$ 
be representatives of $\{e_{i,j}\}_{i,j=1}^{2^{N}}$ and $\{e^{\Gamma}_{g,h}\}_{g,h\in \Gamma}$, respectively. 
We may assume that $\{E_{i,j}\}_{i,j=1}^{2^{N}}$ and $\{E^{\Gamma}_{g,h}\}_{g,h\in \Gamma}$ are in 
$(\mathcal{W}^{\gamma})^{\omega}\cap \mathcal{W}^{\prime}
\cap\{v_g\; |\; g\in \Gamma\}^{\prime}\cap \{(p_n)_n\}^{\prime}$ 
by replacing suitable sequences as in $(p_n)_n$ and taking suitable subsequences. 

For any $g\in \Gamma$, put
$$
z_g:= (h_n- p_n)_n +\left(\sum_{h\in \Gamma}v_gv_hv_{gh}^*E_{h, gh}^{\Gamma} 
+ \sum_{i=|\Gamma|+1}^{2^N} E_{i,i}\right)(p_n)_n \in\mathcal{W}^{\omega}
\cap \mathcal{W}^{\prime}
$$
and 
$$
u_g:= z_g^*v_g\in \mathcal{W}^{\omega}.
$$
Since we have $\varrho (z_g)= \varrho((h_n)_n)=1$ for any $g\in\Gamma$, 
$$
\varrho (u_k)=V_k 
$$
for any $k\in N(\tilde{\gamma})$. 
Similar computations as in the proof of \cite[Lemma 2.4]{Na6} show 
\begin{align*}
\left(\sum_{k\in \Gamma}v_gv_kv_{gk}^*E_{k, gk}^{\Gamma}\right)^* &
\left(\sum_{k\in \Gamma}v_gv_kv_{gk}^*E_{h, gk}^{\Gamma}\right) x \\
&=\sum_{k\in \Gamma}E_{k,k}^{\Gamma}x
=\left(\sum_{k\in \Gamma}v_gv_kv_{gk}^*E_{k, gk}^{\Gamma}\right)
\left(\sum_{k\in \Gamma}v_gv_kv_{gk}^*E_{h, gk}^{\Gamma}\right)^*x 
\end{align*}
and
\begin{align*}
& v_g\left(\sum_{k\in \Gamma}v_hv_kv_{hk}^*E_{k, hk}^{\Gamma}\right)v_g^*
\left(\sum_{h\in \Gamma}v_gv_hv_{gh}^*E_{h, gh}^{\Gamma}\right)
\left(\sum_{k\in \Gamma}v_{gh}v_{k}v_{ghk}^*E_{k, ghk}^{\Gamma}\right)^*
\sum_{k\in\Gamma} E_{k,k}^{\Gamma}x \\
& =v_gv_hv_{gh}^*\sum_{k\in\Gamma} E_{k,k}^{\Gamma}x
\end{align*}
for any $x\in \mathcal{W}$ and $g,h\in \Gamma$. 
Hence it can be easily checked that we have 
$$
z_g^*z_gx= z_gz_g^*x=x\quad \text{and} \quad v_gz_hv_g^*z_{g}z_{gh}^* x=v_{g}v_{h}v_{gh}^*x
$$
for any $x\in \mathcal{W}$ and $g,h\in \Gamma$. Since we have 
$v_gx=\gamma_g(x)v_g$ and $z_gx=xz_g$ for any $x\in \mathcal{W}$ and $g\in \Gamma$, 
these equalities imply 
$$
u_g^*u_g x=u_gu_g^* x=x, \quad u_g xu_g^*= \gamma_g(x)
$$
and 
$$
u_gu_hx=u_{gh}x  
$$
for any $x\in\mathcal{W}$ and $g,h\in \Gamma$. 
For any $g\in \Gamma$, put 
$$
w_g:=(h_n-p_n)_n +\left(\sum_{h\in \Gamma} E_{ghg^{-1}, h}^{\Gamma} + \sum_{i=|\Gamma|+1}^{2^{N}}E_{i,i}
\right)(p_n)_n\in 
(\mathcal{W}^{\gamma})^{\omega}\cap \mathcal{W}^{\prime}.
$$
Then we have 
$$
w_{gh}x=w_{g}w_{h}x, \quad w_g^*w_gx=w_gw_g^*x=x \quad \text{and} 
\quad \varrho(w_g)=1_{\mathcal{M}_{\omega}(\mathcal{W})} \
$$
for any $g\in\Gamma$. Note that we have 
\begin{align*}
&\left(\sum_{k\in \Gamma} E_{gkg^{-1}, k}^{\Gamma}\right)^*
\left(\sum_{k\in \Gamma}v_{ghg^{-1}}v_{k}v_{ghg^{-1}k}^*E_{k, ghg^{-1}k}^{\Gamma}\right)^*
v_{ghg^{-1}}\left(\sum_{k\in \Gamma} E_{gkg^{-1}, k}^{\Gamma}\right) x \\
&=\left(\sum_{k\in \Gamma} E_{k, gkg^{-1}}^{\Gamma}\right)
\left(\sum_{k\in \Gamma}\sum_{k^{\prime}\in \Gamma }v_{ghg^{-1}k}v_{k}^*
v_{ghg^{-1}}^*E_{ghg^{-1}k, k}^{\Gamma}E_{gk^{\prime}g^{-1}, k^{\prime}}
\gamma_{ghg^{-1}}(x) v_{ghg^{-1}} \right)\\
&= \left(\sum_{k\in \Gamma} E_{k, gkg^{-1}}^{\Gamma}\right)
\left(\sum_{k^{\prime}\in \Gamma }v_{ghk^{\prime}g^{-1}}v_{gk^{\prime}g^{-1}}^*
v_{ghg^{-1}}^*E_{ghk^{\prime}g^{-1}, k^{\prime}}^{\Gamma}
\gamma_{ghg^{-1}}(x)v_{ghg^{-1}} \right) \\
&= \sum_{k\in \Gamma} 
\sum_{k^{\prime}\in \Gamma }\gamma_g(v_{hk^{\prime}})\gamma_g(v_{k^{\prime}}^*)
\gamma_g(v_{h}^*)E_{k, gkg^{-1}}^{\Gamma}E_{ghk^{\prime}g^{-1}, k^{\prime}}^{\Gamma}
\gamma_{ghg^{-1}}(x) \gamma_g(v_{h}) \\
&= \sum_{k^{\prime}\in \Gamma }\gamma_g(v_{hk^{\prime}})\gamma_g(v_{k^{\prime}}^*)
\gamma_g(v_{h}^*)E_{hk^{\prime}, k^{\prime}}^{\Gamma}\gamma_{ghg^{-1}}(x) \gamma_g(v_{h}) \\
&= \gamma_g\left(\left(\sum_{k^{\prime}\in \Gamma}v_{h}v_{k^{\prime}}v_{hk^{\prime}}^* 
E_{k^{\prime}, hk^{\prime}}^{\Gamma}\right)^* v_{h}\right) x
\end{align*}
for any $x\in \mathcal{W}$ and $g, h\in\Gamma$.  
Hence it can be easily checked that we have  
$$
w_g^*u_{ghg^{-1}}w_gx=w_g^* z_{ghg^{-1}}^{*}v_{ghg^{-1}}w_gx= \gamma_g(z_{h}^*v_{h})x
=\gamma_g(u_h)x 
$$
for any $x\in \mathcal{W}$ and $g, h\in\Gamma$.
Consequently, $u$ and $w$ are the desired maps. 
\end{proof}

\section{Absorption of actions with trivial characteristic invariant} \label{sec:abs}

In this section we shall show an absorption result for certain finite group actions with trivial 
characteristic invariant (in $\pi_{\tau_{\mathcal{W}}}(\mathcal{W})^{''}$)  by using 
Szab\'o's approximate cocycle intertwining argument in \cite{Sza7} (see also \cite{Ell2} and 
\cite[Section 5]{Na5}). 
Note that this result is a key ingredient in the proof of the main classification theorems in the 
next section. 

Let $\alpha$ be an outer action of a countable discrete amenable group 
$\Gamma$ on $\mathcal{W}$, and let $(\Psi, u)$ be a sequential asymptotic cocycle morphism 
from $(\mathcal{W}, \alpha)$ to $(\mathcal{W}, \alpha)$ such that 
$$
\Psi (x)=(x)_n \quad \text{and} \quad \varrho(u_h) =1_{\mathcal{M}^{\omega}(\mathcal{W})}
$$ 
for any $x\in \mathcal{W}$ and $h\in N(\tilde{\alpha})$.
Define a homomorphism $\Phi$ from $\mathcal{W}$ to $M_2(\mathcal{W})^{\omega}$ by 
$$
\Phi (x):= \left(\left(\begin{array}{cc}
                         x   &     0    \\ 
                         0   &     x          
 \end{array} \right)\right)_n
$$
for any $x\in\mathcal{W}$. Since $(\Psi ,u)$  is a sequential asymptotic cocycle morphism 
from $(\mathcal{W}, \alpha)$ to $(\mathcal{W}, \alpha)$ such that 
$\tau_{\mathcal{W}, \omega}\circ \Psi=\tau_{\mathcal{W}}$, we can define an action 
$\alpha^{\omega}$ of $\Gamma$ on $F(\Phi (\mathcal{W}), M_2(\mathcal{W}))$ by 
$$
\alpha_g^{\omega}:= \mathrm{Ad}\left( \left(\begin{array}{cc}
                         1_{M(\mathcal{W})}   &     0    \\ 
                         0   &     u_g          
 \end{array} \right)\right)\circ \alpha_g\otimes\mathrm{id}_{M_2(\mathbb{C})}
$$
for any $g\in \Gamma$ as in \cite[Section 6]{Na5}.

\begin{pro}
With notation as above, if $a$ and $b$ are positive elements in $F(\Phi (\mathcal{W}), 
M_2(\mathcal{W}))^{\alpha^{\omega}}$ satisfying 
$
d_{\tau_{M_2(\mathcal{W}), \omega}}(a)< 
d_{\tau_{M_2(\mathcal{W}), \omega}}(b)
$,
then there exists an element $r$ in 
$F(\Phi (\mathcal{W}), M_2(\mathcal{W}))^{\alpha^{\omega}}$ such that $r^*br=a$. 
\end{pro}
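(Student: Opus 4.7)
The plan is to imitate the proof of Proposition \ref{pro:strict-comparison-central}, which itself adapts the strategy of \cite[Lemma 2.6]{Na6}. The only substantive new feature here is that the ``twist'' on the second diagonal entry no longer comes from an inner automorphism $\gamma_{g_0}$ inside $\Phi_{g_0}$, but is instead built into the action itself via the cocycle $\mathrm{diag}(1_{M(\mathcal{W})}, u_g)$. The hypothesis $\varrho(u_h)=1_{\mathcal{M}^\omega(\mathcal{W})}$ for $h\in N(\tilde\alpha)$ is exactly what makes the previous argument carry over in this setting.

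First I would pick positive contraction representatives $(a_n)_n$ and $(b_n)_n$ in $M_2(\mathcal{W})^\omega\cap\Phi(\mathcal{W})'$ and reduce, via a standard use of the strict tracial gap, to proving that $(a-\varepsilon)_+$ is Cuntz subequivalent to $b$ inside $F(\Phi(\mathcal{W}), M_2(\mathcal{W}))^{\alpha^\omega}$ for all sufficiently small $\varepsilon>0$. The task is then to produce, for fixed such $\varepsilon$, an $\alpha^\omega$-invariant witness $r\in F(\Phi(\mathcal{W}), M_2(\mathcal{W}))$ with $r^*br=(a-\varepsilon)_+$.

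Next I would apply the property~W machinery of Theorem \ref{thm:property-W} to $\alpha$ (which is available in the absorption context of Section \ref{sec:abs}, via the implicit $\mathcal{W}$-absorption or property~W assumption) to obtain projections in $F(\mathcal{W})^\alpha$ with prescribed tracial values, amplify them diagonally into $F(\Phi(\mathcal{W}), M_2(\mathcal{W}))$, and use these as interpolating projections between hereditary subalgebras of $a$ and $b$. The crucial observation is that the twist $\mathrm{diag}(1, u_g)$ causes no obstruction at the tracial level: for $h\in N(\tilde\alpha)$ the assumption $\varrho(u_h)=1$ implies that the induced action of $\alpha^\omega$ on $\mathcal{M}_\omega(\Phi(\mathcal{W}), M_2(\mathcal{W}))$ coincides with the untwisted action $\tilde\alpha\otimes\mathrm{id}_{M_2}$, so strict comparison at the tracial ultrapower reduces to the factor case already handled in the previous proposition. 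An equivariant property~(SI) step, parallel to the one in Proposition \ref{pro:strict-comparison-central}, then upgrades this tracial comparison to Cuntz subequivalence inside the equivariant central sequence algebra.

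The main obstacle will be the bookkeeping around the cocycle $u$ in the averaging step: whenever one averages a candidate witness over $\Gamma$ to enforce $\alpha^\omega$-invariance, the twist $\mathrm{diag}(1, u_g)$ must be tracked carefully, and one must verify that the resulting element still commutes with $\Phi(\mathcal{W})$ and is genuinely (not just approximately) $\alpha^\omega$-fixed. Once this is set up and combined with $\varrho(u_h)=1$ on $N(\tilde\alpha)$, the remaining comparison argument is a direct transcription of the proof of Proposition \ref{pro:strict-comparison-central}, with $\gamma|_{C(g_0)}$ replaced by $\alpha^\omega$ throughout.
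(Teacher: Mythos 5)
Your proposal diverges from the paper's proof and has a genuine gap. The paper's argument is a short reduction: by \cite[Proposition 3.11]{Na5} the comparison statement follows once one knows that $\mathcal{M}_{\omega}(\Phi(\mathcal{W}), M_2(\mathcal{W}))^{\tilde{\alpha}^{\omega}}$ is a \emph{factor}. The hypothesis $\varrho(u_h)=1_{\mathcal{M}^{\omega}(\mathcal{W})}$ for $h\in N(\tilde{\alpha})$ is used exactly as you say to see that $\tilde{\alpha}^{\omega}_h=\tilde{\alpha}_h\otimes\mathrm{id}_{M_{2}(\mathbb{C})}=\mathrm{Ad}\left(\begin{smallmatrix} V_h & 0\\ 0 & V_h\end{smallmatrix}\right)$ acts trivially on the relative commutant, so the action descends to $\Gamma/N(\tilde{\alpha})$; but the actual content is that this quotient action is sufficiently outer (via \cite[Theorem 3.2]{C3} and \cite[Lemma 5.6]{Oc}, using that $\tilde{\alpha}_g\otimes\mathrm{id}_{M_2(\mathbb{C})}$ is outer for $g\notin N(\tilde{\alpha})$) so that its fixed point algebra is a factor by \cite[Proposition 3.14]{Na5}. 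Your proposal only discusses the elements of $N(\tilde{\alpha})$ and never addresses why the elements outside $N(\tilde{\alpha})$ cause no loss of factoriality; without that, the tracial comparison step you invoke is unjustified, since in a non-factor fixed point algebra a single trace inequality does not govern comparison of positive elements.

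Two further problems. First, you lean on property W for $\alpha$, but the proposition makes no such assumption (property W enters only in the subsequent uniqueness theorem), and Theorem \ref{thm:property-W} supplies property W only for $\mathcal{W}$-absorbing actions; so the projections with prescribed tracial values that you want in $F(\mathcal{W})^{\alpha}$ are not available from the stated hypotheses. Second, even granting them, the diagonal amplification of a projection $p\in F(\mathcal{W})^{\alpha}$ is not $\alpha^{\omega}$-invariant in general: $\alpha_g^{\omega}$ sends the diagonal element with entries $(p,p)$ to the one with entries $(p,\, u_gpu_g^{*})$, and $u_g$ lives in $U((\mathcal{W}^{\sim})^{\omega})$ rather than in $\mathcal{W}$, so it need not commute with $p$. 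The averaging difficulty you flag as the main obstacle does not arise in the paper's proof at all, because no witness is constructed by hand; everything is delegated to the factoriality criterion.
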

\begin{proof}
By \cite[Proposition 3.11]{Na5}, it suffices to show that 
$\mathcal{M}_{\omega}(\Phi (\mathcal{W}), M_2(\mathcal{W}))^{\tilde{\alpha}^{\omega}}$ is a factor. 
Since $\varrho (u_h)= 1_{\mathcal{M}^{\omega}(\mathcal{W})}$ for any $h\in  N(\tilde{\alpha})$, we have 
$\tilde{\alpha}_h^{\omega}=\tilde{\alpha}_h\otimes \mathrm{id}_{M_{2}(\mathbb{C})}$ for any 
$h\in N(\tilde{\alpha})$. Let $V$ be a map from $N(\tilde{\alpha})$ to 
$U(\pi_{\tau_{\mathcal{W}}}(\mathcal{W})^{''})$ such that $\tilde{\alpha}_h=\mathrm{Ad}(V_h)$ 
for any $h\in N(\tilde{\alpha})$. Since we have
$$
\left(\begin{array}{cc}
                         V_h   &     0    \\ 
                         0     &     V_h          
 \end{array} \right)\in \varrho (\Phi (\mathcal{W}))
$$
for any $h\in N(\tilde{\alpha})$, $\tilde{\alpha}_h^{\omega}
=\tilde{\alpha}_h\otimes \mathrm{id}_{M_{2}(\mathbb{C})}$ is a trivial automorphism of
$\mathcal{M}_{\omega}(\Phi (\mathcal{W}), M_2(\mathcal{W}))$ for any $h\in N(\tilde{\alpha})$. 
Hence we can define an action $\delta$ of $\Gamma/ N(\tilde{\alpha})$ on 
$\mathcal{M}_{\omega}(\Phi (\mathcal{W}), M_2(\mathcal{W}))$ such that 
$$
\mathcal{M}_{\omega}(\Phi (\mathcal{W}), M_2(\mathcal{W}))^{\tilde{\alpha}^{\omega}}
=\mathcal{M}_{\omega}(\Phi (\mathcal{W}), M_2(\mathcal{W}))^{\delta}.
$$ 
\cite[Theorem 3.2]{C3} and \cite[Lemma 5.6]{Oc} imply that 
the restriction $\delta$ on $\mathcal{M}_{\omega}(M_{2}(\mathcal{W}))$ 
satisfies the assumption of \cite[Theorem 6.1]{Oc} because 
$\tilde{\alpha}_g\otimes\mathrm{id}_{M_2(\mathbb{C})}$ is outer in 
$\pi_{M_{2}(\mathcal{W})}(M_2(\mathcal{W}))^{''}$ for any $g\in \Gamma\setminus  
N(\tilde{\alpha})$.  
Therefore we see that $\mathcal{M}_{\omega}(\Phi (\mathcal{W}), M_2(\mathcal{W}))^{\delta}$ is a factor 
by \cite[Proposition 3.14]{Na5} (see also \cite[Remark 3.15]{Na5}). 
Consequently, we obtain the conclusion. 
\end{proof}

Using the proposition above and Theorem \ref{thm:property-W} instead of \cite[Proposition 6.1]{Na5} 
and properties in the statement of \cite[Theorem 6.3]{Na5}, we obtain the  following uniqueness 
type theorem by the same argument as in the proof of \cite[Theorem 6.3]{Na5}. 

\begin{thm}
Let $\alpha$ be an outer action of a countable discrete amenable group 
$\Gamma$ on $\mathcal{W}$. Assume that $\alpha$ has property W. 
If $(\Psi, u)$ is a sequential asymptotic cocycle morphism from $(\mathcal{W}, \alpha)$ to 
$(\mathcal{W}, \alpha)$ such that 
$$
\Psi (x)=(x)_n \quad \text{and} \quad \varrho (u_h) =1_{\mathcal{M}^{\omega}(\mathcal{W})}
$$ 
for any $x\in \mathcal{W}$ and $h\in N(\tilde{\alpha})$, then $(\Psi, u)$ is inner. 
\end{thm}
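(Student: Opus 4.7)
The aim is to produce a unitary $w \in U((\mathcal{W}^{\sim})^{\omega})$ witnessing the definition of inner, namely satisfying $\Psi(x) = wxw^*$ and $\Psi(x)u_g = \Psi(x) w\alpha_g(w^*)$ for every $x \in \mathcal{W}$ and $g \in \Gamma$. I will adapt the argument of \cite[Theorem 6.3]{Na5}, whose skeleton is the standard $2 \times 2$ matrix trick carried out inside the equivariant central sequence algebra of the amplification. To set things up, work with the diagonal embedding $\Phi \colon \mathcal{W} \to M_2(\mathcal{W})^{\omega}$ and the action $\alpha^{\omega}$ on $F(\Phi(\mathcal{W}), M_2(\mathcal{W}))$ introduced above. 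The canonical matrix units $e_{11}, e_{22}$, viewed inside $F(\Phi(\mathcal{W}), M_2(\mathcal{W}))^{\alpha^{\omega}}$, are projections satisfying $\tau_{M_2(\mathcal{W}), \omega}(e_{11}) = \tau_{M_2(\mathcal{W}), \omega}(e_{22}) = 1/2$, and the entire problem reduces to connecting them by a partial isometry within this fixed point algebra.

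The heart of the proof is producing a partial isometry $s \in F(\Phi(\mathcal{W}), M_2(\mathcal{W}))^{\alpha^{\omega}}$ with $ss^* = e_{11}$ and $s^*s = e_{22}$. Two imported ingredients feed this step. First, the preceding proposition supplies strict comparison of positive elements in $F(\Phi(\mathcal{W}), M_2(\mathcal{W}))^{\alpha^{\omega}}$, via the factoriality of $\mathcal{M}_{\omega}(\Phi(\mathcal{W}), M_2(\mathcal{W}))^{\tilde{\alpha}^{\omega}}$ established there. Second, Theorem \ref{thm:property-W}, applied equivariantly in the $2\times 2$-amplified setting exactly as in \cite[Theorem 6.3]{Na5}, converts the equality of traces $\tau_{M_2(\mathcal{W}), \omega}(e_{11}) = \tau_{M_2(\mathcal{W}), \omega}(e_{22})$ into a Murray-von Neumann equivalence of the two projections inside the equivariant algebra.

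Finally, writing $s = e_{11} s e_{22}$ exhibits an element $w \in \mathcal{W}^{\omega}$ occupying the $(1,2)$-corner of $s$. Unpacking $ss^* = e_{11}$ and $s^*s = e_{22}$ inside $F(\Phi(\mathcal{W}), M_2(\mathcal{W}))$ gives $ww^* x = w^*w x = x$ for every $x \in \mathcal{W}$, so $w$ lifts to a unitary in $(\mathcal{W}^{\sim})^{\omega}$. The centralisation of $s$ by $\Phi(\mathcal{W})$ modulo the annihilator ideal yields $\Psi(x) = wxw^*$, and the $(1,2)$-corner of the fixedness relation $\alpha^{\omega}_g(s)\Phi(x) = s\Phi(x)$ rearranges to $\Psi(x)u_g = \Psi(x) w\alpha_g(w^*)$, which is exactly the inner condition. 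The main technical obstacle sits in the middle step: transporting property W from $F(\mathcal{W})^{\alpha}$ to the $2\times 2$-amplified equivariant central sequence algebra and combining it cleanly with the strict comparison from the preceding proposition to conclude the Murray-von Neumann equivalence of two equi-trace projections. Once that equivariant comparison is secured, the extraction of $w$ and the verification of the cocycle intertwining identities are purely algebraic.
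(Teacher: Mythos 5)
Your proposal follows the same route as the paper: the paper's proof is precisely a citation to the argument of \cite[Theorem 6.3]{Na5} (the $2\times 2$ matrix trick in $F(\Phi(\mathcal{W}), M_2(\mathcal{W}))^{\alpha^{\omega}}$, producing a partial isometry between the two diagonal corner projections of trace $1/2$ and reading off the intertwining unitary from its off-diagonal corner), with the preceding strict-comparison proposition and Theorem \ref{thm:property-W} substituted for the corresponding inputs of \cite{Na5}, exactly as you describe. Your sketch, including the identification of the equivariant comparison of the two equi-trace projections as the point where these two ingredients enter, is a faithful reconstruction of that argument.
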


The following corollary is an immediate consequence of the theorem above. 

\begin{cor}\label{cor:uniqueness}
Let $\alpha$ be an outer action of a countable discrete amenable group 
$\Gamma$ on $\mathcal{W}$. Assume that $\alpha$ has property W. 
For any finite subsets $F\subset \mathcal{W}$, $\Gamma_0\subset \Gamma$ and 
$\varepsilon>0$, there exist finite subsets 
$F^{\prime}\subset \mathcal{W}$, $\Gamma_0^{\prime}\subset \Gamma$, $N_0 \subset 
N(\tilde{\alpha})$ and $\delta>0$ such that the following holds. 
If $(\varphi, u)$ is a proper 
$(\Gamma_0^{\prime}, F^{\prime}, \delta)$-approximate cocycle morphism from 
$(\mathcal{W}, \alpha)$ to $(\mathcal{W}, \alpha)$ such that 
$$
\| \varphi (x) - x \| < \delta \quad \text{and} \quad  
\| \pi_{\tau_{\mathcal{W}}}(u_h) -1_{\pi_{\tau_{\mathcal{W}}}(\mathcal{W})^{''}} \|_2< \delta  
$$
for any $x\in F^{\prime}$ and $h\in N_0$, then there exists a unitary element $w$ in 
$\mathcal{W}^{\sim}$ such that 
$$
\| \varphi (x) -waw^* \| <\varepsilon \quad \text{and} \quad 
\|\varphi (x)(u_g-w\alpha_g (w^*)) \| <\varepsilon
$$
for any $x\in F$ and $g\in \Gamma_0$. 
\end{cor}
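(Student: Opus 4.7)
The plan is to derive the corollary from the preceding theorem by a standard reindexing/diagonal argument, arguing by contradiction. Suppose the statement fails. Then there exist finite subsets $F\subset\mathcal{W}$, $\Gamma_0\subset\Gamma$ and $\varepsilon>0$ such that, for every choice of $F'$, $\Gamma_0'$, $N_0$ and $\delta$, the desired unitary $w$ fails to exist. I would fix an increasing sequence $\{F_n'\}_{n\in\mathbb{N}}$ of finite subsets of $\mathcal{W}$ with dense union, an increasing sequence $\{\Gamma_n'\}_{n\in\mathbb{N}}$ of finite subsets of $\Gamma$ with $\bigcup_n \Gamma_n'=\Gamma$, an increasing sequence $\{N_n\}_{n\in\mathbb{N}}$ of finite subsets of $N(\tilde{\alpha})$ with $\bigcup_n N_n=N(\tilde{\alpha})$, and a sequence $\delta_n\searrow 0$, all containing $F$, $\Gamma_0$ from some stage on. By assumption, for each $n$ there is a proper $(\Gamma_n',F_n',\delta_n)$-approximate cocycle morphism $(\varphi_n,u_n)$ from $(\mathcal{W},\alpha)$ to $(\mathcal{W},\alpha)$ with $\|\varphi_n(x)-x\|<\delta_n$ on $F_n'$ and $\|\pi_{\tau_{\mathcal{W}}}(u_n(h))-1\|_2<\delta_n$ on $N_n$, for which \emph{no} unitary $w\in\mathcal{W}^{\sim}$ meets the conclusion with tolerance $\varepsilon$ on $F,\Gamma_0$.

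Next, I would assemble $\{(\varphi_n,u_n)\}_{n\in\mathbb{N}}$ into a sequential asymptotic cocycle morphism $(\Psi,u)$ from $(\mathcal{W},\alpha)$ to $(\mathcal{W},\alpha)$, as permitted by the setup recalled at the end of Subsection 2.4: the approximate multiplicativity, cocycle and equivariance estimates $\|\varphi_n(xy)-\varphi_n(x)\varphi_n(y)\|<\delta_n$, $\|\varphi_n(x)(u_n(gh)-u_n(g)\beta_g(u_n(h)))\|<\delta_n$ and $\|\varphi_n(\alpha_g(x))-u_n(g)\beta_g(\varphi_n(x))u_n(g)^*\|<\delta_n$ all vanish in the limit along $\omega$, so $\Psi(x):=(\varphi_n(x))_n$ defines a homomorphism $\mathcal{W}\to\mathcal{W}^{\omega}$ and $u_g:=(u_n(g))_n$ is a map $\Gamma\to U((\mathcal{W}^{\sim})^{\omega})$ satisfying the required identities. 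The condition $\|\varphi_n(x)-x\|<\delta_n$ for $x\in F_n'$ gives $\Psi(x)=(x)_n$ for every $x\in\mathcal{W}$ by density, and $\|\pi_{\tau_{\mathcal{W}}}(u_n(h))-1\|_2<\delta_n$ for $h\in N_n$ gives $\varrho(u_h)=1_{\mathcal{M}^{\omega}(\mathcal{W})}$ for every $h\in N(\tilde{\alpha})$.

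Thus $(\Psi,u)$ meets the hypotheses of the preceding theorem, so $(\Psi,u)$ is inner: there is $w=[(w_n)_n]\in U((\mathcal{W}^{\sim})^{\omega})$ with $\Psi(x)=wxw^*$ and $\Psi(x)u_g=\Psi(x)w\alpha_g(w^*)$ for all $x\in\mathcal{W}$, $g\in\Gamma$. Evaluating these two equalities on $x\in F$ and $g\in\Gamma_0$ (finite sets) gives, for every $n$ in some set belonging to $\omega$, a unitary $w_n\in\mathcal{W}^{\sim}$ such that $\|\varphi_n(x)-w_n x w_n^*\|<\varepsilon$ and $\|\varphi_n(x)(u_n(g)-w_n\alpha_g(w_n^*))\|<\varepsilon$ for $x\in F$, $g\in\Gamma_0$. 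This directly contradicts the assumption that $(\varphi_n,u_n)$ admits no such $w$, completing the argument.

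The main obstacle in executing this plan is bookkeeping rather than substance: one must choose the nested sequences $F_n',\Gamma_n',N_n,\delta_n$ so that each $(\varphi_n,u_n)$ really does assemble into a sequential asymptotic cocycle morphism and simultaneously witnesses the assumed failure for the fixed data $(F,\Gamma_0,\varepsilon)$. Once the quantifiers are arranged and the hypotheses of the preceding theorem are verified, the contradiction is immediate.
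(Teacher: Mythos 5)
Your proposal is correct and matches the paper's intent: the paper offers no written proof, stating only that the corollary is an immediate consequence of the preceding uniqueness theorem, and the standard way to make that precise is exactly your reindexing-by-contradiction argument, assembling the putative counterexamples $(\varphi_n,u_n)$ into a sequential asymptotic cocycle morphism with $\Psi(x)=(x)_n$ and $\varrho(u_h)=1$ and then invoking innerness to extract a witnessing unitary along the ultrafilter. The quantifier bookkeeping you describe (nested $F_n'$, $\Gamma_n'$, $N_n$, $\delta_n\searrow 0$, then intersecting finitely many $\omega$-large sets over $x\in F$, $g\in\Gamma_0$) is the right and only nontrivial content.
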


The following theorem is an existence type theorem. 

\begin{thm}
Let $\alpha$ and $\gamma$ be outer actions of a finite group $\Gamma$ on $\mathcal{W}$. 
Assume that $\gamma|_{C(g)}$ has property W for any $g\in \Gamma$. 
If the characteristic invariant of $\tilde{\gamma}$ is trivial, then there exist a unitary representation 
$U$ of $N(\tilde{\gamma})$ on 
$\pi_{\tau_{\mathcal{W}\otimes\mathcal{W}}}(\mathcal{W}\otimes\mathcal{W})^{''}$, 
a sequential asymptotic cocycle morphism $(\mathrm{id}_{\mathcal{W}\otimes\mathcal{W}}, v)$ from 
$(\mathcal{W}\otimes\mathcal{W}, \alpha\otimes \gamma)$ to $(\mathcal{W}\otimes \mathcal{W},
\alpha\otimes\mathrm{id}_{\mathcal{W}})$ and 
a sequential asymptotic cocycle morphism $(\mathrm{id}_{\mathcal{W}\otimes\mathcal{W}}, v^{\prime})$ 
from 
$(\mathcal{W}\otimes\mathcal{W}, \alpha\otimes \mathrm{id}_{\mathcal{W}})$ to 
$(\mathcal{W}\otimes \mathcal{W},\alpha\otimes\gamma)$ such that 
$\varrho (v_h)=U_h$ and $\varrho(v_h^{\prime})=U_h^*$ for any $h\in N(\tilde{\gamma})$. 
\end{thm}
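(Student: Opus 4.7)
The plan is to apply Theorem~\ref{thm:app-rep} to $\gamma$ and use the resulting approximate representability data to construct the required cocycles on $\mathcal{W}\otimes\mathcal{W}$. By the hypotheses on $\gamma$, Theorem~\ref{thm:app-rep} provides a unitary representation $V\colon N(\tilde{\gamma})\to U(\pi_{\tau_{\mathcal{W}}}(\mathcal{W})^{''})$ and maps $u,w\colon\Gamma\to\mathcal{W}^{\omega}$ with the listed intertwining, cocycle, equivariance and $\varrho$-properties. One defines the target unitary representation as $U_{h}:=1\otimes V_{h}$ regarded as acting on $\pi_{\tau_{\mathcal{W}\otimes\mathcal{W}}}(\mathcal{W}\otimes\mathcal{W})^{''}\cong\pi_{\tau_{\mathcal{W}}}(\mathcal{W})^{''}\bar{\otimes}\pi_{\tau_{\mathcal{W}}}(\mathcal{W})^{''}$; this is manifestly a unitary representation of $N(\tilde{\gamma})$.

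Next, each $u_{g}$ must be lifted to a unitary $\hat{u}_{g}\in U((\mathcal{W}^{\sim})^{\omega})$ preserving the approximate representability. The key facts are that $u_{g}^{*}u_{g}$ and $u_{g}u_{g}^{*}$ act as the identity on $\mathcal{W}$ from both sides (the right-multiplication version follows by taking adjoints of $u_{g}^{*}u_{g}x=x$), and that there exists a $\gamma$-invariant approximate unit of $\mathcal{W}^{\gamma}$. A diagonal construction yields representatives $\hat{u}_{g,n}\in\mathcal{W}^{\sim}$ that are genuinely unitary and satisfy $\hat{u}_{g}x=u_{g}x$ and $x\hat{u}_{g}=xu_{g}$ in $(\mathcal{W}^{\sim})^{\omega}$ for every $x\in\mathcal{W}$. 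By an inductive correction across the finite group $\Gamma$ using the $\mathcal{W}$-central auxiliary data $w_{g}$ (with $\varrho(w_{g})=1_{\mathcal{M}_{\omega}(\mathcal{W})}$) together with the approximate equivariance $\gamma_{g}(u_{h})x=w_{g}^{*}u_{ghg^{-1}}w_{g}x$, the lifts may be further arranged so that the cocycle identity holds upon both left and right multiplication by elements of $\mathcal{W}$.

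With the lifts in hand, set $v_{g}:=1_{M(\mathcal{W})}\otimes\hat{u}_{g}$ in $U(((\mathcal{W}\otimes\mathcal{W})^{\sim})^{\omega})$ and $v_{g}^{\prime}:=v_{g}^{*}$. On elementary tensors $x_{1}\otimes x_{2}$, the intertwining condition reduces to $\gamma_{g}(x_{2})=\hat{u}_{g}x_{2}\hat{u}_{g}^{*}$ and the cocycle condition reduces to $x_{2}\hat{u}_{gh}=x_{2}\hat{u}_{g}\hat{u}_{h}$ in $(\mathcal{W}^{\sim})^{\omega}$; both follow from the properties of the lifts established above and extend to general $x\in\mathcal{W}\otimes\mathcal{W}$ by linearity and continuity. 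The identities $\varrho(v_{h})=U_{h}$ and $\varrho(v_{h}^{\prime})=U_{h}^{*}$ for $h\in N(\tilde{\gamma})$ are immediate from $\varrho(\hat{u}_{h})=V_{h}$ on $N(\tilde{\gamma})$.

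The main obstacle is the lifting step. Theorem~\ref{thm:app-rep} supplies only the left-multiplication cocycle $u_{gh}y=u_{g}u_{h}y$, whereas the cocycle condition for the sequential asymptotic cocycle morphism requires the right-multiplication form $y\hat{u}_{gh}=y\hat{u}_{g}\hat{u}_{h}$ to hold in the ultraproduct for every $y\in\mathcal{W}$. Bridging these two sides requires a delicate choice of unitary lifts (via polar decomposition in $\mathcal{W}^{\sim}$) and a diagonal argument exploiting the compatibility of $\gamma$-invariant approximate units of $\mathcal{W}^{\gamma}$ with the whole finite family $\{u_{g},w_{g}\}_{g\in\Gamma}$; the central auxiliary element $w_{g}$ plays the crucial role of absorbing the discrepancy between the two sides during this inductive construction.
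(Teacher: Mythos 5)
Your construction of the forward morphism is essentially the paper's: define $U_h:=1\otimes V_h$, tensor the approximately representing unitaries from Theorem~\ref{thm:app-rep} into the second factor, and pass to genuine unitaries in $((\mathcal{W}\otimes\mathcal{W})^{\sim})^{\omega}$. One remark on what you call the main obstacle: the passage from the left-multiplication identity $u_{gh}x=u_gu_hx$ to the right-multiplication cocycle identity $xv_{gh}=xv_g(\alpha_g\otimes\mathrm{id}_{\mathcal{W}})(v_h)$ does not require any inductive re-choice of lifts. The paper gets it by a direct computation from the commutation relation $v_gx=(\mathrm{id}_{\mathcal{W}}\otimes\gamma_g)(x)v_g$, which converts left multiplication by $x$ into right multiplication; your proposed ``delicate diagonal correction'' is both unsubstantiated and unnecessary.

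The genuine gap is the backward morphism. Setting $v'_g:=v_g^*$ does not work. The cocycle identity for $(\mathrm{id}_{\mathcal{W}\otimes\mathcal{W}},v')$ has target action $\alpha\otimes\gamma$, so on elementary tensors it reads $b\,\hat{u}_{gh}^*=b\,\hat{u}_g^*\,\gamma_g(\hat{u}_h^*)$, not $b\,\hat{u}_{gh}^*=b\,\hat{u}_g^*\hat{u}_h^*$. This forces you to compare $\gamma_g(\hat{u}_h)$ with $\hat{u}_{ghg^{-1}}$, and Theorem~\ref{thm:app-rep} only gives $\gamma_g(u_h)x=w_g^*u_{ghg^{-1}}w_gx$: the two agree only up to conjugation by $w_g$, which is a genuinely nontrivial unitary of $F(\mathcal{W})^{\gamma}$ (a permutation of matrix units) that does not commute with the $u_k$; the fact that $\varrho(w_g)=1_{\mathcal{M}_{\omega}(\mathcal{W})}$ makes it invisible in the tracial completion but not in the C$^*$-ultrapower. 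Tracing through your formula, the identity you would need is $w_g^*u_{ghg^{-1}}w_g\,u_gx=u_{ghg^{-1}}u_gx$ for all $x\in\mathcal{W}$, which fails in general -- if it held, $u$ would already be an exactly $\Gamma$-equivariant family and the whole point of introducing $w$ in Theorem~\ref{thm:app-rep} would evaporate. The paper's fix is to build the correction into the backward cocycle from the start: $v'_g$ is obtained from $z'_g:=(h_n\otimes u_{g,n}^*w_{g,n})_n$ rather than from $z_g^*$, and then the relation $u_{g}^*w_g\gamma_g(u_h^*)w_h\,b=u_{gh}^*w_{gh}\,b$ (using $\gamma_g(w_h)=w_h$ and $w_{gh}x=w_gw_hx$) makes the $(\alpha\otimes\gamma)$-cocycle identity come out exactly. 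Since $\varrho(w_h)=1$, this modification still yields $\varrho(v'_h)=U_h^*$. Without this correction your second morphism is not a sequential asymptotic cocycle morphism, and the existence theorem -- hence the intertwining argument in Theorem~\ref{thm;absorption} -- does not go through.
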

\begin{proof}
Let $\{h_n\}_{n\in\mathbb{N}}$ be an approximate unit for $\mathcal{W}$. 
Take a unitary representation $V$ of $N(\tilde{\gamma})$ on 
$\pi_{\tau_{\mathcal{W}}}(\mathcal{W})^{''}$ and maps $u$ and $w$ as  in Theorem \ref{thm:app-rep}. 
Define a unitary representation $U$ of $N(\tilde{\gamma})$ on 
$\pi_{\tau_{\mathcal{W}\otimes\mathcal{W}}}(\mathcal{W}\otimes\mathcal{W})^{''}$ by 
$U_h:= 1_{\pi_{\tau_{\mathcal{W}}}(\mathcal{W})^{''}}\otimes V_h$ for any $h\in N(\tilde{\gamma})$. 
For any $g\in \Gamma$, put $z_g:= (h_n\otimes u_{g,n})_n$ and 
$z_g^{\prime}:=(h_n\otimes u_{g,n}^*w_{g,n})_n$ in 
$(\mathcal{W}\otimes\mathcal{W})^{\omega}$ where $(u_{g,n})_n$ and $(w_{g,n})_n$ are representatives 
of $u_g$ and $w_g$, respectively. It is easy to see that we have 
$$
z_{g}^*z_{g}x=x, \quad z_{g}^{\prime *}z_{g}x=x, \eqno{(5.4.1)}
$$
and 
$$
z_gxz_g^*= (\mathrm{id}_{\mathcal{W}}\otimes 
\gamma_g)(x), \quad z_g^{\prime}x z_g^{\prime *}=(\mathrm{id}_{\mathcal{W}}\otimes \gamma_{g^{-1}})(x)
\eqno{(5.4.2)}
$$
for any $x\in\mathcal{W}\otimes \mathcal{W}$ and $g\in \Gamma$. 
Since we have 
$$
z_{g}(\alpha_g\otimes \mathrm{id}_{\mathcal{W}})
(z_{h})(a\otimes b)=(h_n\alpha_g(h_n)a\otimes u_{g,n}u_{h, n}b)_n= (a\otimes u_{gh, n}b)_n= z_{gh}(a\otimes b) 
$$
and 
\begin{align*}
z^{\prime}_{g}(\alpha_g\otimes \gamma_g)(z^{\prime}_h)(a\otimes b)
&=(h_n\alpha_g(h_n) a\otimes u_{g,n}^*w_{g,n}\gamma_g(u_{h,n}^*)w_{h,n}b)_n \\
&
=(a\otimes u_{g,n}^*w_{g,n}w_{g,n}^*u_{ghg^{-1},n}^*w_{g,n}w_{h,n}b)_n \\
&= (a\otimes u_{gh, n}^*w_{gh}b)= z_{gh}^{\prime}(a\otimes b)
\end{align*}
for any $a, b\in\mathcal{W}$ and $g,h\in \Gamma$, 
$$
z_{gh}x=z_{g}(\alpha_g\otimes \mathrm{id}_{\mathcal{W}})(z_{h})x\quad \text{and} \quad z^{\prime}_{gh}x
= z^{\prime}_g(\alpha_g\otimes\gamma_g)(z^{\prime}_h)x
\eqno{(5.4.3)}
$$ 
for any $x\in \mathcal{W}$ and $g,h\in \Gamma$. 
Since $\mathcal{W}$ has stable rank one, for any $g\in \Gamma$, there exist unitary elements 
$v_g$ and $v^{\prime}_g$ in $(\mathcal{W}^{\sim})^{\omega}$ such that 
$$
v_gx=z_gx \quad \text{and} \quad v^{\prime}_gx= z_g^{\prime}x
$$ 
for any $x\in\mathcal{W}\otimes \mathcal{W}$ by 
(5.4.1) and a routine argument based on the polar decomposition (see, for example, 
an argument in the proof of \cite[Lemma 7.1]{Na5}). 
It is easy to see that we have 
$$
\varrho (v_h)=U_h \quad \text{and} \quad \varrho(v_h^{\prime})=U_h^*
$$ 
for any $h\in N(\tilde{\gamma})$. By (5.4.2), we have 
$$
v_gxv_g^*=z_gxz_g^*=(\mathrm{id}_{\mathcal{W}}\otimes \gamma_{g})(x) \quad \text{and}
\quad  
v^{\prime}_gxv_g^{\prime *}=z_g^{\prime}xz_g^{\prime *}=(\mathrm{id}_{\mathcal{W}}\otimes 
\gamma_{g^{-1}})(x)
$$ 
for any $x\in \mathcal{W}\otimes \mathcal{W}$ and $g\in \Gamma$. 
These formulas imply 
$$
\mathrm{id}_{\mathcal{W}\otimes \mathcal{W}}\circ (\alpha_g\otimes \gamma_g)= 
(\mathrm{id}_{\mathcal{W}}\otimes \gamma_g)\circ (\alpha_g \otimes \mathrm{id}_{\mathcal{W}}) 
=\mathrm{Ad}(v_g)\circ (\alpha_g \otimes \mathrm{id}_{\mathcal{W}})
$$
and 
$$
\mathrm{id}_{\mathcal{W}\otimes \mathcal{W}}\circ (\alpha_g \otimes \mathrm{id}_{\mathcal{W}})
= (\mathrm{id}_{\mathcal{W}}\otimes \gamma_{g^{-1}})\circ (\alpha_g \otimes \gamma_g)
= \mathrm{Ad}(v^{\prime}_g)\circ (\alpha_g \otimes \gamma_g)
$$
for any $g\in \Gamma$. 
Note that we have $v_gx=(\mathrm{id}_{\mathcal{W}}\otimes \gamma_{g})(x)v_{g}$ and 
$v_g^{\prime}x=(\mathrm{id}_{\mathcal{W}}\otimes \gamma_{g^{-1}})(x)v_{g}^{\prime}$ for any 
$x\in \mathcal{W}\otimes \mathcal{W}$ and $g\in \Gamma$. Therefore (5.4.3) implies 
\begin{align*}
xv_{gh}
&= v_{gh}(\mathrm{id}_{\mathcal{W}}\otimes \gamma_{h^{-1}g^{-1}})(x)
=z_{gh}(\mathrm{id}_{\mathcal{W}}\otimes \gamma_{h^{-1}g^{-1}})(x) \\
&=z_{g}(\alpha_g\otimes \mathrm{id}_{\mathcal{W}})(z_{h})
(\mathrm{id}_{\mathcal{W}}\otimes \gamma_{h^{-1}g^{-1}})(x) \\ 
&= z_{g}(\alpha_g\otimes \mathrm{id}_{\mathcal{W}})(v_{h}(\alpha_{g^{-1}}\otimes 
\gamma_{h^{-1}g^{-1}})(x))  \\
&=z_{g}(\mathrm{id}_{\mathcal{W}}\otimes \gamma_{g^{-1}})(x)(\alpha_g\otimes \mathrm{id}_{\mathcal{W}})(v_{h}) \\
&=v_{g}(\mathrm{id}_{\mathcal{W}}\otimes \gamma_{g^{-1}})(x)(\alpha_g\otimes \mathrm{id}_{\mathcal{W}})(v_{h}) \\
&=xv_{g}(\alpha_g\otimes \mathrm{id}_{\mathcal{W}})(v_{h})
\end{align*}
and
\begin{align*}
xv_{gh}^{\prime}
&= v_{gh}^{\prime}(\mathrm{id}_{\mathcal{W}}\otimes \gamma_{gh})(x)
=z_{gh}^{\prime}(\mathrm{id}_{\mathcal{W}}\otimes \gamma_{gh})(x) \\
&=z_{g}^{\prime}(\alpha_g\otimes \gamma_g)(z_{h}^{\prime})
(\mathrm{id}_{\mathcal{W}}\otimes \gamma_{gh})(x) 
= z_{g}^{\prime}(\alpha_g\otimes \gamma_g)(v_{h}^{\prime}(\alpha_{g^{-1}}\otimes 
\gamma_{h})(x))  \\
&=z_{g}^{\prime}(\mathrm{id}_{\mathcal{W}}\otimes \gamma_{g})(x)
(\alpha_g\otimes \gamma_g)(v_{h}^{\prime}) 
=v_{g}^{\prime}(\mathrm{id}_{\mathcal{W}}\otimes \gamma_{g})(x)
(\alpha_g\otimes \gamma_g)(v_{h}^{\prime}) \\
&=xv_{g}^{\prime}(\alpha_g\otimes \gamma_g)(v_{h}^{\prime})
\end{align*}
for any $x\in \mathcal{W}\otimes \mathcal{W}$ and $g,h\in \Gamma$. 
Consequently, 
$(\mathrm{id}_{\mathcal{W}\otimes\mathcal{W}}, v)$ and 
$(\mathrm{id}_{\mathcal{W}\otimes\mathcal{W}}, v^{\prime})$ are the desired 
sequential asymptotic cocycle morphisms. 
\end{proof}

The following corollary is an immediate consequence of the theorem above. 
\begin{cor}\label{cor:existence}
Let $\alpha$ and $\gamma$ be outer actions of a finite group $\Gamma$ on $\mathcal{W}$. 
Assume that $\gamma|_{C(g)}$ has property W for any $g\in \Gamma$. 
If the characteristic invariant of $\tilde{\gamma}$ is trivial, 
then there exists a unitary representation 
$U$ of $N(\tilde{\gamma})$ on 
$\pi_{\tau_{\mathcal{W}\otimes\mathcal{W}}}(\mathcal{W}\otimes\mathcal{W})^{''}$ 
such that the following holds: \ \\
(i) For any finite set $F\subset \mathcal{W}\otimes \mathcal{W}$ and 
$\varepsilon>0$, there exists a map $u$ from 
$\Gamma$ to $U((\mathcal{W}\otimes\mathcal{W})^{\sim})$ such that 
$$
\| (\alpha_g\otimes \gamma_g )(x) - \mathrm{Ad}(u_g)\circ (\alpha_g\otimes \mathrm{id}_{\mathcal{W}})(x) 
\|< \varepsilon, \quad 
\| x(u_{gh}- u_g(\alpha_g\otimes \mathrm{id}_{\mathcal{W}})(u_h))\| < \varepsilon
$$
and 
$$
\| \pi_{\tau_{\mathcal{W}\otimes\mathcal{W}}}(u_k) - U_k\|_2 < \varepsilon
$$
for any $x\in F$, $g,h\in \Gamma$ and $k\in N(\tilde{\gamma})$. 
\ \\
(ii) For any finite set $F\subset \mathcal{W}\otimes\mathcal{W}$ and $\varepsilon>0$, 
there exists a map $v$ from 
$\Gamma$ to $U( (\mathcal{W}\otimes\mathcal{W})^{\sim})$ such that 
$$
\| (\alpha_g\otimes \mathrm{id}_{\mathcal{W}}) (x) - \mathrm{Ad}(v_g)\circ 
(\alpha_g\otimes \gamma_g)(x) 
\|< \varepsilon, \quad 
\| x(v_{gh}- v_g(\alpha_g\otimes\gamma_g)(v_h))\| < \varepsilon 
$$
and 
$$
\| \pi_{\tau_{\mathcal{W}\otimes\mathcal{W}}}(v_k) - U_k^*\|_2 < \varepsilon
$$
for any $x\in F$, $g,h\in \Gamma$ and $k\in N(\tilde{\gamma})$. 
\end{cor}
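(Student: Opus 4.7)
The plan is to extract the desired approximate finite-level data directly from the two sequential asymptotic cocycle morphisms produced by Theorem~5.4 via a standard reindexing argument. Let $U$ be the unitary representation of $N(\tilde{\gamma})$ on $\pi_{\tau_{\mathcal{W}\otimes\mathcal{W}}}(\mathcal{W}\otimes\mathcal{W})^{''}$ provided by that theorem, together with the sequential asymptotic cocycle morphism $(\mathrm{id}_{\mathcal{W}\otimes\mathcal{W}}, v)$ from $(\mathcal{W}\otimes\mathcal{W},\alpha\otimes\gamma)$ to $(\mathcal{W}\otimes\mathcal{W},\alpha\otimes\mathrm{id}_{\mathcal{W}})$ satisfying $\varrho(v_h)=U_h$ for $h\in N(\tilde{\gamma})$; the corollary's claim (i) is obtained from this datum, and claim (ii) will follow by the same argument applied to the second cocycle morphism $(\mathrm{id}_{\mathcal{W}\otimes\mathcal{W}}, v^{\prime})$.

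For (i) I would choose, for each $g\in\Gamma$, representative unitaries $u_{g,n}\in U((\mathcal{W}\otimes\mathcal{W})^{\sim})$ with $v_g=[(u_{g,n})_n]$. By the definition of a sequential asymptotic cocycle morphism, the identities
\[
(\alpha_g\otimes \gamma_g)(x) = \mathrm{Ad}(v_g)\circ(\alpha_g\otimes\mathrm{id}_{\mathcal{W}})(x),
\qquad
x\cdot v_{gh} = x\cdot v_g\cdot(\alpha_g\otimes\mathrm{id}_{\mathcal{W}})(v_h)
\]
hold in $(\mathcal{W}\otimes\mathcal{W})^{\omega}$ for every $x\in\mathcal{W}\otimes\mathcal{W}$ and $g,h\in\Gamma$, and the relation $\varrho(v_k)=U_k$ for $k\in N(\tilde{\gamma})$ translates, via the definition of $\mathcal{M}^{\omega}(\mathcal{W}\otimes\mathcal{W})$, into $\lim_{n\to\omega}\|\pi_{\tau_{\mathcal{W}\otimes\mathcal{W}}}(u_{k,n})-U_k\|_2=0$. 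Each of these ultraproduct-level identities is the vanishing, along $\omega$, of the norm of a lifted sequence.

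Now, given the finite set $F\subset\mathcal{W}\otimes\mathcal{W}$ and $\varepsilon>0$, only finitely many pairs $(g,h)\in\Gamma\times\Gamma$, elements $x\in F$ and $k\in N(\tilde{\gamma})$ are involved; thus the corresponding (finitely many) estimates
\[
\|(\alpha_g\otimes\gamma_g)(x)-\mathrm{Ad}(u_{g,n})\circ(\alpha_g\otimes\mathrm{id}_{\mathcal{W}})(x)\|<\varepsilon,
\]
\[
\|x(u_{gh,n}-u_{g,n}(\alpha_g\otimes\mathrm{id}_{\mathcal{W}})(u_{h,n}))\|<\varepsilon,
\qquad
\|\pi_{\tau_{\mathcal{W}\otimes\mathcal{W}}}(u_{k,n})-U_k\|_2<\varepsilon
\]
all define subsets of $\mathbb{N}$ belonging to $\omega$. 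Intersecting them yields an $\omega$-large set, in particular a nonempty one; picking any such index $n$ and setting $u_g:=u_{g,n}$ produces the map required in (i). Statement (ii) is obtained by the same procedure applied to representatives of $v^{\prime}_g$, using that $\varrho(v^{\prime}_k)=U_k^{\ast}$.

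There is no significant obstacle here: all conceptual work is packaged inside Theorem~5.4, and the corollary is an entirely routine passage from the ultrafilter formulation to finite-level approximate data. The only minor point to be careful about is ensuring that the representatives may be chosen to be unitaries in the unitization (rather than in some ultraproduct algebra), but this is standard since $U((\mathcal{W}\otimes\mathcal{W})^{\sim})$ surjects onto $U(((\mathcal{W}\otimes\mathcal{W})^{\sim})^{\omega})$ componentwise.
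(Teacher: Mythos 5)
Your proposal is correct and is precisely the argument the paper has in mind: the paper states the corollary is "an immediate consequence" of the preceding existence theorem, and the intended content is exactly your standard extraction of finite-level data from the sequential asymptotic cocycle morphisms $(\mathrm{id},v)$ and $(\mathrm{id},v')$ by lifting to unitary representatives and intersecting the finitely many $\omega$-large index sets. The only point worth phrasing more carefully is the lifting of unitaries from $((\mathcal{W}\otimes\mathcal{W})^{\sim})^{\omega}$ to sequences of unitaries, which holds by a polar-decomposition argument on an $\omega$-large set rather than by a literal componentwise surjection, but this does not affect the validity of the proof.
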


The following theorem is the main result in this section. 
\begin{thm}\label{thm;absorption}
Let $\alpha$ and $\gamma$ be outer actions of a finite group $\Gamma$ on $\mathcal{W}$. 
Assume that $\alpha\otimes \gamma$ and 
$\gamma|_{C(g)}$ have property W for any $g\in \Gamma$. 
If the characteristic invariant of 
$\tilde{\gamma}$ is trivial and $N(\tilde{\alpha})\subseteq N(\tilde{\gamma})$, 
then $\alpha\otimes \gamma$ is cocycle conjugate to $\alpha\otimes \mathrm{id}_{\mathcal{W}}$.
\end{thm}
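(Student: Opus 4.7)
The plan is to apply Szab\'o's approximate cocycle intertwining argument from \cite{Sza7} (compare \cite[Section 5]{Na5}) to the pair of actions $\alpha\otimes\gamma$ and $\alpha\otimes\mathrm{id}_{\mathcal{W}}$ on $\mathcal{W}\otimes\mathcal{W}$, with Corollary \ref{cor:existence} supplying the existence step and Corollary \ref{cor:uniqueness} supplying the uniqueness step.

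Before running the intertwining, I would check two things. First, Corollary \ref{cor:uniqueness} requires property W, which holds for $\alpha\otimes\gamma$ by hypothesis; for $\alpha\otimes\mathrm{id}_{\mathcal{W}}$ it follows from Theorem \ref{thm:property-W}(1) since this action is tautologically $\mathcal{W}$-absorbing and remains outer on $\mathcal{W}\otimes\mathcal{W}$. Second, to make the existence and uniqueness ingredients fit together, one needs the containment $N(\widetilde{\alpha\otimes\gamma})\subseteq N(\tilde{\gamma})$, so that the $\|\cdot\|_2$-control over the cocycle that Corollary \ref{cor:existence} provides on $N(\tilde{\gamma})$ covers every element appearing in the uniqueness hypothesis. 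This containment should be extracted from a normalizer argument inside the injective $\mathrm{II}_1$ factor $R\bar{\otimes}R$, in which the hypothesis $N(\tilde{\alpha})\subseteq N(\tilde{\gamma})$ plays an essential role.

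Now fix increasing exhausting finite subsets $F_n\subset\mathcal{W}\otimes\mathcal{W}$ and tolerances $\varepsilon_n\searrow 0$. For each $n$, apply Corollary \ref{cor:existence} to produce maps $u^{(n)},v^{(n)}:\Gamma\to U((\mathcal{W}\otimes\mathcal{W})^{\sim})$ such that $(\mathrm{id},u^{(n)})$ is a proper approximate cocycle morphism $(\alpha\otimes\gamma)\to(\alpha\otimes\mathrm{id}_{\mathcal{W}})$, $(\mathrm{id},v^{(n)})$ goes the other way, and $\pi_{\tau}(u^{(n)}_k)$, $\pi_{\tau}(v^{(n)}_k)$ are $\|\cdot\|_2$-close to $U_k$ and $U_k^*$ respectively for $k\in N(\tilde{\gamma})$. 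Their composition is an approximate cocycle morphism of $(\alpha\otimes\gamma)$ to itself whose cocycle satisfies $\pi_\tau(u^{(n)}_k v^{(n)}_k)\approx U_k U_k^*=1$ in $\|\cdot\|_2$ for $k\in N(\widetilde{\alpha\otimes\gamma})\subseteq N(\tilde{\gamma})$; this is exactly the near-identity hypothesis of Corollary \ref{cor:uniqueness}. One therefore obtains unitaries $w_n\in(\mathcal{W}\otimes\mathcal{W})^{\sim}$ approximately implementing this composite as an inner automorphism with coboundary cocycle, and the symmetric procedure on the other side yields unitaries $w'_n$.

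Feeding these data into Szab\'o's two-sided approximate cocycle intertwining scheme, one builds inductively an approximately commuting diagram whose limit produces an isomorphism $\theta:\mathcal{W}\otimes\mathcal{W}\to\mathcal{W}\otimes\mathcal{W}$ together with an $(\alpha\otimes\mathrm{id}_{\mathcal{W}})$-cocycle $\bar{u}$ satisfying $\theta\circ(\alpha\otimes\gamma)_g=\mathrm{Ad}(\bar{u}_g)\circ(\alpha\otimes\mathrm{id}_{\mathcal{W}})_g\circ\theta$ for every $g\in\Gamma$, i.e. the required cocycle conjugacy. The main obstacle is the second preliminary observation: the identification of $N(\widetilde{\alpha\otimes\gamma})$ and the verification that the composed cocycle is genuinely $\|\cdot\|_2$-close to $1$ on all of it; without this step one would lose control of the cocycle on part of the inner normal subgroup of $\widetilde{\alpha\otimes\gamma}$ and the uniqueness corollary would not apply. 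This is precisely the point at which the hypothesis $N(\tilde{\alpha})\subseteq N(\tilde{\gamma})$ is indispensable.
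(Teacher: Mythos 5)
Your proposal is correct and follows essentially the same route as the paper's proof: the preliminary observations (property W for $\alpha\otimes \mathrm{id}_{\mathcal{W}}$ via Theorem \ref{thm:property-W}, and $N(\widetilde{\alpha\otimes\gamma})=N(\tilde{\alpha})\subseteq N(\tilde{\gamma})$ so that the $\|\cdot\|_2$-control from Corollary \ref{cor:existence} feeds the hypothesis of Corollary \ref{cor:uniqueness}) are exactly those made at the start of the paper's argument, and the rest is the same two-sided Szab\'o approximate cocycle intertwining, which the paper merely carries out with explicit bookkeeping of the finite sets and tolerances in its Steps 1--4.
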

\begin{proof}
Note that $\alpha\otimes\mathrm{id}_{\mathcal{W}}$ also has property W by 
Theorem \ref{thm:property-W} and we have $N(\widetilde{\alpha\otimes\gamma}) 
=N(\widetilde{\alpha\otimes\mathrm{id}_{\mathcal{W}}})=N(\tilde{\alpha})\subseteq N(\tilde{\gamma})$. 
By Corollary \ref{cor:uniqueness} (uniqueness), Corollary \ref{cor:existence} (existence) 
and Szab\'o's approximate cocycle
intertwining argument \cite{Sza7} (see also \cite[Proposition 5.2 and the proof of Theorem 8.1]{Na5}), 
we obtain the conclusion. 
Indeed, let $\{x_n\}_{n\in\mathbb{N}}$ be a dense subset of $\mathcal{W}\otimes\mathcal{W}$.  
Take a unitary representation $U$ of $N(\tilde{\gamma})$ on 
$\pi_{\tau_{\mathcal{W}\otimes\mathcal{W}}}(\mathcal{W}\otimes\mathcal{W})^{''}$ as in 
Corollary \ref{cor:existence}. For any $n\in\mathbb{N}$, put $\varepsilon_n:=1/2^n$. 

Step 1. Applying Corollary \ref{cor:uniqueness} to $\alpha\otimes \gamma$,  
$F_1:=\{x_1, x_1^*\}$, $\Gamma$ and $\varepsilon_1/2$, 
we obtain $F^{\prime}_1$ and $\delta_1>0$. (Note that we need not consider  
$\Gamma^{\prime}$ and $N_0$ because $\Gamma$ is a finite group.) We may assume that 
$\delta_1<\varepsilon_1$. Put $F_{1}^{\prime\prime}:= F_1\cup F_1^{\prime}$. 
By Corollary \ref{cor:existence} (i), there exists  a map $u^{(1)}$ from 
$\Gamma$ to $U((\mathcal{W}\otimes\mathcal{W})^{\sim})$ such that 
$$
\| (\alpha_g\otimes \gamma_g )(x) - \mathrm{Ad}(u^{(1)}_g)\circ 
(\alpha_g\otimes \mathrm{id}_{\mathcal{W}})(x) 
\|< \frac{\delta_1}{2},  \eqno{(5.6.1)}
$$
$$
\| x(u_{gh}^{(1)}- u_g^{(1)}(\alpha_g\otimes \mathrm{id}_{\mathcal{W}})(u_h^{(1)}))\| < 
\frac{\delta_1}{2}
\eqno{(5.6.2)}
$$
and 
$$
\| \pi_{\tau_{\mathcal{W}\otimes\mathcal{W}}}(u_k^{(1)}) - U_k\|_2 < \frac{\delta_1}{2} \eqno{(5.6.3)}
$$
for any $x\in F_1^{\prime\prime}$, $g,h\in \Gamma$ and $k\in N(\tilde{\gamma})$. 
Put $(\varphi_1, \tilde{u}^{(1)}):= (\mathrm{id}_{\mathcal{W}\otimes\mathcal{W}}, u^{(1)})$. Note that 
$(\varphi_1, \tilde{u}^{(1)})$ is a proper 
$(\Gamma, F_1, \varepsilon_1)$-approximate cocycle morphism from 
$(\mathcal{W}\otimes\mathcal{W}, \alpha\otimes\gamma)$ to 
$(\mathcal{W}\otimes\mathcal{W}, \alpha\otimes\mathrm{id}_{\mathcal{W}})$. 

Step 2. Choose a finite self-adjoint subset $G_1$ of $\mathcal{W}\otimes\mathcal{W}$ such that 
$$
F_1=\varphi_{1}(F_1) \subset G_1 \quad  \text{and} \quad \{\tilde{u}^{(1)}_g\; |\; g\in\Gamma\}
\subset \{x+ \lambda 1_{(\mathcal{W}\otimes\mathcal{W})^{\sim}}\; |\; x\in G_1, \lambda\in\mathbb{C}\}. 
$$
Applying Corollary \ref{cor:uniqueness} to $\alpha\otimes \mathrm{id}_{\mathcal{W}}$,  
$G_1$, $\Gamma$ and $\varepsilon_1/2$, we obtain $G_1^{\prime}$ and $\delta_2>0$. 
We may assume 
$$
\delta_2 < \dfrac{\delta_1}{1+\displaystyle{\max_{x\in F_1^{\prime}}\| x\|} }. \eqno{(5.6.4)}
$$
Put $G_1^{\prime\prime}:= G_1\cup G_1^{\prime}\cup \bigcup_{g\in \Gamma} 
F_1^{\prime}u_{g}^{(1)}$. 
Note that we have 
$$
x\in G_1^{\prime\prime}, \quad xu_g^{(1)}\in G_1^{\prime\prime}, \quad 
u_g^{(1)}=\tilde{u}^{(1)}_g\in \{x+ \lambda 1_{(\mathcal{W}\otimes\mathcal{W})^{\sim}}\; |\; 
x\in G_1^{\prime\prime}, \lambda\in\mathbb{C}\} \eqno{(5.6.5)}
$$
for any $x\in F_1^{\prime}$ and $g\in \Gamma$. 
By Corollary \ref{cor:existence} (ii), there exists  a map $v^{(1)}$ from 
$\Gamma$ to $U((\mathcal{W}\otimes\mathcal{W})^{\sim})$ such that 
$$
\| (\alpha_g\otimes \mathrm{id}_{\mathcal{W}} )(x) - \mathrm{Ad}(v^{(1)}_g)\circ 
(\alpha_g\otimes \gamma_g)(x) 
\|< \frac{\delta_2}{2},  \eqno{(5.6.6)}
$$
$$
\| x(v_{gh}^{(1)}- v_g^{(1)}(\alpha_g\otimes \gamma_g)(v_h^{(1)}))\| < 
\frac{\delta_2}{2}
\eqno{(5.6.7)}
$$
and 
$$
\| \pi_{\tau_{\mathcal{W}\otimes\mathcal{W}}}(v_k^{(1)}) - U_k^*\|_2 < \frac{\delta_2}{2} \eqno{(5.6.8)}
$$
for any $x\in G_1^{\prime\prime}$, $g,h\in \Gamma$ and $k\in N(\tilde{\gamma})$. 
For any $g\in \Gamma$, 
put $z_g^{(1)}:= u_g^{(1)}v_g^{(1)}\in U((\mathcal{W}\otimes\mathcal{W})^{\sim})$. 
Then we have 
\begin{align*}
 \| x(z_{gh}^{(1)}-z_g^{(1)}(\alpha_g&\otimes \gamma_g)(z_h^{(1)}))\| \\
& = \| xu_{gh}^{(1)}v_{gh}^{(1)}-xu_{g}^{(1)}v_{g}^{(1)}(\alpha_g\otimes \gamma_g)(u_{h}^{(1)}v_{h}^{(1)})\| \\
(5.6.5), (5.6.7)\; & <
\| x u_{gh}^{(1)} v_g^{(1)}(\alpha_g\otimes \gamma_g)(v_h^{(1)})-
xu_{g}^{(1)}v_{g}^{(1)}(\alpha_g\otimes \gamma_g)(u_{h}^{(1)}v_{h}^{(1)})\|+ \frac{\delta_2}{2} \\
(5.6.2)\; & < \| xu_g^{(1)}(\alpha_g\otimes \mathrm{id}_{\mathcal{W}})(u_h^{(1)})
 v_g^{(1)}(\alpha_g\otimes \gamma_g)(v_h^{(1)}) \\
& \quad -xu_{g}^{(1)}v_{g}^{(1)}(\alpha_g\otimes \gamma_g)(u_{h}^{(1)}v_{h}^{(1)})\|+\frac{\delta_1}{2}+
\frac{\delta_2}{2} \\
(5.6.5), (5.6.6)\; 
& < \| xu_g^{(1)}v_g^{(1)}(\alpha_g\otimes \gamma_g)(u_h^{(1)})v_g^{(1)*}
v_g^{(1)}(\alpha_g\otimes \gamma_g)(v_h^{(1)}) \\
& \quad -xu_{g}^{(1)}v_{g}^{(1)}(\alpha_g\otimes \gamma_g)(u_{h}^{(1)}v_{h}^{(1)})\|+\frac{\delta_1}{2}
+ \frac{\delta_2}{2}+ \frac{\delta_2\| x\|}{2}  \\
(5.6.4)\; &< \delta_1
\end{align*}
and 
\begin{align*}
 \| (\alpha_g\otimes \gamma_g)(x)- &\mathrm{Ad}(z_g^{(1)})\circ (\alpha_g\otimes \gamma_g)(x)\| \\
& = \|  (\alpha_g\otimes \gamma_g)(x)- 
u_g^{(1)}v_g^{(1)}(\alpha_g\otimes \gamma_g)(x)v_g^{(1)*}u_{g}^{(1)*}\| \\
(5.6.5),(5.6.6)\; 
& < \| (\alpha_g\otimes \gamma_g)(x)- u_g^{(1)}(\alpha_g\otimes\mathrm{id}_{\mathcal{W}})(x) 
u_{g}^{(1)*}\|+ \frac{\delta_2}{2} \\
(5.6.1)\; 
& <\frac{\delta_1}{2}+ \frac{\delta_2}{2}< \delta_1
\end{align*}
for any $x\in F_1^{\prime}$ and $g\in \Gamma$. 
Hence $(\mathrm{id}_{\mathcal{W}\otimes \mathcal{W}}, z^{(1)})$ is a proper 
$(\Gamma, F_1^{\prime}, \delta_1)$-approximate cocycle morphism from 
$(\mathcal{W}\otimes\mathcal{W}, \alpha\otimes\gamma)$ to 
$(\mathcal{W}\otimes\mathcal{W}, \alpha\otimes\gamma)$. 
Since we have  
$$
\| \pi_{\tau_{\mathcal{W}\otimes \mathcal{W}}}(z_k^{(1)})-1_{\pi_{\tau_{\mathcal{W}\otimes\mathcal{W}}}
(\mathcal{W}\otimes \mathcal{W})^{''}} \|_2 < \delta_1
$$
for any $k\in N(\tilde{\gamma})$ by (5.6.3), (5.6.4) and (5.6.8), 
Corollary \ref{cor:uniqueness} implies that 
there exists a unitary element $w_1$ in 
$(\mathcal{W}\otimes\mathcal{W})^{\sim}$ such that 
$$
\| x -w_1xw_1^* \| < \frac{\varepsilon_1}{2} \quad \text{and} \quad 
\|x(z_g^{(1)}-w_1(\alpha_g\otimes\gamma_g) (w_1^*)) \| < \frac{\varepsilon_1}{2}
$$
for any $x\in F_1$ and $g\in \Gamma$. Put 
$$
\psi_1:= \mathrm{Ad}(w_1^*)\quad \text{and} 
\quad
\tilde{v}^{(1)}_g:= w_1^*v_g^{(1)}(\alpha_g\otimes\gamma_g)(w_1)
$$
for any $g\in \Gamma$. Then $(\psi_1, \tilde{v}^{(1)})$ is a proper 
$(\Gamma, G_1, \varepsilon_1)$-approximate cocycle morphism from 
$(\mathcal{W}\otimes\mathcal{W}, \alpha\otimes\mathrm{id}_{\mathcal{W}})$ to 
$(\mathcal{W}\otimes\mathcal{W}, \alpha\otimes\gamma)$ such that 
\begin{align*}
\| \psi_1 \circ \varphi_1 (x) - x \|= \| w_1^*xw_1 -x\| < \frac{\varepsilon_1}{2}< \varepsilon_1
\end{align*}
and 
\begin{align*}
\| \psi_1\circ \varphi_1(x)(\psi_1(\tilde{u}_g^{(1)})\tilde{v}_g^{(1)}-1_{(\mathcal{W}\otimes\mathcal{W})^{\sim}})\| 
& =\| w_1^*x u_{g}^{(1)}v_{g}^{(1)}(\alpha_g\otimes\gamma_g)(w_1)- w_1^*xw_1\| \\
& =\| x z_{g}^{(1)}-xw_1(\alpha_g\otimes\gamma_g)(w_1^*)\| 
< \frac{\varepsilon_1}{2}<\varepsilon_1 
\end{align*}
for any $x\in F_1$ and $g\in \Gamma$. 

Step 3. Choose a finite self-adjoint subset $F_2$ of $\mathcal{W}\otimes \mathcal{W}$ such that 
$$
\{x_2\}\cup G_1\cup \psi_1 (G_1) \cup \bigcup_{g\in \Gamma}
(\alpha_g\otimes\gamma_g) (F_1)\tilde{v}_g^{(1)}\subset F_2
$$
and 
$$
\{\tilde{v}_g^{(1)}\; |\; g\in \Gamma\}\subset \{x+\lambda 1_{(\mathcal{W}\otimes\mathcal{W})^{\sim}}\; |\; 
x\in F_2, \lambda\in\mathbb{C}\}.  
$$
Applying Corollary \ref{cor:uniqueness} to $\alpha\otimes \gamma$,  
$F_2$, $\Gamma$ and $\varepsilon_2/2$, 
we obtain $F^{\prime}_2$ and $\delta_3>0$. We may assume 
$$
\delta_3 < \min\left\{ \dfrac{\delta_2}{1+\displaystyle{\max_{x\in 
G_1\cup G_1^{\prime}}\| x\|} }, \;\varepsilon_2
\right\}. \eqno{(5.6.9)}
$$
Choose a finite subset $F_2^{\prime\prime}$ of $\mathcal{W}\otimes \mathcal{W}$ such that 
$$
F_2\cup F_2^{\prime}\cup \bigcup_{g\in\Gamma} G_1^{\prime}v_g^{(1)}\subset F_2^{\prime\prime}
$$
and
$$
\{w_1\}\cup\{v_g^{(1)}\; |\; g\in \Gamma\}\subset \{x+\lambda 1_{(\mathcal{W}\otimes\mathcal{W})^{\sim}}\; |\; 
x\in F_2^{\prime\prime}, \lambda\in\mathbb{C}\}.  \eqno{(5.6.10)}
$$
By Corollary \ref{cor:existence} (i), there exists  a map $u^{(2)}$ from 
$\Gamma$ to $U((\mathcal{W}\otimes\mathcal{W})^{\sim})$ such that 
$$
\| (\alpha_g\otimes \gamma_g )(x) - \mathrm{Ad}(u^{(2)}_g)\circ 
(\alpha_g\otimes \mathrm{id}_{\mathcal{W}})(x) 
\|< \frac{\delta_3}{2},  \eqno{(5.6.11)}
$$
$$
\| x(u_{gh}^{(2)}- u_g^{(2)}(\alpha_g\otimes \mathrm{id}_{\mathcal{W}})(u_h^{(2)}))\| < 
\frac{\delta_3}{2}
$$
and 
$$
\| \pi_{\tau_{\mathcal{W}\otimes\mathcal{W}}}(u_k^{(2)}) - U_k\|_2 < \frac{\delta_3}{2} 
$$
for any $x\in F_2^{\prime\prime}$, $g,h\in \Gamma$ and $k\in N(\tilde{\gamma})$. 
For any $g\in \Gamma$, 
put $z_g^{(2)}:=v_g^{(1)} u_g^{(2)}\in U((\mathcal{W}\otimes\mathcal{W})^{\sim})$. 
By similar arguments as in Step 2, we see that  
$(\mathrm{id}_{\mathcal{W}\otimes \mathcal{W}}, z^{(2)})$ is a proper 
$(\Gamma, G_1^{\prime}, \delta_2)$-approximate cocycle morphism from 
$(\mathcal{W}\otimes\mathcal{W}, \alpha\otimes\mathrm{id}_{\mathcal{W}})$ to 
$(\mathcal{W}\otimes\mathcal{W}, \alpha\otimes\mathrm{id}_{\mathcal{W}})$ such that   
$$
\| \pi_{\tau_{\mathcal{W}\otimes \mathcal{W}}}(z_k^{(2)})-1_{\pi_{\tau_{\mathcal{W}\otimes\mathcal{W}}}
(\mathcal{W}\otimes\mathcal{W})^{''}} \|_2 < \delta_2
$$
for any $k\in N(\tilde{\gamma})$. 
Hence 
there exists a unitary element $w_2$ in 
$(\mathcal{W}\otimes\mathcal{W})^{\sim}$ such that 
$$
\| x -w_2xw_2^* \| < \frac{\varepsilon_1}{2} \quad \text{and} \quad 
\|x(z_g^{(2)}-w_2(\alpha_g\otimes\mathrm{id}_{\mathcal{W}}) (w_2^*)) \| < \frac{\varepsilon_1}{2}
$$
for any $x\in G_1$ and $g\in \Gamma$ by Corollary \ref{cor:uniqueness}. Put 
$$
\varphi_2:= \mathrm{Ad}(w_2^*w_1)\quad \text{and} 
\quad
\tilde{u}^{(2)}_g:= w_2^*w_1u_g^{(2)}
(\alpha_g\otimes\mathrm{id}_{\mathcal{W}})(w_1^*w_2)
$$
for any $g\in \Gamma$. 
Then $(\varphi_2, \tilde{u}^{(2)})$ is a proper 
$(\Gamma, F_2, \varepsilon_2)$-approximate cocycle morphism from 
$(\mathcal{W}\otimes\mathcal{W}, \alpha\otimes\gamma)$ to 
$(\mathcal{W}\otimes\mathcal{W}, \alpha\otimes\mathrm{id}_{\mathcal{W}})$ 
such that 
\begin{align*}
\| \varphi_2 \circ \psi_1 (x) - x \|= \| w_2^*xw_2 -x\| < \varepsilon_1
\end{align*}
for any $x\in G_1$. Furthermore, we have 
\begin{align*}
\|  \varphi_2\circ \psi_1(x) (\varphi_2(\tilde{v}_g^{(1)}&)\tilde{u}_g^{(2)}-1_{(\mathcal{W}\otimes\mathcal{W})^{\sim}})\| \\ 
& = \| w_2^*xv_g^{(1)}(\alpha_g\otimes \gamma_g)(w_1)u_g^{(2)}
(\alpha_g\otimes\mathrm{id}_{\mathcal{W}})(w_1^*w_2)- w_2^*xw_2 \| \\
(5.6.10), (5.6.11)\; & < \| w_2^*xv_g^{(1)}u_g^{(2)}
(\alpha_g\otimes\mathrm{id}_{\mathcal{W}})(w_2)-w_2^*xw_2\| 
+\frac{\|x\|  \delta_3}{2} \\
(5.6.9)\; & < \| xz_g^{(2)}-xw_2(\alpha_g\otimes\mathrm{id}_{\mathcal{W}})(w_2^*)\|+\frac{\delta_2}{2} \\
& < \frac{\varepsilon_1}{2}+\frac{\delta_2}{2}<\frac{\varepsilon_1}{2}+\frac{\delta_1}{2}<\varepsilon_1
\end{align*}
for any $x\in G_1$ and $g\in \Gamma$. 

Step 4. Choose a finite self-adjoint subset $G_2$ of $\mathcal{W}\otimes \mathcal{W}$ such that 
$$
F_2\cup \varphi_2 (F_2) \cup \bigcup_{g\in \Gamma}
(\alpha_g\otimes\mathrm{id}_{\mathcal{W}}) (G_1)\tilde{u}_g^{(2)}\subset G_2
$$
and 
$$
\{\tilde{u}_g^{(2)}\; |\; g\in \Gamma\}\subset \{x+\lambda 1_{(\mathcal{W}\otimes\mathcal{W})^{\sim}}\; |\; x\in G_2, \lambda\in\mathbb{C}\}.  
$$
By a similar way as in Step 2, we obtain $G_1^{\prime}$ and $\delta_4>0$ satisfying 
$$
\delta_4 < \dfrac{\delta_3}{1+\displaystyle{\max_{x\in F_2\cup F_2^{\prime}}\| x\|} }.
$$
Choose a finite subset $G_2^{\prime\prime}$ of $\mathcal{W}\otimes \mathcal{W}$ such that 
$$
G_2\cup G_2^{\prime}\cup \bigcup_{g\in\Gamma} F_2^{\prime}u_g^{(2)}\subset G_2^{\prime\prime}
$$
and
$$
\{w_1^*w_2 \}\cup\{u_g^{(2)}\; |\; g\in \Gamma\}\subset \{x+\lambda 1_{(\mathcal{W}\otimes\mathcal{W})^{\sim}}\; |\; 
x\in G_2^{\prime\prime}, \lambda\in\mathbb{C}\}.  
$$
Similar arguments as in Step 2 show that there exist a map $v^{(2)}$ from 
$\Gamma$ to $U((\mathcal{W}\otimes\mathcal{W})^{\sim})$ and a unitary
element $w_3$ in $(\mathcal{W}\otimes\mathcal{W})^{\sim}$ satisfying suitable properties. 
Put 
$$
\psi_2:= \mathrm{Ad}(w_3^*w_1^*w_2)\quad \text{and} 
\quad
\tilde{v}^{(2)}_g:= w_3^*w_1^*w_2v_g^{(2)}
(\alpha_g\otimes\mathrm{id}_{\mathcal{W}})(w_3^*w_1^*w_2)
$$
for any $g\in \Gamma$. 
Similar arguments as in Step 3 imply that $(\psi_2, \tilde{v}^{(2)})$ is a proper 
$(\Gamma, G_2, \varepsilon_2)$-approximate cocycle morphism from 
$(\mathcal{W}\otimes\mathcal{W}, \alpha\otimes\mathrm{id}_{\mathcal{W}})$ to 
$(\mathcal{W}\otimes\mathcal{W}, \alpha\otimes\gamma)$ 
such that 
$$
\| \psi_2\circ \varphi_2(x) - x \| < \frac{\varepsilon_2}{2}< \varepsilon_2
$$
and 
$$
\|  \psi_2\circ \varphi_2(x) (\psi_2(\tilde{u}_g^{(2)})\tilde{v}_g^{(2)}-1_{(\mathcal{W}\otimes\mathcal{W})^{\sim}})\|< \frac{\varepsilon_2}{2}
+\frac{\delta_3}{2}<\varepsilon_2
$$
for any $x\in F_2$ and $g\in \Gamma$. 

Repeating this process, we obtain increasing sequences of finite self-adjoint subsets 
$\{F_n\}_{n=1}^{\infty}$ and $\{G_n\}_{n=1}^{\infty}$ of $\mathcal{W}\otimes\mathcal{W}$ and 
sequences of proper approximate cocycle morphisms $\{(\varphi_n, \tilde{u}^{(n)})\}_{n=1}^{\infty}$ 
and $\{(\psi_n, \tilde{v}^{(n)})\}_{n=1}^{\infty}$ satisfying the assumption of 
\cite[Proposition 5.2]{Na5}. Consequently, $\alpha\otimes \gamma$ is cocycle conjugate to 
$\alpha\otimes \mathrm{id}_{\mathcal{W}}$ by \cite[Proposition 5.2]{Na5}.
\end{proof}

If $\alpha$ is a $\mathcal{W}$-absorbing action, then $\alpha$ is cocycle conjugate to 
$\alpha\otimes\mathrm{id}_{\mathcal{W}}$. Hence we obtain the following corollary by the theorem above 
and Theorem \ref{thm:property-W}. 

\begin{cor}\label{cor;absorption}
Let $\alpha$ and $\gamma$ be outer actions of a finite group $\Gamma$ on $\mathcal{W}$. 
Assume that $\alpha$ is $\mathcal{W}$-absorbing. 
If the characteristic invariant of $\tilde{\gamma}$ is trivial and $N(\tilde{\alpha})\subseteq N(\tilde{\gamma})$, 
then $\alpha\otimes \gamma$ is cocycle conjugate to $\alpha$.
\end{cor}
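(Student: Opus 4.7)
The plan is to reduce the corollary to Theorem \ref{thm;absorption} by replacing $\gamma$ with a $\mathcal{W}$-absorbing action that has the same invariants, thereby ensuring the extra property W hypotheses of Theorem \ref{thm;absorption} hold automatically via Theorem \ref{thm:property-W}.

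Fix an isomorphism $\mathcal{W}\cong\mathcal{W}\otimes\mathcal{W}$ (using the self-absorption of $\mathcal{W}$) and let $\gamma'$ denote the action on $\mathcal{W}$ that corresponds to $\gamma\otimes\mathrm{id}_{\mathcal{W}}$ on $\mathcal{W}\otimes\mathcal{W}$ under this identification. Then $\gamma'$ is by construction an outer $\mathcal{W}$-absorbing action; moreover $N(\widetilde{\gamma'})=N(\tilde{\gamma})$ and the characteristic invariant of $\widetilde{\gamma'}$ agrees with that of $\tilde{\gamma}$ --- if $V$ is a map implementing $\tilde{\gamma}|_{N(\tilde{\gamma})}$, then $V\otimes 1$ implements the corresponding restriction of $\widetilde{\gamma\otimes\mathrm{id}_{\mathcal{W}}}$ with identical characteristic cocycle data. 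Consequently, the characteristic invariant of $\widetilde{\gamma'}$ is trivial and $N(\tilde{\alpha})\subseteq N(\widetilde{\gamma'})$. By Theorem \ref{thm:property-W}(1), $\gamma'$ has property W, and applying the same theorem to each of the outer $\mathcal{W}$-absorbing restrictions $\gamma'|_{C(g)}$ yields property W for every centralizer restriction.

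Next, I use that $\alpha$ is $\mathcal{W}$-absorbing to identify $\alpha$ with $\alpha\otimes\mathrm{id}_{\mathcal{W}}$ up to cocycle conjugacy. Tensoring with $\gamma$, rearranging tensor factors, and applying the chosen self-absorption identification yield a cocycle conjugacy between $\alpha\otimes\gamma$ and $\alpha\otimes\gamma'$. The action $\alpha\otimes\gamma'$ is again $\mathcal{W}$-absorbing, hence has property W by Theorem \ref{thm:property-W}(1). All hypotheses of Theorem \ref{thm;absorption} are now in force for the pair $(\alpha,\gamma')$, so the theorem produces a cocycle conjugacy between $\alpha\otimes\gamma'$ and $\alpha\otimes\mathrm{id}_{\mathcal{W}}$. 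Finally, $\alpha\otimes\mathrm{id}_{\mathcal{W}}$ is cocycle conjugate to $\alpha$ by $\mathcal{W}$-absorption of $\alpha$, and composing the three cocycle conjugacies completes the proof.

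The main obstacle will be the verifications surrounding the replacement of $\gamma$ by $\gamma'$: specifically, checking outerness of $\gamma\otimes\mathrm{id}_{\mathcal{W}}$, and establishing the invariance of the pair $(N(\tilde{\cdot}),\Lambda(\tilde{\cdot}))$ under tensoring with $\mathrm{id}_{\mathcal{W}}$. Both are standard consequences of the factor property of $\pi_{\tau_{\mathcal{W}}}(\mathcal{W})^{''}$ together with the commutant computation $(1\otimes R)'\cap(R\bar{\otimes}R)=R\bar{\otimes}1$ in the injective II$_1$ factor $R$, but they form the only nontrivial work beyond the invocations of Theorem \ref{thm;absorption} and Theorem \ref{thm:property-W}.
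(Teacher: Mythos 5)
Your proposal is correct and takes essentially the same route as the paper: the paper also obtains the corollary by replacing $\gamma$ with the $\mathcal{W}$-absorbing action $\gamma\otimes\mathrm{id}_{\mathcal{W}}$ (via $\alpha$ being cocycle conjugate to $\alpha\otimes\mathrm{id}_{\mathcal{W}}$) so that Theorem \ref{thm:property-W}(1) supplies all the property W hypotheses needed to invoke Theorem \ref{thm;absorption}. The only difference is that you make explicit the routine verifications (outerness of $\gamma\otimes\mathrm{id}_{\mathcal{W}}$ and invariance of $(N(\tilde{\cdot}),\Lambda(\tilde{\cdot}))$ under tensoring with $\mathrm{id}_{\mathcal{W}}$) that the paper leaves implicit.
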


\section{Classification}\label{sec:cla}

In this section we shall completely classify outer 
$\mathcal{W}$-absorbing actions of finite groups on $\mathcal{W}$ up to conjugacy and 
cocycle conjugacy. 

The following theorem is one of the main theorems in this paper. 

\begin{thm}\label{thm:main-1}
Let $\alpha$ and $\beta$ be outer $\mathcal{W}$-absorbing actions of a finite group 
$\Gamma$ on 
$\mathcal{W}$. Then $\alpha$ is cocycle conjugate to $\beta$ if and only if 
$(N(\tilde{\alpha}), \Lambda (\tilde{\alpha}))=(N(\tilde{\beta}), \Lambda (\tilde{\beta}))$. 
\end{thm}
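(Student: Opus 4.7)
The only-if direction is immediate: any cocycle conjugacy between $\alpha$ and $\beta$ descends to one between $\tilde{\alpha}$ and $\tilde{\beta}$ on $\pi_{\tau_\mathcal{W}}(\mathcal{W})''$, and both $N(\tilde{\cdot})$ and $\Lambda(\tilde{\cdot})$ are invariants of cocycle conjugacy, as discussed in Section \ref{sec:pre}.

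For the if direction, set $N := N(\tilde{\alpha})$ and $\Lambda := \Lambda(\tilde{\alpha}) \in \Lambda(\Gamma, N)$. Since $\Lambda(\Gamma, N)$ is an abelian group, $\Lambda$ admits an inverse $\Lambda^{-1}$, and I would introduce the two auxiliary model actions
\[
\sigma := S^{(\Gamma, N, \Lambda)} \otimes \mathrm{id}_\mathcal{W}, \qquad \sigma' := S^{(\Gamma, N, \Lambda^{-1})} \otimes \mathrm{id}_\mathcal{W},
\]
regarded as outer $\mathcal{W}$-absorbing actions on $\mathcal{W}$ via the classification of simple separable nuclear monotracial $\mathcal{Z}$-stable stably projectionless $C^*$-algebras with trivial $K$-theory, which identifies each of $A_{(\Gamma, N, \Lambda)} \otimes \mathcal{W}$ and $A_{(\Gamma, N, \Lambda^{-1})} \otimes \mathcal{W}$ with $\mathcal{W}$ (Proposition \ref{model:cocycle-conjugacy} supplies the factors $A_{(\Gamma, N, \cdot)}$). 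By construction these have invariants $(N, \Lambda)$ and $(N, \Lambda^{-1})$, respectively. The goal is to show $\alpha$ is cocycle conjugate to $\sigma$; by symmetry the same will hold for $\beta$, giving the conclusion.

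The argument then proceeds by repeated application of Corollary \ref{cor;absorption}. From the easily verified multiplicativity of the characteristic invariant under tensor products (implementing unitaries multiply as $V_h \otimes W_h$, so $\lambda, \mu$ multiply), both $\alpha \otimes \sigma'$ and $\sigma \otimes \sigma'$ are outer $\mathcal{W}$-absorbing actions on $\mathcal{W}$ with normal subgroup $N$ and trivial characteristic invariant. Invoking Corollary \ref{cor;absorption} twice --- first with $\alpha \otimes \sigma'$ as the $\mathcal{W}$-absorbing action and $\sigma \otimes \sigma'$ as the action being absorbed, then with their roles swapped --- produces cocycle conjugacies of $(\alpha \otimes \sigma') \otimes (\sigma \otimes \sigma')$ with both $\alpha \otimes \sigma'$ and $\sigma \otimes \sigma'$, so $\alpha \otimes \sigma'$ is cocycle conjugate to $\sigma \otimes \sigma'$. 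Tensoring both sides by $\sigma$ (which preserves cocycle conjugacy) yields $\alpha \otimes (\sigma' \otimes \sigma)$ cocycle conjugate to $\sigma \otimes (\sigma' \otimes \sigma)$; since $\sigma' \otimes \sigma$ is outer with trivial characteristic invariant and $N(\widetilde{\sigma' \otimes \sigma}) = N$, two further applications of Corollary \ref{cor;absorption} absorb it separately into $\alpha$ and into $\sigma$, producing $\alpha$ cocycle conjugate to $\sigma$.

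The main technical point requiring care will be outerness of the various tensor-product actions appearing above --- particularly $\sigma$, $\sigma'$, and $\sigma \otimes \sigma'$ regarded as actions on $\mathcal{W}$ --- so that the hypotheses of Corollary \ref{cor;absorption} are met; this is essentially bookkeeping from the explicit constructions in Proposition \ref{model:cocycle-conjugacy}, using that the implementing unitaries $V_h$ for the model actions lie in the trace completion but not in the multiplier algebras, so that tensoring with $\mathrm{id}_\mathcal{W}$ preserves $N(\cdot) = \{\iota\}$ at the $C^*$-level. The substantive content of the proof is Corollary \ref{cor;absorption} itself, the monotracial analog of equivariant Kirchberg--Phillips absorption established in Section \ref{sec:abs} via Szab\'o's approximate cocycle intertwining argument together with the approximate representability results of Section \ref{sec:app}; once it is in hand, the classification up to cocycle conjugacy follows from the tensor-product manipulations sketched above with essentially no further analysis.
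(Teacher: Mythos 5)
Your proof is correct and rests on exactly the same ingredients as the paper's: the model actions of Proposition \ref{model:cocycle-conjugacy} with inverse characteristic invariant $\Lambda^{-1}$, used to trivialize the characteristic invariant of a tensor product, followed by the absorption result of Section \ref{sec:abs}. The only difference is organizational: the paper chooses its auxiliary action $\gamma:=S^{(\Gamma,N,\Lambda^{-1})}\otimes\beta\otimes\mathrm{id}_{\mathcal{W}}$ so that it already contains $\beta$ as a tensor factor, whence a single chain $\alpha\sim_{c.c.}\alpha\otimes\gamma=\gamma'\otimes\beta\otimes\mathrm{id}_{\mathcal{W}}\sim_{c.c.}\beta$ suffices, whereas you route both $\alpha$ and $\beta$ through the standard model $\sigma=S^{(\Gamma,N,\Lambda)}\otimes\mathrm{id}_{\mathcal{W}}$ via a mutual-absorption step and then cancel $\sigma'$ by a further tensoring with $\sigma$; your version is slightly longer but yields as a byproduct that every outer $\mathcal{W}$-absorbing action with invariant $(N,\Lambda)$ is cocycle conjugate to the model action, which the paper only records later (Corollary \ref{cor:model-conj}) at the level of conjugacy.
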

\begin{proof}
The only if part is obvious. We shall show the if part. 
Since $\alpha$ and $\beta$ are $\mathcal{W}$-absorbing actions, $\alpha$ and $\beta$ are 
cocycle conjugate to $\alpha\otimes\mathrm{id}_{\mathcal{W}}$ and 
$\beta\otimes\mathrm{id}_{\mathcal{W}}$, respectively. 
Define actions $\gamma$ and $\gamma^{\prime}$ of $\Gamma$ on $\mathcal{W}$ by 
$$
\gamma:= 
S^{(\Gamma, N(\tilde{\alpha}), \Lambda(\tilde{\alpha})^{-1})}\otimes \beta
\otimes\mathrm{id}_{\mathcal{W}} \quad \text{on} \quad
A_{(\Gamma, N(\tilde{\alpha}), \Lambda(\tilde{\alpha})^{-1})}
\otimes \mathcal{W} \otimes  \mathcal{W}\cong \mathcal{W}
$$ 
and 
$$
\gamma^{\prime}:= 
\alpha \otimes\mathrm{id}_{\mathcal{W}}\otimes S^{(\Gamma, N(\tilde{\alpha}),
\Lambda(\tilde{\alpha})^{-1})}\quad \text{on} \quad  
\mathcal{W}\otimes \mathcal{W}\otimes 
A_{(\Gamma, N(\tilde{\alpha}), \Lambda(\tilde{\alpha})^{-1})}\cong \mathcal{W}
$$ 
where $S^{(\Gamma, N(\tilde{\alpha}), \Lambda(\tilde{\alpha})^{-1})}$ is the outer action of 
$\Gamma$ on the simple unital monotracial AF algebra 
$A_{(\Gamma, N(\tilde{\alpha}), \Lambda(\tilde{\alpha})^{-1})}$
constructed in 
Proposition \ref{model:cocycle-conjugacy}. 
Then $\alpha\otimes\mathrm{id}_{\mathcal{W}}\otimes \gamma$ is equal to 
$\gamma^{\prime}\otimes \beta\otimes \mathrm{id}_{\mathcal{W}}$. 
It is easy to see that 
$\gamma$ and $\gamma^{\prime}$ are outer actions of $\Gamma$ on $\mathcal{W}$ such that 
the characteristic invariants of $\tilde{\gamma}$ and $\tilde{\gamma}^{\prime}$ 
are trivial and $N(\tilde{\gamma})=N(\tilde{\gamma}^{\prime})=N(\tilde{\alpha})
=N(\tilde{\beta})$. 
Furthermore, $\alpha\otimes \gamma$, 
$\gamma^{\prime}\otimes \beta$, $\gamma|_{C(g)}$
and $\gamma^{\prime}|_{C(g)}$ have property W for any $g\in \Gamma$ by Theorem 
\ref{thm:property-W}. 
Therefore we have 
\begin{align*}
\alpha
& \sim_{c.c} \alpha\otimes\mathrm{id}_{\mathcal{W}} \sim_{c.c} \alpha\otimes \gamma 
\sim_{c.c}
\alpha\otimes\mathrm{id}_{\mathcal{W}}\otimes \gamma \\
&= \gamma^{\prime}\otimes \beta \otimes 
\mathrm{id}_{\mathcal{W}} \sim_{c.c} \gamma^{\prime}\otimes\beta 
\sim_{c.c} \mathrm{id}_{\mathcal{W}}\otimes \beta \sim_{c.c} \beta 
\end{align*}
by Theorem \ref{thm;absorption} where the notation $\alpha \sim_{c.c} \beta$ means that $\alpha$ is 
cocycle conjugate to $\beta$. 
\end{proof}

The following corollary is an immediate consequence of the theorem above. 

\begin{cor}\label{cor:main-1}
Let $\alpha$ and $\beta$ be outer $\mathcal{W}$-absorbing actions of a finite group 
$\Gamma$ on 
$\mathcal{W}$. Then $\alpha$ is cocycle conjugate to $\beta$ if and only if 
$\tilde{\alpha}$ is cocycle conjugate to $\tilde{\beta}$. 
\end{cor}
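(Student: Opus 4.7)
The plan is to reduce the corollary to Theorem \ref{thm:main-1} by showing that cocycle conjugacy of the induced II$_1$ factor actions $\tilde{\alpha}$ and $\tilde{\beta}$ already forces the pair of invariants $(N(\tilde{\cdot}), \Lambda(\tilde{\cdot}))$ to coincide; once that is verified, Theorem \ref{thm:main-1} closes the argument.

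The ``only if'' direction uses the general observation recalled in Subsection 2.1: any cocycle conjugacy at the C$^*$-level descends to a cocycle conjugacy on the GNS completions, since a trace-preserving isomorphism $\theta \colon \mathcal{W} \to \mathcal{W}$ extends to an isomorphism between the completions $\pi_{\tau_{\mathcal{W}}}(\mathcal{W})''$, and any $\beta$-cocycle lifts through $\pi_{\tau_{\mathcal{W}}}$ to a $\tilde{\beta}$-cocycle. Hence $\alpha \sim_{c.c} \beta$ immediately yields $\tilde{\alpha} \sim_{c.c} \tilde{\beta}$.

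For the ``if'' direction, I would invoke the Jones theory reviewed in Subsection 2.2, where both $N(\delta)$ and the characteristic invariant $\Lambda(\delta)$ are stated to be cocycle conjugacy invariants of any action $\delta$ of a finite group on a II$_1$ factor. Applied to $\tilde{\alpha}$ and $\tilde{\beta}$, this gives $(N(\tilde{\alpha}), \Lambda(\tilde{\alpha})) = (N(\tilde{\beta}), \Lambda(\tilde{\beta}))$, and Theorem \ref{thm:main-1} then produces a cocycle conjugacy between $\alpha$ and $\beta$. No genuine obstacle is expected; this is a packaging corollary whose content is that the classifying invariant of Theorem \ref{thm:main-1} is already determined by the cocycle conjugacy class of $\tilde{\alpha}$ alone, so rigidity under passage from the C$^*$-algebra to the injective II$_1$ factor completion is perfect for $\mathcal{W}$-absorbing actions.
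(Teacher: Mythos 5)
Your proposal is correct and matches the paper's intended argument: the paper presents this corollary as an immediate consequence of Theorem \ref{thm:main-1}, with the ``only if'' direction following from the general fact (recalled in Subsection 2.1) that cocycle conjugacy descends to the GNS completions, and the ``if'' direction from the fact (recalled in Subsection 2.2) that $N(\cdot)$ and the characteristic invariant $\Lambda(\cdot)$ are cocycle conjugacy invariants of finite group actions on II$_1$ factors. Nothing is missing.
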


We shall consider the classification up to conjugacy. 
The following lemma is an application of the construction of model actions in the classification up to 
cocycle conjugacy. 

\begin{lem}\label{lem:conjugacy}
Let $\alpha$ be an outer $\mathcal{W}$-absorbing action of a finite group $\Gamma$ on 
$\mathcal{W}$, and let $U$ be a map from $N(\tilde{\alpha})$ to 
$U(\pi_{\tau_{\mathcal{W}}}(\mathcal{W})^{''})$ such that 
$\tilde{\alpha}_h=\mathrm{Ad}(U_h)$ for any $h\in N(\tilde{\alpha})$. 
If $\eta$ is an element in $H^1(N(\tilde{\alpha}))^{\Gamma}$, 
then there exists an automorphism $\theta$ of 
$\mathcal{W}\otimes\mathbb{K}(\ell^2(\Gamma))$ 
such that 
$$
\theta \circ (\alpha_g\otimes \mathrm{Ad}(\rho_g)) =
(\alpha_g\otimes \mathrm{Ad}(\rho_g))\circ \theta \quad  
\text{and} 
\quad
\tilde{\theta}(U_h\otimes \rho_h)=\eta (h)(U_h\otimes \rho_h)
$$
for any $g\in \Gamma$ and $h\in N(\tilde{\alpha})$ where 
$\rho$ is the right regular representation of $\Gamma$ on 
$\ell^2(\Gamma)$. 
\end{lem}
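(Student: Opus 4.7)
The plan is to reduce the statement to Corollary \ref{cor:1.5.9} by transporting it through a conjugacy between $\alpha\otimes \mathrm{Ad}(\rho)$ and the model action $\alpha':= S^{(\Gamma, N(\tilde{\alpha}), \Lambda(\tilde{\alpha}))}\otimes \mathrm{id}_{\mathcal{W}}\otimes \mathrm{Ad}(\rho)$ on $C:= A_{(\Gamma, N(\tilde{\alpha}), \Lambda(\tilde{\alpha}))}\otimes \mathcal{W}\otimes \mathbb{K}(\ell^2(\Gamma))$. Set $N:=N(\tilde{\alpha})$ and $\Lambda:=\Lambda(\tilde{\alpha})$ for brevity.

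First, observe that $S^{(\Gamma, N, \Lambda)}\otimes \mathrm{id}_{\mathcal{W}}$ on $A_{(\Gamma, N, \Lambda)}\otimes \mathcal{W}\cong \mathcal{W}$ is an outer $\mathcal{W}$-absorbing action with $(N(\widetilde{S^{(\Gamma, N, \Lambda)}\otimes \mathrm{id}_{\mathcal{W}}}), \Lambda(\widetilde{S^{(\Gamma, N, \Lambda)}\otimes \mathrm{id}_{\mathcal{W}}}))=(N, \Lambda)$. By Theorem \ref{thm:main-1}, $\alpha$ is cocycle conjugate to $S^{(\Gamma, N, \Lambda)}\otimes \mathrm{id}_{\mathcal{W}}$. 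By the remark in Section \ref{sec:pre} that cocycle conjugate actions become conjugate after tensoring with $\mathrm{Ad}(\rho)$, we obtain an isomorphism $\Theta$ from $\mathcal{W}\otimes \mathbb{K}(\ell^2(\Gamma))$ onto $C$ such that $\Theta\circ (\alpha_g\otimes \mathrm{Ad}(\rho_g))=\alpha_g'\circ \Theta$ for all $g\in \Gamma$. Since both algebras are simple and monotracial, $\Theta$ is automatically trace-preserving and induces an isomorphism $\tilde{\Theta}$ on the GNS weak closures.

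Next, set $W_h:= \tilde{\Theta}(U_h\otimes \rho_h)\in U(\pi_{\tau_C}(C)^{''})$. Transporting the relation $\tilde{\alpha}_h\otimes \mathrm{Ad}(\rho_h)=\mathrm{Ad}(U_h\otimes \rho_h)$ through $\tilde{\Theta}$ shows $\tilde{\alpha}_h'=\mathrm{Ad}(W_h)$ for every $h\in N$. Apply Corollary \ref{cor:1.5.9} with this choice of $W$ to obtain an automorphism $\theta'$ of $C$ satisfying $\theta'\circ \alpha_g'=\alpha_g'\circ \theta'$ and $\tilde{\theta'}(W_h)=\eta(h)W_h$. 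Then $\theta:= \Theta^{-1}\circ \theta'\circ \Theta$ is an automorphism of $\mathcal{W}\otimes \mathbb{K}(\ell^2(\Gamma))$ and a direct computation using $\Theta\circ(\alpha_g\otimes \mathrm{Ad}(\rho_g))=\alpha_g'\circ\Theta$ yields $\theta\circ (\alpha_g\otimes \mathrm{Ad}(\rho_g))=(\alpha_g\otimes \mathrm{Ad}(\rho_g))\circ \theta$, while $\tilde{\theta}(U_h\otimes \rho_h)=\tilde{\Theta}^{-1}(\tilde{\theta'}(W_h))=\eta(h)(U_h\otimes \rho_h)$.

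The main subtlety is making sure that Corollary \ref{cor:1.5.9}, whose statement begins by choosing an arbitrary unitary implementer $W$ of $\tilde{\alpha}'|_{N}$, really applies to our transported $W_h$ rather than only to the specific unitaries $V_h\otimes \pi_{\tau_B}(\lambda_h^{\gamma})\otimes 1\otimes \rho_h$ used in its proof; this is fine because the proof only uses that $\pi_{\tau_C}(C)^{''}$ is a factor, so any two unitary implementers of $\tilde{\alpha}_h'$ differ by a scalar in $\mathbb{T}$, and the conclusion $\tilde{\theta'}(W_h)=\eta(h)W_h$ is invariant under such scalar multiplications. Once this is checked the argument is essentially a change of coordinates and involves no further technical input.
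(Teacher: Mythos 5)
Your proposal is correct and follows essentially the same route as the paper: invoke Theorem \ref{thm:main-1} to get cocycle conjugacy with the model action $S^{(\Gamma, N(\tilde{\alpha}), \Lambda(\tilde{\alpha}))}\otimes\mathrm{id}_{\mathcal{W}}$, pass to conjugacy after tensoring with $\mathrm{Ad}(\rho)$, transport $U_h\otimes\rho_h$ to $W_h$ on $C$, apply Corollary \ref{cor:1.5.9}, and conjugate back. Your extra check that Corollary \ref{cor:1.5.9} applies to an arbitrary implementer $W$ (not just the specific one built in its proof) is exactly how the corollary is stated and proved, so nothing further is needed.
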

\begin{proof}
By Theorem \ref{thm:main-1}, $\alpha$ is cocycle conjugate to 
$S^{(\Gamma, N(\tilde{\alpha}), \Lambda(\tilde{\alpha}))}\otimes \mathrm{id}_{\mathcal{W}}$. 
Put 
$$
C:=A_{(\Gamma, N(\tilde{\alpha}),\Lambda(\tilde{\alpha}))}\otimes \mathcal{W}\otimes
\mathbb{K}(\ell^2(\Gamma))
\quad \text{and} \quad  
\alpha^{\prime}:= S^{(\Gamma, N(\tilde{\alpha}), \Lambda(\tilde{\alpha}))}\otimes
\mathrm{id}_{\mathcal{W}}\otimes\mathrm{Ad}(\rho).
$$ 
Since $\alpha\otimes \mathrm{Ad}(\rho)$ is conjugate to 
$\alpha^{\prime}$, 
there exist an isomorphism $\varphi$ from $\mathcal{W}\otimes\mathbb{K}(\ell^2(\Gamma))$  
onto $C$ such that  
$
\varphi \circ (\alpha_g\otimes \mathrm{Ad}(\rho_g))= 
\alpha^{\prime}_g\circ \varphi
$
for any $g\in \Gamma$. 
Define a map $W$ from $N(\tilde{\alpha})$ to 
$U(\pi_{\tau_{C}}(C)^{''})$ by 
$$
W_h:= \tilde{\varphi}(U_h\otimes \rho_h)
$$
for any $h\in N(\tilde{\alpha})$ where $\tilde{\varphi}$ is the induced isomorphism from 
$\pi_{\tau_{\mathcal{W}\otimes\mathbb{K}(\ell^2(\Gamma))}}
(\mathcal{W}\otimes\mathbb{K}(\ell^2(\Gamma)))^{''}$ onto $\pi_{\tau_{C}}(C)^{''}$ by $\varphi$. 
Then we have $\tilde{\alpha}^{\prime}_h=\mathrm{Ad}(W_h)$ for any $h\in N(\tilde{\alpha})$. 
Hence Corollary \ref{cor:1.5.9} implies that there exists an automorphism $\beta$ of 
$C$ such that 
$$
\beta \circ \alpha^{\prime}_g=\alpha^{\prime}_g\circ \beta \quad  
\text{and} 
\quad
\tilde{\beta}(W_h)=\eta (h)W_h
$$
for any $g\in \Gamma$ and $h\in N(\tilde{\alpha})$. 
Put $\theta:= \varphi^{-1}\circ \beta \circ \varphi$. Then $\theta$ has the desired property. 
\end{proof}

The following theorem is one of the main theorems in this paper. 

\begin{thm}\label{thm:main-2}
Let $\alpha$ and $\beta$ be outer $\mathcal{W}$-absorbing actions of a finite group 
$\Gamma$ on 
$\mathcal{W}$. Then $\alpha$ is conjugate to $\beta$ if and only if 
$(N(\tilde{\alpha}), \Lambda (\tilde{\alpha}), i(\tilde{\alpha}))
=(N(\tilde{\beta}), \Lambda (\tilde{\beta}), i(\tilde{\beta}))$. 
\end{thm}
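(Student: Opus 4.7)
The only-if direction follows immediately from the invariance of $(N, \Lambda, i)$ under conjugacy. For the converse, Theorem \ref{thm:main-1} produces an isomorphism $\theta \colon \mathcal{W} \to \mathcal{W}$ and a $\beta$-cocycle $u$ with $\theta \circ \alpha_g = \mathrm{Ad}(u_g) \circ \beta_g \circ \theta$. Setting $\alpha' := \theta \circ \alpha \circ \theta^{-1} = \mathrm{Ad}(u) \circ \beta$, which is conjugate to $\alpha$, the task reduces to exhibiting a conjugacy $\alpha' \sim \beta$. The plan is to show that, after replacing $\theta$ by $\psi \circ \theta$ for a suitable automorphism $\psi$ of $\mathcal{W}$, the $\beta$-cocycle can be made a coboundary; Proposition \ref{pro:Jones-3.1.2} then yields the conjugacy.

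Fix a map $V \colon N \to U(\pi_\tau(\mathcal{W})'')$ with $\tilde{\beta}_h = \mathrm{Ad}(V_h)$ on $N := N(\tilde{\beta}) = N(\tilde{\alpha})$, and set $V'_h := \pi_\tau(u_h) V_h$; then $V'$ implements $\tilde{\alpha}'|_N$. A routine cocycle calculation using $u_{gh} = u_g \beta_g(u_h)$ gives $(\lambda_{\widetilde{\alpha'}}, \mu_{\widetilde{\alpha'}}) = (\lambda_{\tilde{\beta}}, \mu_{\tilde{\beta}})$ as elements of $Z(\Gamma, N)$, so the measures $m_V(\tilde{\beta})$ and $m_{V'}(\widetilde{\alpha'})$ both live on the same $P := P_{\lambda_{\tilde{\beta}}, \mu_{\tilde{\beta}}}$. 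The assumption $i(\tilde{\alpha}) = i(\tilde{\beta})$ supplies $\eta \in H^1(N)^{\Gamma}$ with $m_{V'}(\widetilde{\alpha'}) = m_V(\tilde{\beta}) \circ \partial_\eta$, and via formula $(2.2.1)$ this translates to the tracial identity
$$\tau_p(e_u) \;=\; \tau_{\partial_\eta(p)}(e_\beta) \qquad (p \in P),$$
where $\{\tau_p\}_{p \in P}$ are the extremal traces on $\mathcal{W} \rtimes_\beta \Gamma$ from Proposition \ref{pro:tracial-state}.

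To absorb the discrepancy $\eta$, apply Lemma \ref{lem:conjugacy} to $\beta$ and $\bar{\eta}$: there is an automorphism $\sigma$ of $\mathcal{W} \otimes \mathbb{K}(\ell^2(\Gamma))$ commuting with $\beta \otimes \mathrm{Ad}(\rho)$ and satisfying $\tilde{\sigma}(V_h \otimes \rho_h) = \bar{\eta}(h)(V_h \otimes \rho_h)$ for $h \in N$. Under the identification $(\mathcal{W} \otimes \mathbb{K}(\ell^2(\Gamma)))^{\beta \otimes \mathrm{Ad}(\rho)} \cong \mathcal{W} \rtimes_\beta \Gamma$ from Section \ref{sec:pre}, $\sigma$ restricts to an automorphism of $\mathcal{W} \rtimes_\beta \Gamma$ whose action on the minimal central projections (indexed by $P$) is the permutation induced by $\partial_{\bar{\eta}}$. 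Now extend the cocycle conjugacy $(\theta, u)$ to a conjugacy $\Theta$ between $\alpha \otimes \mathrm{Ad}(\rho)$ and $\beta \otimes \mathrm{Ad}(\rho)$ on $\mathcal{W} \otimes \mathbb{K}(\ell^2(\Gamma))$ via the Imai-Takai-type formula of Section \ref{sec:pre}, compose with $\sigma$, and descend back to $\mathcal{W}$ using the matrix absorption $\mathcal{W} \otimes M_{|\Gamma|}(\mathbb{C}) \cong \mathcal{W}$ together with the $\mathcal{W}$-absorption of $\beta$. This extracts a modified intertwining datum $(\theta^{\mathrm{new}}, u^{\mathrm{new}})$ of $\alpha$ and $\beta$ on $\mathcal{W}$ whose cocycle satisfies $\tau_p(e_{u^{\mathrm{new}}}) = \tau_p(e_\beta)$ for every $p \in P$.

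Since $\mathcal{W} \rtimes_\beta \Gamma$ is simple, separable, exact, stably projectionless, and $\mathcal{Z}$-stable (as in the proof of Proposition \ref{pro:coboundary}), projections in its multiplier algebra are classified by their tracial values via \cite[Corollary 2.10]{Na0}. Hence $e_{u^{\mathrm{new}}}$ and $e_\beta$ are Murray-von Neumann equivalent in $M(\mathcal{W} \rtimes_\beta \Gamma)$, so Proposition \ref{pro:Jones-3.1.2} shows that $u^{\mathrm{new}}$ is a coboundary, and the desired conjugacy $\alpha \sim \beta$ follows. The principal technical difficulty is the descent step of the previous paragraph: realizing the central-permutation effect of $\sigma$ on $\mathcal{W} \rtimes_\beta \Gamma$ as a concrete modification of the intertwining $(\theta, u)$ at the $\mathcal{W}$-level requires careful coordination between the isomorphism of Section \ref{sec:pre}, the identification $\mathcal{W} \otimes M_{|\Gamma|}(\mathbb{C}) \cong \mathcal{W}$, and the $\mathcal{W}$-absorption hypothesis on both actions.
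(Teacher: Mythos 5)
Your overall strategy --- reduce to showing a $\beta$-cocycle is a coboundary via the tracial criterion, use Theorem \ref{thm:main-1} as input, and use Lemma \ref{lem:conjugacy} to absorb the $H^1(N)^{\Gamma}$-discrepancy coming from the inner invariant --- is the same as the paper's, and the only-if direction and the final coboundary argument are fine. But the step you yourself flag as ``the principal technical difficulty'' is a genuine gap, and the specific route you propose for it would fail. You want to pass to $\mathcal{W}\otimes\mathbb{K}(\ell^2(\Gamma))$ with the actions $\alpha\otimes\mathrm{Ad}(\rho)$ and $\beta\otimes\mathrm{Ad}(\rho)$, compose with $\sigma$, and then ``descend back to $\mathcal{W}$ using the matrix absorption $\mathcal{W}\otimes M_{|\Gamma|}(\mathbb{C})\cong\mathcal{W}$ together with the $\mathcal{W}$-absorption of $\beta$.'' The obstruction is that $\beta\otimes\mathrm{Ad}(\rho)$ is only \emph{cocycle} conjugate to $\beta\otimes\mathrm{id}_{\mathbb{K}(\ell^2(\Gamma))}$, not conjugate to it: for $h\in N(\tilde{\beta})\setminus\{\iota\}$ one has $\tilde{\tau}(V_h\otimes\rho_h)=\tilde{\tau}(V_h)\,\tau(\rho_h)=0$, whereas $\tilde{\tau}(V_h\otimes 1)=\tilde{\tau}(V_h)$, so the inner invariants of $\beta\otimes\mathrm{Ad}(\rho)$ and $\beta$ differ in general. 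Any ``descent'' that identifies $\beta\otimes\mathrm{Ad}(\rho)$ with $\beta$ must therefore pass through a non-trivial cocycle perturbation, and at that moment you lose control of exactly the tracial data $\tau_p(e_{u^{\mathrm{new}}})$ that your endgame requires. So the claimed identity $\tau_p(e_{u^{\mathrm{new}}})=\tau_p(e_\beta)$ is not established.

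The paper's resolution is Jones' device from \cite[Theorem 1.4.8]{Jones}, which you do not invoke: one never descends from $\mathrm{Ad}(\rho)$ at all. Instead, having produced (via Theorem \ref{thm:main-1} and Lemma \ref{lem:conjugacy}) a genuine conjugacy $\theta$ of $\alpha\otimes\mathrm{Ad}(\rho)$ with $\beta\otimes\mathrm{Ad}(\rho)$ satisfying $\tilde{\theta}(V_h\otimes\rho_h)=W_h\otimes\rho_h$ exactly (no phase), one defines $u_g:=\theta(1\otimes\rho_g^*)(1\otimes\rho_g)$ and checks that this is a $\beta\otimes\mathrm{id}_{\mathbb{K}(\ell^2(\Gamma))}$-cocycle with $\theta\circ(\alpha_g\otimes\mathrm{id})=\mathrm{Ad}(u_g)\circ(\beta_g\otimes\mathrm{id})\circ\theta$. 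The trace computation and Proposition \ref{pro:coboundary} then show $u$ is a coboundary, giving $\alpha\otimes\mathrm{id}_{\mathbb{K}(\ell^2(\Gamma))}$ conjugate to $\beta\otimes\mathrm{id}_{\mathbb{K}(\ell^2(\Gamma))}$; and tensoring with $\mathrm{id}_{\mathbb{K}(\ell^2(\Gamma))}$ (unlike with $\mathrm{Ad}(\rho)$) is harmless, since Proposition \ref{pro:def-w-absorbing} together with $\mathcal{W}\cong\mathcal{W}\otimes\mathbb{K}(\ell^2(\Gamma))$ shows $\alpha\cong\alpha\otimes\mathrm{id}_{\mathbb{K}(\ell^2(\Gamma))}$ for $\mathcal{W}$-absorbing $\alpha$. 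A second, smaller divergence: the paper first applies \cite[Proposition 1.4.6]{Jones} to choose $V$ and $W_h=UV_hU^*$ with matching traces \emph{before} invoking Theorem \ref{thm:main-1}, so that the exponent $\eta$ to be cancelled by Lemma \ref{lem:conjugacy} is read off from $\tilde{\theta}_1(V_h\otimes\rho_h)=\eta(h)(W_h\otimes\rho_h)$ rather than from a comparison of measures up to $\partial_\eta$ as in your write-up; your version of this bookkeeping is workable but only once the descent issue is repaired as above.
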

\begin{proof}
The only if part is obvious. We shall show the if part. It is enough to show that 
$\alpha\otimes \mathrm{id}_{\mathbb{K}(\ell^2(\Gamma))}$ is conjugate to 
$\beta\otimes \mathrm{id}_{\mathbb{K}(\ell^2(\Gamma))}$ 
because outer $\mathcal{W}$-absorbing actions $\alpha$ and $\beta$ are conjugate to 
$\alpha\otimes \mathrm{id}_{\mathbb{K}(\ell^2(\Gamma))}$ and
$\beta\otimes \mathrm{id}_{\mathbb{K}(\ell^2(\Gamma))}$, respectively  
by Proposition \ref{pro:def-w-absorbing} and the fact that 
$\mathcal{W}$ is isomorphic to $\mathcal{W}\otimes \mathbb{K}(\ell^2(\Gamma))$. 

Choose a pair 
$(\lambda, \mu)\in Z(\Gamma, N(\tilde{\alpha}))= Z(\Gamma, N(\tilde{\beta}))$ such that 
$[(\lambda, \mu)]=\Lambda(\tilde{\alpha})=\Lambda(\tilde{\beta})$.
Since we have $i(\tilde{\alpha})=i(\tilde{\beta})$, 
\cite[Proposition 1.4.6]{Jones} implies that there exist a map $V$ from $N(\tilde{\alpha})$ to 
$U(\pi_{\tau_{\mathcal{W}}}(\mathcal{W})^{''})$ and a unitary element $U$ in 
$\pi_{\tau_{\mathcal{W}}}(\mathcal{W})^{''}$ such that 
$$
\mathrm{Ad}(V_h)=\tilde{\alpha}_h, \quad \mathrm{Ad}(UV_hU^*)=\tilde{\beta}_h, \quad 
\tilde{\alpha}_g(V_{g^{-1}hg})=\lambda (g, h)V_{h}
$$
and 
$$
\tilde{\beta}_g(UV_{g^{-1}hg}U^*)= \lambda (g, h)UV_{h}U^*
$$
for any $g\in \Gamma$ and $h\in N(\tilde{\alpha})$. 
Put 
$$
W_h:= UV_hU^*
$$ 
for any $h\in N(\tilde{\beta})$. 

Theorem \ref{thm:main-1} implies that 
$\alpha\otimes \mathrm{Ad}(\rho)$ is conjugate to $\beta\otimes \mathrm{Ad}(\rho)$ 
where $\rho$ is the right regular representation of $\Gamma$ on 
$\ell^2(\Gamma)$, that is, 
there exists an automorphism $\theta_1$ of $\mathcal{W}\otimes \mathbb{K}(\ell^2(\Gamma))$ 
such that $\theta_1\circ (\alpha_g\otimes 
\mathrm{Ad}(\rho_g))= (\beta_g\otimes \mathrm{Ad}(\rho_g))\circ \theta_1$ for any 
$g\in \Gamma$. It can be easily checked that there exists an element 
$\eta$ in $H^1(N(\tilde{\alpha}))^{\Gamma}$ such that 
$$
\tilde{\theta}_1(V_h\otimes \rho_h)= \eta (h) (W_h\otimes \rho_h)
$$
for any $h\in N(\tilde{\alpha})$. By Lemma \ref{lem:conjugacy}, there exists an automorphism $\theta_2$ 
of  $\mathcal{W}\otimes \mathbb{K}(\ell^2(\Gamma))$
$$
\theta_2 \circ (\beta_g\otimes \mathrm{Ad}(\rho_g)) =
(\beta_g\otimes \mathrm{Ad}(\rho_g))\circ \theta_2 \quad  
\text{and} 
\quad
\tilde{\theta}_2(W_h\otimes \rho_h)=\overline{\eta (h)}(W_h\otimes \rho_h)
$$
for any $g\in \Gamma$ and $h\in N(\tilde{\beta})$. (Note that if $\eta\in H^1(N(\tilde{\alpha}))^{\Gamma}$, 
then $\overline{\eta}\in H^1(N(\tilde{\alpha}))^{\Gamma}$.)
 Put $\theta:= \theta_2\circ \theta_1$. 
Then $\theta$ is an automorphism of $\mathcal{W}\otimes \mathbb{K}(\ell^2(\Gamma))$ such that 
$$
\theta\circ (\alpha_g\otimes \mathrm{Ad}(\rho_g))= (\beta_g\otimes 
\mathrm{Ad}(\rho_g))\circ \theta
\quad 
\text{and}
\quad
\tilde{\theta}(V_h\otimes \rho_h)= W_h\otimes \rho_h
$$
for any $g\in \Gamma$ and $h\in N(\tilde{\alpha})$. 

Define a map $u$ from $\Gamma$ to $U(M(\mathcal{W}\otimes \mathbb{K}(\ell^2(\Gamma))))$ by 
$$
u_g:= \theta (1_{M(\mathcal{W})}\otimes \rho_g^*)(1_{M(\mathcal{W})}\otimes \rho_g)
$$
for any $g\in \Gamma$. By the same computations as in the proof of 
\cite[Theorem 1.4.8]{Jones}, we see that $u$ is a 
$\beta\otimes \mathrm{id}_{\mathbb{K}(\ell^2(\Gamma))}$-cocycle such that 
$$
\theta \circ (\alpha_g\otimes \mathrm{id}_{\mathbb{K}(\ell^2(\Gamma))})
= \mathrm{Ad}(u_g)\circ (\beta_g\otimes \mathrm{id}_{\mathbb{K}(\ell^2(\Gamma))})\circ \theta
$$
for any $g\in \Gamma$. In particular, $\alpha\otimes \mathrm{id}_{\mathbb{K}(\ell^2(\Gamma))}$ 
is conjugate to $\mathrm{Ad}(u)\circ (\beta\otimes \mathrm{id}_{\mathbb{K}(\ell^2(\Gamma))})$. 
Since we have 
\begin{align*}
&\tilde{\tau}_{\mathcal{W}\otimes \mathbb{K}(\ell^2(\Gamma))}(
\pi_{\tau_{\mathcal{W}\otimes \mathbb{K}(\ell^2(\Gamma))}}(u_h)(W_h\otimes 1_{\mathbb{K}(\ell^2(\Gamma))})) 
\\
&= \tilde{\tau}_{\mathcal{W}\otimes \mathbb{K}(\ell^2(\Gamma))}(\tilde{\theta}
(1_{M(\mathcal{W})}\otimes \rho_h^*)(W_h\otimes \rho_h)) \\
&= \tilde{\tau}_{\mathcal{W}\otimes \mathbb{K}(\ell^2(\Gamma))}(\tilde{\theta}
(V_h\otimes 1_{\mathbb{K}(\ell^2(\Gamma))})) \\
&=  \tilde{\tau}_{\mathcal{W}\otimes \mathbb{K}(\ell^2(\Gamma))}(V_h\otimes 
1_{\mathbb{K}(\ell^2(\Gamma))}) \\
&= \tilde{\tau}_{\mathcal{W}\otimes \mathbb{K}(\ell^2(\Gamma))}
((U\otimes 1_{\mathbb{K}(\ell^2(\Gamma))})
(V_h\otimes 1_{\mathbb{K}(\ell^2(\Gamma))})(U\otimes 1_{\mathbb{K}(\ell^2(\Gamma))})^*) \\
&=\tilde{\tau}_{\mathcal{W}\otimes \mathbb{K}(\ell^2(\Gamma))}(W_h\otimes 1_{\mathbb{K}(\ell^2(\Gamma))}) 
\end{align*}
for any $h\in N(\tilde{\beta})$, Proposition \ref{pro:coboundary} implies that 
$\mathrm{Ad}(u)\circ (\beta\otimes \mathrm{id}_{\mathbb{K}(\ell^2(\Gamma))})$ is 
conjugate to $\beta\otimes \mathrm{id}_{\mathbb{K}(\ell^2(\Gamma))}$. 
Consequently, $\alpha\otimes \mathrm{id}_{\mathbb{K}(\ell^2(\Gamma))}$ is conjugate to 
$\beta\otimes \mathrm{id}_{\mathbb{K}(\ell^2(\Gamma))}$. 
\end{proof}

The following corollaries are immediate consequences of the theorem above (and Theorem 
\ref{thm:model} for Corollary \ref{cor:model-conj}). 

\begin{cor}\label{cor:main-2}
Let $\alpha$ and $\beta$ be outer $\mathcal{W}$-absorbing actions of a finite group 
$\Gamma$ on 
$\mathcal{W}$. Then $\alpha$ is conjugate to $\beta$ if and only if 
$\tilde{\alpha}$ is conjugate to $\tilde{\beta}$. 
\end{cor}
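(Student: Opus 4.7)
The plan is to derive this as an immediate consequence of Theorem \ref{thm:main-2}, so the work is essentially bookkeeping about which data are conjugacy invariants.

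For the only if direction: if $\alpha$ is conjugate to $\beta$, then any isomorphism $\theta\colon \mathcal{W}\to\mathcal{W}$ intertwining the two actions induces (via the GNS representation with respect to the unique trace $\tau_{\mathcal{W}}$) an isomorphism $\tilde{\theta}\colon \pi_{\tau_{\mathcal{W}}}(\mathcal{W})''\to\pi_{\tau_{\mathcal{W}}}(\mathcal{W})''$ intertwining $\tilde{\alpha}$ and $\tilde{\beta}$. This is recorded in the Preliminaries (the paragraph after the definition of cocycle conjugacy).

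For the if direction: assume $\tilde{\alpha}$ is conjugate to $\tilde{\beta}$. Because $N(\,\cdot\,)$, $\Lambda(\,\cdot\,)$, and $i(\,\cdot\,)$ are all conjugacy invariants of an action of a finite group on a II$_1$ factor (this is reviewed in the Preliminaries, following Jones), we get
\[
(N(\tilde{\alpha}), \Lambda(\tilde{\alpha}), i(\tilde{\alpha})) = (N(\tilde{\beta}), \Lambda(\tilde{\beta}), i(\tilde{\beta})).
\]
Theorem \ref{thm:main-2} then yields that $\alpha$ is conjugate to $\beta$, which finishes the proof.

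There is no real obstacle here; the content is entirely packaged into Theorem \ref{thm:main-2}, and this corollary simply repackages the statement by noting that equality of the triples is equivalent to conjugacy of the induced II$_1$-factor actions (one direction by Jones' classification, the other by the fact that each component is a conjugacy invariant). The only thing worth being explicit about is that we are appealing to Jones' classification only implicitly, through the conjugacy-invariance of each of $N$, $\Lambda$, $i$; we do not need the full classification theorem to conclude that conjugacy of $\tilde{\alpha}$ and $\tilde{\beta}$ implies equality of the triples.
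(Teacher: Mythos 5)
Your proposal is correct and matches the paper's intended argument: the paper simply declares this corollary an immediate consequence of Theorem \ref{thm:main-2}, and the bookkeeping you supply (conjugacy of $\alpha$ and $\beta$ passes to $\tilde{\alpha}$ and $\tilde{\beta}$; conversely, conjugacy of $\tilde{\alpha}$ and $\tilde{\beta}$ forces equality of the triples since $N$, $\Lambda$, $i$ are each conjugacy invariants, whence Theorem \ref{thm:main-2} applies) is exactly the right filling-in. Your closing remark that Jones' full classification is not needed for this direction is also accurate.
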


\begin{cor}\label{cor:model-conj}
Let $\alpha$ be an outer $\mathcal{W}$-absorbing actions of a finite group 
$\Gamma$ on $\mathcal{W}$. Then there exists a probability measure $m$ with full support on 
$P_{\lambda_{\tilde{\alpha}}, \mu_{\tilde{\alpha}}}$ such that $\alpha$ is conjugate to  
$\alpha^{(\Gamma, N(\tilde{\alpha}), \Lambda(\tilde{\alpha}), m)}\otimes \mathrm{id}_{\mathcal{W}}$ 
on $A_{\Gamma, N(\tilde{\alpha}), \Lambda(\tilde{\alpha}), m}\otimes \mathcal{W}$.  
\end{cor}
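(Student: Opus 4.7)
The plan is to reduce this to Theorem \ref{thm:main-2}. First I would choose a probability measure $m$ on $P_{\lambda_{\tilde{\alpha}}, \mu_{\tilde{\alpha}}}$ representing the inner invariant, so that $i(\tilde{\alpha})=[m]$. By the proposition immediately preceding Lemma \ref{lem:model}, any such representative of the inner invariant of an outer action on a simple separable monotracial C$^*$-algebra has full support, so $m$ is automatically a full-support measure. Applying Theorem \ref{thm:model} to the triple $(N(\tilde{\alpha}), \Lambda(\tilde{\alpha}), m)$ yields the simple unital monotracial AF algebra $A_{\Gamma, N(\tilde{\alpha}), \Lambda(\tilde{\alpha}), m}$ together with the outer action $\alpha^{(\Gamma, N(\tilde{\alpha}), \Lambda(\tilde{\alpha}), m)}$ realizing these invariants.

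Next I would form the candidate action
\[
\beta := \alpha^{(\Gamma, N(\tilde{\alpha}), \Lambda(\tilde{\alpha}), m)}\otimes \mathrm{id}_{\mathcal{W}}
\]
on $A_{\Gamma, N(\tilde{\alpha}), \Lambda(\tilde{\alpha}), m}\otimes \mathcal{W}$. This tensor product is simple, separable, nuclear, monotracial, $\mathcal{Z}$-stable (inherited from the $\mathcal{W}$-factor), and $KK$-equivalent to $\{0\}$ (since $\mathcal{W}$ is); hence by the classification characterizing $\mathcal{W}$ it is isomorphic to $\mathcal{W}$. By construction $\beta$ is $\mathcal{W}$-absorbing, and its outerness follows from the outerness of $\alpha^{(\Gamma, N(\tilde{\alpha}), \Lambda(\tilde{\alpha}), m)}$ given by Theorem \ref{thm:model}.

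The core verification is that $\tilde{\beta}$ has the same Jones invariants as $\tilde{\alpha}$. At the von~Neumann level,
\[
\tilde{\beta}_g = \tilde{\alpha}^{(\Gamma, N(\tilde{\alpha}), \Lambda(\tilde{\alpha}), m)}_g \otimes \mathrm{id}_{\pi_{\tau_{\mathcal{W}}}(\mathcal{W})''},
\]
and if $V$ is a map from $N(\tilde{\alpha})$ into the unitaries of $\pi_{\tau_{A}}(A)''$ implementing the inner part of the model action, then $V_h \otimes 1$ implements $\tilde{\beta}_h$ with identical $(\lambda, \mu)$-cocycle data, so $N(\tilde{\beta}) = N(\tilde{\alpha})$ and $\Lambda(\tilde{\beta}) = \Lambda(\tilde{\alpha})$. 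For the inner invariant, the set $P_{\lambda, \mu}$ is intrinsic to $(\lambda,\mu)$, the trace on the tensor product factor evaluated on $\Phi_{V\otimes 1}(p)$ equals the original $\tilde{\tau}_A(\Phi_V(p))$, and thus $i(\tilde{\beta}) = [m] = i(\tilde{\alpha})$. Invoking Theorem \ref{thm:main-2} then gives that $\alpha$ is conjugate to $\beta$, which is the conclusion.

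The main obstacle is the bookkeeping step showing that the invariants $(N(\tilde{\cdot}), \Lambda(\tilde{\cdot}), i(\tilde{\cdot}))$ are preserved under tensoring a II$_1$-factor action with the trivial action on a second copy of the injective II$_1$ factor; everything else is a direct citation of previously established results. Since $\pi_{\tau_A}(A)''$ and $\pi_{\tau_A}(A)''\,\bar{\otimes}\, \pi_{\tau_{\mathcal{W}}}(\mathcal{W})''$ are both copies of the injective II$_1$ factor and the implementing unitaries and traces transport across the natural inclusion, this verification is essentially automatic.
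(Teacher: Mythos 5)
Your proposal is correct and follows exactly the route the paper intends: the paper dispatches this corollary as an immediate consequence of Theorem \ref{thm:main-2} together with Theorem \ref{thm:model} (using the full-support proposition to choose $m$), which is precisely the argument you spell out. The bookkeeping you flag at the end --- that $(N,\Lambda,i)$ pass unchanged through $\tilde{\gamma}\mapsto\tilde{\gamma}\otimes\mathrm{id}$ on the tensor product with the injective II$_1$ factor, via the implementing unitaries $V_h\otimes 1$ and multiplicativity of the trace --- is indeed the only detail the paper leaves implicit, and your treatment of it is fine.
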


\section*{Acknowledgments}
The author would like to thank Yoshikazu Katayama for a helpful discussion about the proof of 
Theorem \ref{thm:main-1}.

\end{document}